\documentclass[12pt]{amsart}
\usepackage{amsmath,amsfonts,amsthm,amscd,stmaryrd, amssymb,mathrsfs}
\usepackage[all,cmtip]{xy}
\usepackage{enumitem}
\usepackage{hyperref}
\usepackage{tikz, tikz-cd}
\newtheorem{theorem}{Theorem}[section]

\newtheorem{proposition}[theorem]{Proposition}
\newtheorem{lemma}[theorem]{Lemma}
\newtheorem{corollary}[theorem]{Corollary}
\theoremstyle{definition}
\newtheorem{definition}[theorem]{Definition}
\newtheorem{remark}[theorem]{Remark}

\newtheorem{assumption}[theorem]{Assumption}

\newcommand{\RR}{\mathbb{R}}

\newcommand{\Coker}{{\rm{Coker}}}

\renewcommand{\hat}{\widehat}
\renewcommand{\tilde}{\widetilde}

\DeclareMathOperator{\Ker}{Ker}

\usepackage[letterpaper]{geometry}
\numberwithin{equation}{section}
\begin{document}
\title[Collapsing CSC metrics]{Collapsing constant scalar curvature metrics}
\author{Jeff Viaclovsky}\address{Department of Mathematics, University of California, Irvine, USA}\email{jviaclov@uci.edu}
\author{Xiaokang Wang}\address{Department of Mathematics, University of California, Irvine, USA}\email{xiaokangw@uci.edu}
\date{July 29, 2026}
\begin{abstract} 
We prove that a sequence of constant scalar curvature (CSC) metrics which is collapsing with bounded curvature to a manifold $(X,g_{\infty})$ can be perturbed to a sequence of $\mathcal{N}$-invariant collapsing CSC metrics, under a natural assumption involving the eigenvalues of the drift Laplacian on $(X,g_{\infty})$.  This answers a special case of a question of Cheeger-Fukaya-Gromov. We also give some natural conditions on the limiting metric-measure space under which the eigenvalue assumption is automatically satisfied. 
\end{abstract}
\maketitle

\section{Introduction}
The setting for this paper is a sequence $\{(M^n_i ,g_i)\}$ of closed $n$-dimensional Riemannian manifolds
which limit in the Gromov-Hausdorff sense to a limit space $(X,g_{\infty})$.  If $\dim(X) = n$, the sequence is called \textit{non-collapsing},
but if $\dim(X) = d < n$, the sequence is said to be \textit{collapsing}. In this case, when the sequence $g_i$ has uniformly bounded sectional curvature, the definitive theory is the Cheeger-Fukaya-Gromov (CFG) theory \cite{CFG}, which was developed through the foundational works of Cheeger, Fukaya, and Gromov in the late 1980s and early 1990s. In this paper, we focus on the case when the limiting metric space is a lower dimensional manifold.
\begin{assumption}\label{assumption: collapsing CSC intro}
    Let $\{(M^n_i ,g_i)\}$ be a sequence of closed $n$-dimensional Riemannian manifolds such that:
\begin{itemize}
    \item The Riemannian curvature is uniformly bounded: \begin{align}
        \Vert\operatorname{Rm}(g_i)\Vert_{C^{0}} \leq C_0,
\end{align}
    \item The diameter is bounded from above: $d(M_i, g_i)\leq C$.
    \item The sequence collapses to a lower-dimensional manifold, i.e.:
    \begin{align}
        (M^n_i,g_i)\to (X^d, g_\infty)
    \end{align}
    in the Gromov-Hausdorff sense to a manifold $X$, where $d<n$. 

\end{itemize}
\end{assumption}
We will let $\tau_i$ denote the Gromov-Hausdorff distance of $ (M^n_i,g_i)$ and $(X^d, g_\infty)$.
Under these assumptions, Cheeger-Fukaya-Gromov proved that there is an $\mathcal{N}$-structure, defined as follows.
\begin{definition}
    Let $M^n$, $X^d$ be two smooth manifolds. Let $F:M\to X$ be a smooth fibration. We say $F$ is an \textit{infranil-fibration} if for each fiber over $x\in X$, $F^{-1}(x)$ is diffeomorphic to an infranilmanifold (See Definition~\ref{d:infranil}). We say $M$ admits a \textit{$\mathcal{N}$-structure} if there is a cover of $X$, $\{U_j\}$, where $U_j$ is simply connected so that the local universal cover $\tilde{F^{-1}(U_j)}$ admits a nilpotent $\mathcal{N}$-action such that on overlaps the actions are conjugate by an affine isomorphism of $\mathcal{N}$ (see Definition~\ref{d:nil_action}).
\end{definition}
Furthermore, Cheeger-Fukaya-Gromov proved the sequence can be perturbed to an $\mathcal{N}$-invariant sequence $(M_i, \hat{g}_i)$ (see Definition~\ref{d:invariant} below). In this paper, we give a detailed analysis of the Cheeger-Fukaya-Gromov construction under the above assumptions. The original proof of Cheeger-Fukaya-Gromov involves a Ricci-flow smoothing step, but this step changes the limit space. We incorporate some modern theory (see for example \cite{NaberZhang,SunZhang}), and prove the following finite regularity estimate on the invariant metric.
\begin{theorem}\label{t.invariant_nearby_metric_estimate_intro}
Let $\{(M_i,g_i)\}$ and $(X,g_\infty)$ satisfy the Assumption~\ref{assumption: collapsing CSC intro}. 
Then, for $i$ large enough, there exists an infranil-fibration 
$F_i: M_i \rightarrow X$ and a $\mathcal{N}$-invariant metric $\hat{g}_i$ on $M_i$ so that, for every $U\subseteq X$, the pull-back metric on $\tilde{F_i^{-1}(U)}$, still denoted as $\hat{g}_i$ and $g_i$, satisfies the estimate:
\begin{align}
    \Vert \hat{g}_i - g_i\Vert_{C^{0,\alpha}}\leq C(n,C_0,X)\cdot\tau_i
\end{align}
for any $0<\alpha<1$, and
\begin{align}
    \Vert \hat{g}_i - g_i\Vert_{W^{1,p}_{loc}(K)}\leq C(n,C_{0,p},X,K)\cdot \tau_i
\end{align}
for $p > n$, and for any compact set $K\subseteq \tilde{F_i^{-1}(U)}$. 
\end{theorem}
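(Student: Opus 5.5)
The plan is to run the Cheeger--Fukaya--Gromov construction with the Ricci-flow smoothing step removed, and to work throughout in harmonic (or rough-harmonic) coordinates on local universal covers. That is where the bound $|\mathrm{Rm}|\le C_0$ gives $C^{1,\alpha}\cap W^{2,p}$ control of the metric, rather than $C^\infty$ control.

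\textbf{Step 1 (local geometry).} Fix a cover $\{U_j\}$ of $X$ by small, simply connected balls, with radius depending only on $n$, $C_0$ and the geometry of $X$. Pull $g_i$ back via the exponential map (or pass to the local universal cover $\tilde{F_i^{-1}(U_j)}$) to obtain a non-collapsed sequence. By the curvature bound there are harmonic coordinates of uniform size in which $\|g_i\|_{C^{1,\alpha}}+\|g_i\|_{W^{2,p}}\le C(n,C_0,p)$. On these covers the local fundamental pseudo-groups converge to a nilpotent Lie group $\mathcal N$ acting isometrically on the limit. For $i$ large, the Margulis/Gromov almost-flat theory of \cite{CFG} gives an approximate $\mathcal N$-action on the cover whose generators move points by $O(\tau_i)$, together with the infranil fibration $F_i$. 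Here I would use \cite{NaberZhang,SunZhang} to realize $F_i$ as an almost-Riemannian submersion built from harmonic-type almost splitting maps. This makes $F_i$ a $C^{1,\alpha}$-approximation of the limit projection with error $O(\tau_i)$.

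\textbf{Step 2 (averaging).} Define $\hat g_i$ by averaging $g_i$ over the compact group generated on the fibers (a torus or infranil quotient), using the center-of-mass construction of Grove--Karcher on each chart and patching with a partition of unity pulled back from $X$. Since the pieces are averages of $\mathcal N$-invariant objects, the result is $\mathcal N$-invariant, and conjugation by affine isomorphisms on overlaps preserves invariance. The estimate then follows from
\begin{align}
\hat g_i - g_i = \int_{G}\big(\phi^*g_i - g_i\big)\,d\mu(\phi),
\end{align}
combined with the bound $\|\phi^* g_i - g_i\|\lesssim d_{C^{1,\alpha}}(\phi,\mathrm{id})\cdot\|g_i\|_{C^{1,\alpha}}$. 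The displacement of $\phi$ in $C^{1,\alpha}$ is $O(\tau_i)$, because $\phi$ is an almost-isometry with small displacement and, in harmonic coordinates, isometries are controlled in $C^{2,\alpha}$ by their $C^0$ displacement. The $C^{0,\alpha}$ bound on $\hat g_i-g_i$ then follows. Differentiating once and using $g_i\in W^{2,p}$ gives the $W^{1,p}_{loc}(K)$ bound, with constants depending on $K$ through the number of charts needed to cover it.

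\textbf{Step 3 (expected main obstacle).} The key difficulty is the quantitative claim that the approximate action and the fibration are $O(\tau_i)$-close to the exact ones in $C^{1,\alpha}$, and not merely in $C^0$. In \cite{CFG} this was obtained after smoothing; here it must be obtained directly. My first attempt would be to show that the approximate isometries $\phi$ are $C^{1,\alpha}$-close to genuine isometries by solving the harmonic map equation with boundary data $\phi$ on each chart. Elliptic estimates in $W^{2,p}$ would then convert $C^0$-closeness $O(\tau_i)$ into $C^{1,\alpha}$-closeness $O(\tau_i)$. I would then iterate the Grove--Karcher center-of-mass corrections, which converge geometrically, so the accumulated error stays linear in $\tau_i$.
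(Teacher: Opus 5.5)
There is a genuine gap, and it sits exactly at the step you flag in Step 3.

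\textbf{What matches and what fails.} Your overall structure agrees with the paper. Both use harmonic-coordinate regularity on local covers and a fibration from \cite{NaberZhang,SunZhang}. Both average $h^*g_i$ over $\Gamma\backslash\mathcal{N}$ with its invariant probability measure and bound $h^*g_i-g_i$ by the $C^{1,\alpha}$ (resp.\ $W^{2,p}$) distance of $h$ from the identity. What is missing is a valid reason why these distances are $O(\tau_i)$.

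\emph{The isometry argument does not apply.} Elements of the $\mathcal{N}$-action are not isometries of $g_i$; if they were, $g_i$ would already be invariant. On the local cover only the discrete deck group $\Gamma$ acts isometrically. So the fact that isometries with small displacement are $C^{2,\alpha}$-close to the identity says nothing about a generic $h$. A displacement of order $\tau_i$ along a fiber of diameter $\tau_i$ is, by naive scaling, compatible with $|\nabla V_h|\sim 1$.

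\emph{Step 3 does not repair this.} A harmonic replacement of $h$ is $C^{1,\alpha}$-close to the identity, but the replacements no longer form an $\mathcal{N}$-action, so averaging over them gives no invariance. Proving that $h$ is $C^{1,\alpha}$-close to its replacement requires control of the tension field of $h$, which is exactly the second-order information you lack. Comparing with the exact isometric $\mathcal{N}$-action on the limit only gives $o(1)$, since the $C^{1,\alpha}$ Cheeger--Gromov convergence of local covers has no rate in $\tau_i$. Finally, the Grove--Karcher center of mass requires a compact group: $\mathcal{N}$ is not compact, and $\Gamma\backslash\mathcal{N}$ is not a group unless the fiber is a torus.

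\textbf{The missing idea: Ruh's connection with estimates (Theorem~\ref{t.ruh}).} Rescale each fiber to diameter one and solve $d\eta+[\eta,\eta]=\operatorname{Rm}$ with $d^*\eta=0$. Elliptic estimates give a flat connection $\nabla^x$ with parallel torsion satisfying $\|\nabla^x-\nabla^{LC}\|_{C^{0,\alpha}}+\|\nabla^x-\nabla^{LC}\|_{W^{1,p}}\le C\tau_i$. The CFG $\mathcal{N}$-action is an exact action, built from the developing map of this affine structure (Theorems~\ref{t.CFG_connection_regularity} and~\ref{t.CFG_existence_group_action}). Its generators are right-invariant fields $V$, and these satisfy
\begin{align}
|\nabla V|\le |S^xV|+|A^xV|+|\nabla^xV|, \qquad \nabla^x_{W^R}V^R=[V,W]^R,
\end{align}
where $S^x=\nabla^x-\nabla^{LC}$ and $A^x$ is the fiber's second fundamental form. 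This yields $\|\nabla V\|_{C^{0,\alpha}}+\|\nabla V\|_{W^{1,p}}\le C|V(\tilde{x}_i)|\le C\tau_i$ (Corollary~\ref{c:rivfwr}). Then
\begin{align}
h^*g_i-g_i=\int_0^1\exp(tV_h)^*\mathcal{L}_{V_h}g_i\,dt
\end{align}
gives the bound linear in $\tau_i$. The linear rate comes from the fibers being affinely almost flat through Ruh's connection, not from elliptic regularity of almost-isometries.
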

We prove this Theorem in Section~\ref{s:cfg} and Section~\ref{s:estinv} essentially by tracing through every step used in Cheeger-Fukaya-Gromov's proof in \cite{CFG}, which also uses a crucial result from Ruh in \cite{ruh1982almost}.
We will use this estimate below to answer a special case of question posed by Cheeger-Fukaya-Gromov in the constant scalar curvature (CSC) case. 
Their question was: if the original sequence has some special geometric structure (such as Einstein, K\"ahler, etc.), can one find the invariant sequence having the same geometric structure? (See \cite{Huang-Rong-Wang, SunZhang} for some recent progress of CFG question in other cases.)
We find that the answer to their question in the CSC case is yes in the case that the limit is a manifold,  provided that a natural assumption on the eigenvalues of the drift Laplacian on the limit space is satisfied. To understand this condition,
recall that the limit space has a natural metric measure structure, inherited from the collapsing geometry. For this measured limit space 
$(X^d, g_{\infty}, \mu)$, the measure $\mu$ is absolutely continuous with respect to the Riemannian measure,
so there exists $\chi \in C^{1,\alpha}(X) \cap W^{2,p}(X)$ with $d\mu =  \chi dvol(g_{\infty})$. 
Our condition involves the drift Laplacian
\begin{align}
\label{d:drift}
\Delta_\mu h \equiv \Delta_X h  + \nabla \log \chi \cdot \nabla_Xh,
\end{align} 
which is the natural self-adjoint Laplacian operator with respect to $d\mu$. Our main result is the following. 
\begin{theorem}
\label{cfgconj_intro}
 Let $\{(M_i^n, g_i)\}$ be a sequence of n-dimensional closed Riemannian manifolds satisfying Assumption~\ref{assumption: collapsing CSC intro}.
Assume in addition that 
\begin{enumerate}
    \item The scalar curvature $S(g_i)\equiv v$, where $v$ is a constant;
    \item If $v > 0$,  $\frac{v}{n-1} \notin \operatorname{Spec}( -\Delta_\mu)$. 
\end{enumerate}
Then, for $i$ large, there exists smooth metrics $g'_i$ on $M_i$ such that:
\begin{itemize}
    \item $g_i$, $g_i'$ are close in $W^{1,p}$ sense, for any $p>n$, with $\Vert g_i-g_i' \Vert_{W^{1,p}}\to 0$ ;
    \item $g_i'$ is $\mathcal{N}$-invariant;
    \item $S(g_i')$ has constant scalar curvature satisfying $S(g_i') \equiv  v$  if $v \neq 0$, but $S(g_i') \equiv v_i$ if  $v = 0$, where $v_i \to 0$ as $i \to \infty$.
    \item $(M_i, g_i')\to (X, g_\infty)$ in the Gromov-Hausdorff sense, as $i \to \infty$.
\end{itemize}
\end{theorem}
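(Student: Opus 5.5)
Starting from Theorem~\ref{t.invariant_nearby_metric_estimate_intro}, we have the $\mathcal{N}$-invariant metric $\hat g_i$ with $\Vert \hat g_i - g_i\Vert_{W^{1,p}} \le C\tau_i$. We look for the desired metric in the conformal class of $\hat g_i$, namely $g_i' = u_i^{4/(n-2)}\hat g_i$, or $e^{2w_i}\hat g_i$ when $n=2$, with $u_i$ an $\mathcal{N}$-invariant function close to $1$. Because $\hat g_i$ and the conformal factor are $\mathcal{N}$-invariant, $g_i'$ is automatically $\mathcal{N}$-invariant. The CSC equation
\begin{align*}
-\tfrac{4(n-1)}{n-2}\Delta_{\hat g_i} u + S(\hat g_i)u = v\, u^{\frac{n+2}{n-2}}
\end{align*}
restricted to $\mathcal{N}$-invariant functions descends to an equation on the base. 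Locally, invariant functions are functions on $X$ (via $F_i$). For such functions, $\Delta_{\hat g_i}$ acts as a drift Laplacian $\Delta_X^{(i)} + \nabla\log\chi_i\cdot\nabla$ for the quotient metric $\hat g_{X,i}$, where $\chi_i$ is the fiber volume density. By the estimates in Theorem~\ref{t.invariant_nearby_metric_estimate_intro}, $\hat g_{X,i}\to g_\infty$ and the normalized $\chi_i\to \chi$ in $C^{0,\alpha}\cap W^{1,p}$. So the invariant linearized operators converge to the limiting operator
\begin{align*}
L = -4\tfrac{n-1}{n-2}\Delta_\mu - \tfrac{4}{n-2}v
\end{align*}
on $X$. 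This comes from linearizing at $u=1$ with $S=v$: the derivative of $v u^{(n+2)/(n-2)}$ is $\tfrac{n+2}{n-2}v$, and subtracting $S=v$ gives $\tfrac{4}{n-2}v$. Dividing by $\tfrac{4(n-1)}{n-2}$, the operator is $-\Delta_\mu - \tfrac{v}{n-1}$.

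The key steps, in order, are as follows.

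\textbf{Step 1 (initial error).} Show that $S(\hat g_i)-v\to 0$ in a weak norm suited to the problem. Since only a $W^{1,p}$ bound on $\hat g_i - g_i$ is available, $S(\hat g_i)$ is controlled only in $W^{-1,p}$. Using the divergence structure of scalar curvature, $S = \partial\Gamma + \Gamma*\Gamma$, we get $\Vert S(\hat g_i)-v\Vert_{W^{-1,p}}\le C\tau_i$. The invariant part of this error descends to $X$. We then work in spaces $W^{1,p}$ for $u$ and $W^{-1,p}$ for the right-hand side, on invariant functions. Smoothness is recovered afterwards: $\hat g_i$ can be taken smooth, and elliptic regularity then makes $u_i$ smooth.

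\textbf{Step 2 (uniform invertibility).} Prove that the invariant linearized operators $L_i: W^{1,p}_{\mathcal{N}}\to W^{-1,p}_{\mathcal{N}}$ are invertible with uniform bounds. This is the main obstacle. The plan is a contradiction and compactness argument. If $\Vert u_i\Vert_{W^{1,p}}=1$ and $\Vert L_iu_i\Vert\to 0$, view the $u_i$ as functions on $X$. Rellich compactness together with the convergence of the coefficients ($\hat g_{X,i}$ and $\chi_i$) yields a limit $u\neq 0$ with $-\Delta_\mu u = \frac{v}{n-1}u$ weakly. Nonvanishing of the limit comes from the $W^{1,p}$ estimate for $L_i$ with a lower-order term. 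Such a $u$ is excluded for $v>0$ by hypothesis (2). For $v<0$, the weighted integration by parts $\int|\nabla u|^2 d\mu = \tfrac{v}{n-1}\int u^2 d\mu$ forces $u=0$. For $v=0$, the kernel consists of the constants. In that case we allow the constant $v_i$ to vary, using the Lyapunov–Schmidt or Yamabe-type normalization: solve $-\tfrac{4(n-1)}{n-2}\Delta u+S(\hat g_i)u=v_i u^{(n+2)/(n-2)}$ with unknowns $(u,v_i)$ on the complement of the constants. This is why the conclusion only gives $v_i\to 0$.

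\textbf{Step 3 (fixed point).} Apply the contraction mapping principle (quantitative inverse function theorem) in a ball of radius $C\tau_i$ in $W^{1,p}$. Since $p>n$, $W^{1,p}\subset C^0$, so the nonlinearity is controlled. This produces $u_i$ with $\Vert u_i-1\Vert_{W^{1,p}}\le C\tau_i$, and uniqueness in the invariant class gives invariance.

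\textbf{Step 4 (conclusion).} We then have $\Vert g_i'-g_i\Vert_{W^{1,p}}\le C\tau_i\to 0$. Moreover $u_i$ is $C^0$-close to $1$, so $g_i'$ is uniformly bi-Lipschitz to $g_i$ with constant tending to $1$. Hence $(M_i,g_i')\to (X,g_\infty)$ in the Gromov–Hausdorff sense.
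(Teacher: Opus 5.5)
Your proposal is correct and follows essentially the same route as the paper. The shared steps are:
\begin{enumerate}
\item an $\mathcal{N}$-invariant conformal perturbation $(1+\phi)^{4/(n-2)}\hat g_i$ of the averaged CFG metric;
\item descent of the equation to the drift Laplacian $\Delta_{\mu_i}$ on $X$;
\item the $W^{-1,p}$ bound on $S(\hat g_i)-v$ from the divergence structure of the scalar curvature;
\item uniform injectivity of the linearization $W^{1,p}\to W^{-1,p}$ by compactness and contradiction, using mean-zero functions and a varying constant $v_i$ when $v=0$;
\item the quantitative implicit function theorem.
\end{enumerate}
Your weighted integration-by-parts argument ruling out a nonzero limit when $v<0$ spells out a step the paper leaves implicit, since the paper simply cites its eigenvalue assumption, which only covers $v>0$.
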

The proof of this theorem will be given in Section~\ref{section: main}, and is fundamentally a perturbation argument using the implicit function theorem. More precisely, we show that for $i$ sufficiently large, Cheeger-Fukaya-Gromov's invariant metric can be conformally deformed to constant scalar curvature by an invariant conformal factor; see Section~\ref{s:conformal perturbation}.   Note that the main difficulty in the proof is that the invariant metric does not necessarily have curvature tensor close to the original $g_i$, since the CSC equation does not give higher regularity for the metric. Nevertheless, we show in Section~\ref{s.scalar distribution} that using the Sobolev regularity, the scalar curvatures are close in a distributional norm. This will then be sufficient to apply the implicit function theorem to an operator mapping into a space of distributions. 

\begin{remark}
As mentioned above, the standard trick to obtain the higher regularity is a Ricci flow smoothing technique. While it might be possible to prove Theorem~\ref{cfgconj_intro} with such an approach, since this method changes the limit space, it would seem that some finite regularity considerations would still be necessary, so such an approach would be basically equivalent to our direct method. 
\end{remark}

\begin{remark} Our proof should extend without much difficulty if the limit is assumed to be a Riemannian orbifold. In \cite{CFG}, CFG considered the much more general situation of a singular collapsed limit space. It is possible that our proof could be extended to the general case, but would require more tools from analysis on singular spaces, so we will not consider it here. 
\end{remark}

We next make some remarks about the eigenvalue assumption in Theorem~\ref{cfgconj_intro}. 
In the case $v \leq 0$, there is no condition. Note that when $v = 0$, we cannot control the sign of the $v_i$, this seems to be unavoidable
since $0 \in \operatorname{Spec}( -\Delta_\mu)$ in this case. 
We let $\chi = e^{-f}$, then in terms of $f$, the drift Laplacian is 
\begin{align}
\Delta_{\mu} = \Delta_{g_{\infty}} - \langle \nabla f, \nabla \cdot \rangle.
\end{align}
We show below in Section~\ref{sec:convergence_eigenvalue} that under Assumption~\ref{assumption: collapsing CSC intro}, that $f \in W^{2,p}(X)$ for $p > n$; see Theorem~\ref{t:chiconv}.
In the case $v > 0$, we recall the following weighted Lichnerowicz-Obata estimate: if
    \begin{align}
    \label{e:ricnfintro}
 Ric(g_{\infty}) + \nabla^2 f - \frac{1}{n-d} df \otimes df \geq \frac{v}{n} g_{\infty},
\end{align}
almost everywhere, then
\begin{align}
\label{e:lamb1}
    \lambda_1( -\Delta_{\mu}) > \frac{v}{n-1};
\end{align}
see Subsection~\ref{s:eignevalue estimate} for further discussion and references. We note here the following corollary, which requires slight additional regularity of $f$.
\begin{corollary}
\label{c:lambda1} 
Let $f \in C^2(X)$. If $v > 0$, 
\begin{align}
\label{Riclower}
    Ric(g_{\infty}) \geq \Big( \frac{v}{n} + \eta \Big) g_{\infty},
\end{align}
where $\eta > 0$ is a constant, and 
\begin{align}
    \Vert \nabla^2 f \Vert_{C^0} + \frac{1}{n-d} \Vert \nabla f \Vert_{C^0}^2 \leq \eta, 
\end{align}
then 
\begin{align}
\label{e:lambda12}
     \lambda_1(- \Delta_{\mu}) > \frac{v}{n-1}.
\end{align}
\end{corollary}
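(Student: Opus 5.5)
The plan is to reduce the corollary to the weighted Lichnerowicz--Obata estimate \eqref{e:ricnfintro}$\Rightarrow$\eqref{e:lamb1} recalled just above. Since $f \in C^2(X)$, the tensor $Ric(g_{\infty}) + \nabla^2 f - \frac{1}{n-d}\, df\otimes df$ is continuous, so it suffices to check \eqref{e:ricnfintro} pointwise everywhere. Then \eqref{e:lambda12} follows directly from \eqref{e:lamb1}.

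To check \eqref{e:ricnfintro}, I will bound the two extra terms from below by $-\eta\, g_\infty$. For any point $x\in X$ and any unit vector $V \in T_xX$:
\begin{align*}
\nabla^2 f(V,V) \geq -|\nabla^2 f|(x) \geq -\Vert \nabla^2 f\Vert_{C^0},
\end{align*}
where $|\cdot|$ is the operator norm (or the Hilbert--Schmidt norm, which dominates it). Likewise,
\begin{align*}
\frac{1}{n-d}(df\otimes df)(V,V) = \frac{1}{n-d}\langle \nabla f, V\rangle^2 \leq \frac{1}{n-d}\Vert \nabla f\Vert_{C^0}^2 .
\end{align*}
Combining these with the hypothesis gives
\begin{align*}
\nabla^2 f - \frac{1}{n-d}\, df\otimes df \geq -\Big(\Vert \nabla^2 f\Vert_{C^0} + \frac{1}{n-d}\Vert\nabla f\Vert_{C^0}^2\Big) g_\infty \geq -\eta\, g_\infty .
\end{align*}
Adding \eqref{Riclower}, we obtain $Ric(g_\infty) + \nabla^2 f - \frac{1}{n-d}df\otimes df \geq \frac{v}{n} g_\infty$ everywhere. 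This is \eqref{e:ricnfintro}, and the weighted estimate then yields the strict inequality \eqref{e:lambda12}.

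There is no real obstacle; the content lies in the cited weighted Obata estimate, which we take as given from Subsection~\ref{s:eignevalue estimate}. The only points needing care are that the norms in the hypothesis are pointwise operator norms, so the quadratic-form bounds above are valid, and that the strictness in \eqref{e:lambda12} comes from the cited estimate and not from the slack in $\eta$. If the cited estimate only gave $\geq$, I would use the slack to recover strictness: with $\eta>0$, the Bakry--\'Emery curvature bound holds with constant strictly larger than $\frac{v}{n}$ on an open set, or we can decrease $\eta$ slightly. Alternatively, one can note that equality in the weighted Obata inequality forces rigidity, which is incompatible with a strict Ricci bound.
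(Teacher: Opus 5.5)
Your proposal is correct and is essentially the paper's proof: bound $\nabla^2 f - \frac{1}{n-d}\,df\otimes df \geq -\eta\, g_\infty$ pointwise, add \eqref{Riclower} to get $\operatorname{Ric}^n_f \geq \frac{v}{n} g_\infty$, and apply Lemma~\ref{l:LOestimate} with $(n-1)K = \frac{v}{n}$. Its strict conclusion (equality is excluded there since $n>d$) gives \eqref{e:lambda12}, so your closing fallback remarks on strictness are unnecessary; note also that you correctly square $\Vert \nabla f\Vert_{C^0}$, which the paper's displayed estimate omits by a typo.
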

In other words, the corollary says that if the Ricci curvature of the limit space is bounded below as in \eqref{Riclower},   and if $f$ is sufficiently close to a constant in $C^2$-norm (with an effective estimate), then the eigenvalue condition \eqref{e:lambda12} is satisfied.

\subsection{Acknowledgements}  The first author was partially supported by NSF Grant DMS-2404195. The authors are indebted to Ruobing Zhang for numerous helpful discussions about collapsing theory, and to John Lott for discussions and references regarding the Lichnerowicz-Obata estimate. 


\section{Preliminaries}
\label{s:pre}
In this Section, we recall some basic notions. 
\subsection{Function space}
We will use the following function spaces. Let $M$ be a smooth compact manifold. Choose a finite atlas of coordinate charts $\{(U_i,\phi_i)\}_{i=1}^N$. Let $\{\rho_i\}_{i=1}^N$ be a smooth partition of unity subordinate to the cover $\{U_i\}$. Thus, for any function $u: M\to\mathbb{R}$, we can consider its coordinate representation: 
\begin{align}
    u_i = (\rho_i u ) \circ\phi_i^{-1}:\mathbb{R}^n \to \mathbb{R}.
\end{align}
\begin{definition}
    A function $u:M\to\mathbb{R}$ has $C^{k,\alpha}$ \textit{regularity}, for $k\geq 0$, $0\leq \alpha\leq 1$, if the norm:
    \begin{align}
        \Vert u\Vert_{C^{k,\alpha}(M)} = \sum_{i=1}^N \Vert u_i\Vert_{C^{k,\alpha}(\mathbb{R}^n)}
    \end{align}
    is finite.
    A function $u:M\to\mathbb{R}$ has $W^{k,p}$ \textit{regularity}, for $k\in\mathbb{N}$, $p\geq 1$, if the norm:
    \begin{align}
        \Vert u\Vert_{W^{k,p}(M)} = \sum_{i=1}^N \Vert u_i\Vert_{W^{k,p}(\mathbb{R}^n)}
    \end{align}
    is finite.   Let $f:M\to N$ be a map from manifold $M$ to $N$. Then $f$ has $C^{k,\alpha}$ ($W^{k,p}$) \textit{regularity}, if we choose coordinate charts $\{(V_i,\psi_i)\}_{i=1}^N$ so that $\psi_i\circ f: M\to \mathbb{R}$ has $C^{k,\alpha}$ ($W^{k,p}$) regularity, for each $i$.
\end{definition}

We also need to consider the convergence of functions on a sequence of manifolds. For this, we need to introduce Cheeger-Gromov convergence.
\begin{definition}
    Let $\{M_i\}$ be a sequence of compact manifolds. Suppose there exists a compact manifold $M_\infty$ so that there exists a sequence of diffeomorphisms
    \begin{align}
        \Phi_i: M_\infty \to M_i.
    \end{align}
    Then we say a sequence of functions $u_i:M_i\to \mathbb{R}$ convergent in $C^{k,\alpha}$ ($W^{k,p}$) \textit{Cheeger-Gromov sense} if $\{u_i\circ\Phi_i\}$ converge on $M_\infty$ in $C^{k,\alpha}$ ($W^{k,p}$) sense.
\end{definition}
For a sequence of collapsing manifolds, we have the following sense of convergence. 
\begin{definition}\label{d.local_convergence}
 Let $\{(M_i^n,g_i)\}$ be a sequence of compact $n$-dimensional smooth manifolds which converges to $(X^d,g_\infty)$, a compact $d$-dimensional smooth manifold in the Gromov-Hausdorff sense. We say that the convergence is in \textit{local $C^{k,\alpha}$ $(W_{loc}^{k,p})$ sense} if there is a uniform constant $r>0$ so that for every sequence $(B_{x_i}(r),g_i)\to (B_{x_\infty}(r),g_\infty)$, we have the following diagram
    \begin{center}
    \begin{tikzcd}
    & (\tilde{B}_{\tilde{x}_i}(r), \tilde{g}_i, G_i) \arrow[r,  "C^{k,\alpha}"] \arrow[d, "\pi_i"]
    & (\tilde{B}_{\tilde{x}_\infty}(r),\Tilde{g}_\infty, G_\infty) \arrow[d, swap, "\pi_\infty"]\\
    & (B_{x_i}(r), g_i) \arrow[r, "\text{GH}"]
    & (B_{x_\infty}(r), g_\infty)
    \end{tikzcd},
    \end{center}
    with the upper sequence converging in the $C^{k,\alpha}$ $(W_{loc}^{k,p})$ Cheeger-Gromov sense.
\end{definition}

\subsection{Nilpotent group and actions}
We now recall some basic facts about nilpotent Lie groups and their actions.
\begin{definition}[Nilpotent group]
    A \textit{nilpotent Lie group} is a Lie group, $\mathcal{N}$, whose Lie algebra is $\mathfrak{g}$ is a nilpotent Lie algebra, that is 
    \begin{align}
        \mathfrak{g}_1 = [\mathfrak{g},\mathfrak{g}],\quad \mathfrak{g}_2 = [\mathfrak{g}, \mathfrak{g}_1], ...
    \end{align}
    eventually vanishes, i.e., $\mathfrak{g}_{\ell} = 0$, for some finite $\ell >0$.
\end{definition}

\begin{remark} Any connected nilpotent Lie group $\mathcal{N}$ is diffeomorphic to $\mathbb{R}^n$, given by the exponential map. This can be seen from the Baker-Campbell-Hausdorff formula since it terminates in finitely many steps. 
\end{remark}

\begin{definition}[Automorphism and Left action]
    The \textit{automorphism group} of a nilpotent Lie group, \(\operatorname{Aut}(\mathcal N)\) is the group of Lie group automorphisms of \(\mathcal N\):
\begin{align}
\operatorname{Aut}(\mathcal N)
=
\{\phi:\mathcal N\to \mathcal N:
\phi \text{ is a smooth group isomorphism}\}.
\end{align}

Let $h\in\mathcal{N}$, then we define the left action:
\begin{align}
    L_h: \mathcal{N} \to \mathcal{N}, \quad x\mapsto hx.
\end{align}
Then, we denote the group of \textit{left translations}:
\begin{align}
    \mathcal{N}_L : = \{L_h:h\in \mathcal{N}\}.
\end{align}
\end{definition}
It can be easily seen that there is a natural isomorphism $\mathcal{N}_L\cong \mathcal{N}$.
Similarly, using 
\begin{align}
    R_h: \mathcal{N} \to \mathcal{N}, \quad x\mapsto xh,
\end{align}
we can define the \textit{right translation} group $\mathcal{N}_R\cong\mathcal{N}$.
\begin{definition}[Left and right invariant vector fields]
    For \(V\in\mathfrak n\), the corresponding \textit{left-invariant vector field} is
\begin{align}
V^L_g=(L_g)_*V,
\end{align}
and the corresponding \textit{right-invariant vector field} is
\begin{align}
V^R_g=(R_g)_*V.
\end{align}
\end{definition}
It can be shown that left action is generated by a right invariant vector field
\begin{align}
\widetilde V_g
=
\left.\frac{d}{dt}\right|_{t=0}\exp(tV)g
=
(R_g)_*V
=
V^R_g.
\end{align}

There is a natural flat affine connection on \(\mathcal N\), which we define next.
\begin{definition}[Affine connection and affine structure]
    The \textit{canonical affine connection}, $\nabla^{can}$, is the connection for which the left-invariant vector fields are parallel, i.e.,
\begin{align}
\nabla^{\mathrm{can}}V^L=0
\end{align}
for every left-invariant vector field \(V^L\).
\end{definition}
Note that, $\nabla^{\mathrm{can}}$ is flat, and, in general, not torsion free, in fact:
\begin{align}
    T^{\nabla^{\mathrm{can}}}(X,Y) = -[X,Y].
\end{align}
Since the left-invariant vector fields are parallel, so is $T^{\nabla^{can}}$.

Note also that, when $\nabla^{\mathrm{can}}$ acts on a right invariant vector field, we have:
\begin{align}\label{eq:right invariant vf}
    \nabla^{\mathrm{can}}_{W^R}V^R = [V,W]^R,
\end{align}
which is a constant right invariant vector field.
\begin{definition}\label{d.affine structure}
An \textit{affine structure} on a smooth manifold is given by a choice of a flat, affine connection $\nabla$ on $TM$ with parallel torsion.
     
     Two affine manifolds, $(M,\nabla^M)$, $(N,\nabla^N)$ are \textit{affine equivalent}, if there is a diffeomorphism $\phi: M\to N$, so that $\phi^*\nabla^N = \nabla^M$.

     The \textit{affine group}, $\operatorname{Aff}(M,\nabla^M)$ (or just $\operatorname{Aff}(M)$), of $(M,\nabla^M)$, is defined to be the affine isomorphism group of $(M,\nabla^M)$.
\end{definition}
Note that $\operatorname{Aff}(\mathcal{N})$ of a nilpotent Lie group $\mathcal{N}$ is equivalent to the group that preserves the left invariant vector fields. Thus,
\begin{align}
    \operatorname{Aff}(\mathcal{N}) = \mathcal{N}_L\rtimes \operatorname{Aut}(\mathcal{N}).
\end{align}

Now, we define nilmanifolds and infranilmanifolds. 

\begin{definition}\label{d:infranil}
    A compact \textit{nilmanifold} is a quotient
\begin{align}
M=\Lambda\backslash\mathcal N,
\end{align}
where \(\Lambda\subset\mathcal{N}_L\) is a discrete cocompact subgroup.

A compact \textit{infranilmanifold} is a quotient
\begin{align}
Z=\Gamma\backslash\mathcal N,
\end{align}
where \(\Gamma\) is a torsion-free discrete cocompact subgroup of $\mathcal{N}_L\rtimes \operatorname{Aut}(\mathcal{N})$.
\end{definition}
It can be shown that $\Lambda = \Gamma\cap \mathcal{N}_L$, the pure translation subgroup, is a normal subgroup of $\Gamma$ with finite index. Thus, an infranilmanifold is finitely covered by a nilmanifold. Furthermore, the canonical affine connection descends to an affine connection on the quotient, so that the group $\operatorname{Aff}(\Gamma\backslash \mathcal{N})$ is defined. 

Finally, we will talk about the structure of the affine group of an infranilmanifold, denoted by $\operatorname{Aff}(\Gamma\backslash \mathcal{N})$, and its identity component $\operatorname{Aff}^0(\Gamma\backslash \mathcal{N})$.
\begin{proposition}
\label{p:aff}
Let $\Gamma\subseteq \mathcal{N}_L\rtimes \operatorname{Aut}(\mathcal{N})$ be a cocompact lattice group, then we have
\begin{align}
    \operatorname{Aff}(\Gamma\backslash \mathcal{N}) \cong N_{\operatorname{Aff}(\mathcal{N})}(\Gamma)/\Gamma
\end{align}
and
\begin{align}
    \operatorname{Aff}^0(\Gamma\backslash \mathcal{N}) \cong C_{\operatorname{Aff}(\mathcal{N})}(\Gamma)/\Gamma,
\end{align}
where $N_{\operatorname{Aff}(\mathcal{N})}(\Gamma)$ and $C_{\operatorname{Aff}(\mathcal{N})}(\Gamma)$ are normalizer and centralizer of $\Gamma$ in $\operatorname{Aff}(\mathcal{N})$, respectively.
\end{proposition}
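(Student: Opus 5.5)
The plan is to pass to the universal cover $\mathcal{N}$ and use the description $\operatorname{Aff}(\mathcal{N}) = \mathcal{N}_L\rtimes \operatorname{Aut}(\mathcal{N})$ recalled above. Let $\pi:\mathcal{N}\to \Gamma\backslash\mathcal{N}$ be the projection. Since $\mathcal{N}$ is connected and simply connected, $\pi$ is the universal covering and $\Gamma$ is its deck group. Since $\nabla^{can}$ is $\Gamma$-invariant, $\pi$ is a local affine isomorphism.

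\textbf{Step 1 (lifting and descending).} Let $\phi\in \operatorname{Aff}(\Gamma\backslash\mathcal{N})$. By the lifting criterion, $\phi$ lifts to a diffeomorphism $\tilde\phi$ of $\mathcal{N}$, and $\tilde\phi$ is affine because $\pi$ is a local affine isomorphism. Hence $\tilde\phi\in \operatorname{Aff}(\mathcal{N})$. Because $\tilde\phi$ covers a map of the quotient, it carries $\Gamma$-orbits to $\Gamma$-orbits. Therefore $\tilde\phi\gamma\tilde\phi^{-1}$ is a deck transformation, i.e.\ it lies in $\Gamma$, and so $\tilde\phi \in N_{\operatorname{Aff}(\mathcal{N})}(\Gamma)$. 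Conversely, every element of the normalizer maps $\Gamma$-orbits to $\Gamma$-orbits, so it descends to an affine diffeomorphism of $\Gamma\backslash\mathcal{N}$. Two lifts of the same $\phi$ differ by a deck transformation, so the kernel of $N_{\operatorname{Aff}(\mathcal{N})}(\Gamma)\to \operatorname{Aff}(\Gamma\backslash\mathcal{N})$ is exactly $\Gamma$. This gives the first isomorphism.

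\textbf{Step 2 (identity component lies in the centralizer).} Take a continuous path $\phi_t$ in $\operatorname{Aff}(\Gamma\backslash\mathcal{N})$ with $\phi_0=\mathrm{id}$. Lift it to a continuous path $\tilde\phi_t$ in the normalizer with $\tilde\phi_0=\mathrm{id}$. For each fixed $\gamma\in\Gamma$, the map $t\mapsto \tilde\phi_t\gamma\tilde\phi_t^{-1}$ is continuous with values in the discrete set $\Gamma$. It is therefore constant, equal to $\gamma$. So $\tilde\phi_t$ centralizes $\Gamma$, and $\operatorname{Aff}^0$ is contained in the image of $C_{\operatorname{Aff}(\mathcal{N})}(\Gamma)$. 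Here the quotient $C(\Gamma)/\Gamma$ is understood as $C(\Gamma)/(C(\Gamma)\cap\Gamma)$, which is its image in $N(\Gamma)/\Gamma$.

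\textbf{Step 3 (the centralizer is connected).} This is the step I expect to be the main obstacle, and I would handle it by computing $C(\Gamma)$ explicitly. Write an element as $(h,A)$, acting by $x\mapsto hA(x)$, and let $\Lambda=\Gamma\cap\mathcal{N}_L$.
\begin{itemize}
\item Commuting with $L_\lambda$ for every $\lambda\in\Lambda$ gives $hA(\lambda)=\lambda h$, that is, $A(\lambda)=h^{-1}\lambda h$ on $\Lambda$.
\item By Mal'cev rigidity, a lattice in a simply connected nilpotent group is Zariski dense, or equivalently $\log\Lambda$ spans $\mathfrak{n}$. So an automorphism is determined by its values on $\Lambda$, and $A=\mathrm{Ad}(h^{-1})$ on all of $\mathcal{N}$.
\item Consequently $(h,A)$ is the right translation $R_h\colon x\mapsto xh$.
\item Commuting with a general element $L_a\circ B$ of $\Gamma$ then amounts to $aB(x)h=aB(xh)$, i.e.\ $B(h)=h$.
\end{itemize}
Thus $C(\Gamma)\cong\{h\in\mathcal{N}: B(h)=h \text{ for all } B \text{ in the finite holonomy group } \Gamma/\Lambda\}$. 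Because $\exp$ is an equivariant diffeomorphism, this set equals $\exp$ of the fixed subalgebra of $\mathfrak{n}$. It is therefore a connected subgroup.

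The image of a connected group is connected, so the image of $C(\Gamma)$ lies in $\operatorname{Aff}^0$. Together with Step 2, this gives $\operatorname{Aff}^0(\Gamma\backslash\mathcal{N})\cong C_{\operatorname{Aff}(\mathcal{N})}(\Gamma)/\Gamma$.
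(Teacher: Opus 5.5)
Your proof is correct. Steps 1 and 2 are the paper's argument written out in full. The paper also lifts affine maps of $\Gamma\backslash\mathcal{N}$ to $N_{\operatorname{Aff}(\mathcal{N})}(\Gamma)$, and it also uses discreteness of $\Gamma$ to place the identity component inside the image of the centralizer.

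The difference is that the paper stops there and never proves the reverse inclusion: that every element of $C_{\operatorname{Aff}(\mathcal{N})}(\Gamma)$ descends into $\operatorname{Aff}^0(\Gamma\backslash\mathcal{N})$. Your Step 3 closes this gap. You compute the centralizer as $\{R_h : B(h)=h \text{ for all holonomy } B\}$, which is $\exp$ of a Lie subalgebra and hence connected.

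This computation is also the rigorous version of Remark~\ref{r:NR}, which only determines the centralizer of all of $\mathcal{N}_L$. Since $\Gamma$ contains only the lattice $\Lambda$, your appeal to Mal'cev rigidity is what is needed: $\log\Lambda$ spans $\mathfrak{n}$, which forces $A=\operatorname{Ad}(h^{-1})$. That is exactly the justification missing from the paper's claim that $C_{\operatorname{Aff}(\mathcal{N})}(\Gamma)$ sits inside $\mathcal{N}_R$.

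Your reading of $C(\Gamma)/\Gamma$ as $C(\Gamma)/(C(\Gamma)\cap\Gamma)=C(\Gamma)/Z(\Gamma)$ is also necessary. $\Gamma$ is generally nonabelian and not contained in its own centralizer, a point the paper glosses over.

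The inputs you use tacitly are standard and shared with the paper. First, $\operatorname{Aff}(\Gamma\backslash\mathcal{N})$ is a Lie group, so its identity component is path connected, as Step 2 requires. Second, $\Lambda=\Gamma\cap\mathcal{N}_L$ is a lattice of finite index in $\Gamma$.
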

\begin{proof}
    For every element in $\operatorname{Aff}(\Gamma\backslash \mathcal{N})$ can be lifted to $\operatorname{Aff}( \mathcal{N})$ that makes every $\Gamma$-orbit to a $\Gamma$-orbit. Thus, the lift of $\operatorname{Aff}(\Gamma\backslash \mathcal{N})$ can be identified with $N_{\operatorname{Aff}(\mathcal{N})}(\Gamma)$. Thus, we have $\operatorname{Aff}(\Gamma\backslash \mathcal{N}) \cong N_{\operatorname{Aff}(\mathcal{N})}(\Gamma)/\Gamma$ after the quotient.

    Note that since $\Gamma$ is a discrete subgroup, and the identity map is in the centralizer. Thus, we have $\operatorname{Aff}^0(\Gamma\backslash \mathcal{N}) \cong C_{\operatorname{Aff}(\mathcal{N})}(\Gamma)/\Gamma$.
\end{proof}
\begin{remark} 
\label{r:NR} 
It is natural to identify the centralizer of the left action group $\mathcal{N}_L$ as the right action subgroup $\mathcal{N}_R$. Indeed, for any $g\in \mathcal{N}_L$ and $\phi\in C_{\operatorname{Aff(\mathcal{N})}}(\mathcal{N}_L)$, if $\phi(g(x)) = g\phi(x)$, for any $x\in\mathcal{N}$, then if we choose $x = e$, let $\phi(e) = h$, then $\phi(g) = gh$. Thus, we may view $C_{\operatorname{Aff}(\mathcal{N})}(\Gamma)$ as a subgroup of $\mathcal{N}_R$, which acts trivially on the right-invariant vector fields.
\end{remark}

\section{Cheeger-Fukaya-Gromov fibration theorem}
\label{s:cfg}

Let $\{(M^n_i ,g_i)\}$ be a sequence of closed $n$-dimensional Riemannian manifolds such that:
\begin{itemize}
    \item The Riemannian curvature is uniformly bounded: $|\operatorname{Rm}(g_i)|_{g_i} \leq C_1$;
    \item The diameter is bounded from above: $d(M_i, g_i)\leq C_2$.
\end{itemize}
Then from the general theory, we know that:
\begin{align}
    (M_i^n ,g_i) \to (X,d)
\end{align}
In the Gromov-Hausdorff sense, to some metric space $(X,d)$. In general, by Theorem 0.6 in \cite{Fuk88}, the dimension of $X$ is an integer. 

From now on, we assume in our case $(X,d)$ is a compact Riemannian manifold of dimension $d<n$, which we 
denote by $(X^d, g_\infty)$.  

 We recall the fibration theorem of Cheeger-Fukaya-Gromov. The fibration theorem was first obtained by \cite{Fuk87}, then improved by \cite{CFG}. It was improved further by Naber-Zhang and Sun-Zhang latter. We next state the following version which includes the finite regularity. 
 
\begin{theorem}{\cite{Fuk87}, \cite{CFG}, \cite{NaberZhang},  \cite{SunZhang}}
\label{t.cfg} 
Let $\{(M^n_i,g_i)\}$, $(X^d,g_\infty)$ be defined as above, for $i$ sufficiently large, for any $0 < \alpha < 1$, there exists a smooth fiber bundle map:
\begin{align}
    F_i : (M^n_i ,g_i)\to (X^d, g_\infty)
\end{align}
with a sequence $\tau_i \to 0$ such that:
\begin{itemize}
    \item $F_i$ is a $\tau_i$-Gromov-Hausdorff approximation;
    \item There exists $C$ such that there exists a uniform $C^{2,\alpha}$ estimate: $\Vert F_i \Vert_{C^{2,\alpha}} < C$;
    \item The second fundamental form of the fiber:
    \begin{align}
    \label{e:sff}
        \Vert A_{F_i^{-1}(x)} \Vert_{C^{0,\alpha}} \leq C_0,
    \end{align}
    for some uniform constant $C_0(n, C_1)$;
    \item $F_i$ is almost-Riemannian submersion, i.e. $\forall v$, vector that is orthonormal to the fiber of $F_i$, we have:
    \begin{align}
        (1-\tau_i)|v|_{g_i}\leq |dF_i(v)|_{g\infty}\leq (1+\tau_i)|v|_{g_i}.
    \end{align}
\end{itemize}
    
\end{theorem}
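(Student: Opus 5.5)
The proof assembles Theorem~\ref{t.cfg} from the cited works rather than building it from scratch; the only new point is the finite-regularity control. The first step is to pass to harmonic coordinates on local universal covers. By the bounded curvature assumption $|\operatorname{Rm}(g_i)|\le C_1$, the exponential map (or the local universal cover of a ball of fixed radius $r$) pulls $g_i$ back to a non-collapsed metric $\tilde g_i$ on $\tilde B_{\tilde x_i}(r)$ with injectivity radius bounded below. By the Anderson--Jones--Karcher harmonic radius estimate, $\tilde g_i$ is then uniformly bounded in $C^{1,\alpha}\cap W^{2,p}$ in harmonic coordinates. Passing to subsequences gives the diagram of Definition~\ref{d.local_convergence}: $\tilde g_i\to\tilde g_\infty$ in $C^{1,\alpha}$, the pseudogroups $G_i$ converge to a nilpotent Lie group $G_\infty$ acting isometrically, and $B_{x_\infty}(r)=\tilde B/G_\infty$. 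Because $X$ is assumed to be a manifold, the isotropy is trivial, so $G_\infty$ acts freely and $\pi_\infty$ is a Riemannian submersion of class $C^{1,\alpha}$.

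The second step constructs $F_i$ as in Fukaya and Cheeger--Fukaya--Gromov. Choose a finite net $\{p_k\}$ in $X$ and transplant it to points $p_k^i\in M_i$ via the Gromov--Hausdorff approximation. Define the embedding $\Phi_i(y)=\big(\phi(d_{g_i}(y,p^i_k))\big)_k$, with $\phi$ a cutoff, and likewise $\Phi_\infty$ on $X$. Then set $F_i=\Phi_\infty^{-1}\circ\pi_{\mathrm{near}}\circ\Phi_i$, where $\pi_{\mathrm{near}}$ is nearest-point projection onto the embedded image of $X$. Distance functions have only $C^{1,\alpha}$-type regularity, so the distances are first replaced by smoothed versions: averages of $\tilde d(\cdot,\tilde p)$ over the $G_i$-orbit and small balls, in the harmonic coordinates, i.e. the center-of-mass smoothing of CFG. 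Convergence of the averaged functions to $G_\infty$-invariant functions on $\tilde B$, which are pullbacks of functions on $X$, gives $C^{1,\alpha}$ closeness to a submersion. From this, $F_i$ is a fibration and a $\tau_i$-GH approximation, and the almost-Riemannian-submersion bound holds with $\tau_i\to0$. For the $C^{2,\alpha}$ bound we use the refinement of Naber--Zhang and Sun--Zhang: replace distance functions by solutions of the Laplace equation, or use harmonic/Gaussian-averaged coordinates. Schauder theory applied in harmonic coordinates, where $\tilde g_i\in C^{1,\alpha}$ and the bounded curvature gives the Ricci term in $C^{0,\alpha}$ after the standard bootstrap, yields $\|F_i\|_{C^{2,\alpha}}\le C$.

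The third step bounds the second fundamental form. The fibers $F_i^{-1}(x)$ are level sets of a map with $C^{2,\alpha}$ bounds and $dF_i$ uniformly surjective on the horizontal space, by the almost-submersion estimate. Hence $A$, which is expressed through $\nabla^2F_i$ and $(dF_i|_{H})^{-1}$, is bounded in $C^{0,\alpha}$ by a constant depending only on $n$ and $C_1$.

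The main obstacle is the regularity step, namely obtaining $C^{2,\alpha}$ rather than $C^{1,\alpha}$ control from curvature bounds alone, uniformly as the fibers collapse. Here I would lift to the non-collapsed cover, show that the harmonic or averaged approximating functions are $G_i$-almost invariant with $C^{2,\alpha}$ estimates independent of $i$, and then descend. This is precisely where the cited results of Naber--Zhang and Sun--Zhang are invoked.
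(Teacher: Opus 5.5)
Your first and third steps, and the way you get the almost-Riemannian-submersion bound (convergence of $F_i\circ\pi_i$ to $\pi_\infty$ on non-collapsed local covers), agree with the paper. The gap is in your second step, which is the real content of this version of the theorem. You never produce a \emph{globally defined} $F_i$ with uniform $C^{2,\alpha}$ bounds from $|\operatorname{Rm}(g_i)|\le C_1$ alone.

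\begin{itemize}
\item On a collapsing $M_i$ the functions $d_{g_i}(\cdot,p_k^i)$ are only uniformly Lipschitz, not $C^{1,\alpha}$. Already on a thin flat torus their gradients jump across cut loci at the fiber scale.
\item You give no argument that averaging them over orbits and small balls yields uniform $C^{2,\alpha}$ bounds. Hessian comparison bounds $\nabla^2 d^2$, but it gives no control of its modulus of continuity. CFG obtain higher regularity only after smoothing the metric, which is exactly the step this paper avoids.
\item Your fallback, ``replace distance functions by solutions of the Laplace equation,'' cannot be done globally as stated, since harmonic functions on the closed $M_i$ are constant. You need one of two things, and you carry out neither:
  \begin{itemize}
  \item local solutions together with cut-offs having uniform $C^{2,\alpha}$ bounds (your cut-offs $\phi(d_{g_i}(\cdot,p_k^i))$ are only Lipschitz); or
  \item a global intrinsic smoothing, such as a fixed-time heat flow or a Poisson solve.
  \end{itemize}
\item $|\operatorname{Rm}|\le C_1$ does not place $\operatorname{Ric}$ in $C^{0,\alpha}$. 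The $C^{2,\alpha}$ bound for harmonic functions comes from Schauder theory, because the coefficients of $\Delta$ are uniformly $C^{1,\alpha}$ in harmonic coordinates on the local cover. Alternatively, Bochner with $\operatorname{Ric}\in L^\infty$ gives $W^{3,p}$. Either way, this applies to solutions of equations, not to your composite $F_i$.
\end{itemize}

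The paper supplies precisely these missing pieces. It uses two local ingredients:
\begin{itemize}
\item Cheeger--Colding harmonic splitting maps $\Phi^i_{x_i}$, composed with harmonic coordinates $H_{x_\infty}^{-1}$ on $X$;
\item cut-offs $\psi_p=\Psi\circ k_p$ with $\Delta k_p=\kappa$ on $B_1(p)\setminus B_{1/2}(p)$.
\end{itemize}
Both are upgraded to $C^{2,\alpha}$ by Schauder on the local cover. The paper then glues by letting $F_i(p)$ be the unique minimizer of the convex function $\eta(p,\cdot)=\sum_k\phi_k(p)\,\eta_{q^k_\infty}(F_{q_k}(p),\cdot)$. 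The implicit function theorem applied to $\nabla_y\eta=0$ transfers the $C^{2,\alpha}$ dependence on $p$ to $F_i$, because the $y$-dependence enters only through fixed coordinates on $X$.

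Your nearest-point projection does not have this feature. Projection onto a $C^{k,\alpha}$ submanifold is only $C^{k-1,\alpha}$. If $\Phi_\infty$ is the limit of PDE-smoothed $\Phi_i$, it is only $C^{2,\alpha}$, since the limiting operator is the drift Laplacian with $\chi\in C^{1,\alpha}$. So $F_i$ would only be $C^{1,\alpha}$, unless you project onto a smoothed copy of $\Phi_\infty(X)$ and re-identify it with $X$.
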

The proof can be found in the Appendix. In our application, we focus mainly on finite H\"older and Sobolev regularity. We next state the following finite regularity version of the theorem with a Riemannian curvature $C^k$ bound assumption (although we only require the case $k=0$ in this paper).

\begin{corollary}\label{c.fibration_regularity}
    Let $\{(M^n_i,g_i)\}$, $(X^d,g_\infty)$ be defined as in Theorem~\ref{t.cfg}.  In addition, we assume
        \begin{align}
            \Vert\operatorname{Rm}(g_i)\Vert_{C^{k}} \leq C_k, 
            \end{align}
            for some $k \geq 0$, then for $i$ large the fibration map satisfies
        \begin{align}
            \Vert F_i\Vert_{C^{k+2,\alpha}} \leq C(n,C_k, X),
        \end{align}
        the second fundamental form of the fiber satisfies
        \begin{align}
            \Vert A_{F_i^{-1}(x)}\Vert_{C^{k,\alpha}}\leq C(n,C_k,X),
        \end{align}
        for any $0<\alpha<1$. We also have
        \begin{align}
           \Vert F_i\Vert_{W^{k+3,p}} \leq C(n,C_{k},X) 
        \end{align}
        and the second fundamental form of the fiber satisfies
        \begin{align}
            \Vert A_{F_i^{-1}(x)}\Vert_{W^{k+1,p}}\leq C(n,C_{k},X),
        \end{align}
for any $p > 1$.
\end{corollary}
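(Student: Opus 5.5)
The plan is to localize, pass to local universal covers where the geometry is non-collapsed, and bootstrap the regularity of the construction in Theorem~\ref{t.cfg} one curvature derivative at a time. First I will work on the frame bundle, or equivalently on local covers $\tilde{B}_{\tilde x_i}(r)$ as in Definition~\ref{d.local_convergence}, where the injectivity radius is uniformly bounded below by a constant depending on $n$ and $C_0$. On such a ball I will use harmonic coordinates. The bound $\Vert \operatorname{Rm}(g_i)\Vert_{C^k}\le C_k$ gives control of the metric coefficients in these coordinates via the equation
\begin{align}
 \Delta_{g_i} g_{i,ab} = -2 \operatorname{Ric}_{ab} + Q(g_i,\partial g_i).
\end{align}
By Schauder theory this yields $\Vert g_i\Vert_{C^{k+1,\alpha}}\le C$. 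By $L^p$ theory, using $\operatorname{Ric}\in W^{k,p}$ (indeed $C^k$), it also yields $\Vert g_i\Vert_{W^{k+2,p}}\le C$ for every $p<\infty$. This is the standard Anderson-type harmonic radius estimate, lifted to the covers. The deck group $G_i$ acts isometrically, so these bounds are $G_i$-equivariant.

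Next I will recall how $F_i$ is built in the proof of Theorem~\ref{t.cfg} (Appendix): on each local cover one takes the limit $G_\infty$-invariant harmonic coordinates, or equivalently $G_\infty$-averaged distance-type functions. One then transplants them to $\tilde B_{\tilde x_i}(r)$, averages over the almost-group $G_i$, and solves $\Delta_{g_i} u = 0$ with nearby boundary data. The resulting $u_i$ is $G_i$-invariant and descends to local components of $F_i$, which are glued by a center-of-mass construction on $X$. For these harmonic functions, interior estimates give one more derivative than the metric: $\Vert u_i\Vert_{C^{k+2,\alpha}}\le C$ and $\Vert u_i\Vert_{W^{k+3,p}}\le C$. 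Averaging and gluing use only the smooth geometry of $X$ and partitions of unity on $X$, so they preserve these norms. Since $X$ is compact, finitely many balls suffice, and this gives the bounds on $F_i$ with $C=C(n,C_k,X)$.

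For the fibers, $A_{F_i^{-1}(x)}$ is computed from $dF_i$, $\nabla^2 F_i$ and the metric. Concretely, for $V,W$ tangent to the fiber, $dF_i(A(V,W))=-\nabla^2F_i(V,W)$, and the normal part is inverted using the almost-Riemannian submersion bound of Theorem~\ref{t.cfg}. Hence $A$ involves exactly one more derivative than $dF_i$, together with $g_i$ and its inverse. From $F_i\in C^{k+2,\alpha}$, $g_i\in C^{k+1,\alpha}$ we get $A\in C^{k,\alpha}$. From $F_i\in W^{k+3,p}$, $g_i\in W^{k+2,p}$, and the algebra property of $W^{k+1,p}$ for $p>n$, we get $A\in W^{k+1,p}$. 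For $1<p\le n$ the bound follows from the case of large $p$ by Hölder's inequality on the compact, bounded-diameter domains.

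The main obstacle is the second step: ensuring that the averaging over the almost-isometric group $G_i$ and the gluing of local maps do not lose derivatives. The danger is that the averaged functions are only invariant, not harmonic. I would handle this by re-solving a Dirichlet problem after averaging, which restores the elliptic gain while keeping the functions $C^{k+2,\alpha}$-close to the averaged ones. The required uniformity in $i$ comes from the Cheeger–Gromov convergence of the covers in $C^{k+1,\alpha}$.
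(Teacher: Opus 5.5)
Your proposal is correct and is essentially the paper's argument. The shared steps are: uniform $C^{k+1,\alpha}$ and $W^{k+2,p}$ control of the metric on local covers; one derivative of elliptic gain for the harmonic maps defining the local fibrations; patching that preserves these bounds; and $A$ read off from $\nabla^2F_i$. (For the Sobolev gain the paper uses $\Delta\nabla u=\nabla\Delta u+\operatorname{Ric}(\nabla u)$ instead of $L^p$ theory in harmonic coordinates, which is equivalent.)

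The one correction concerns how you recall the Appendix construction. There the local maps are Cheeger--Colding harmonic maps defined on balls $B_2(x_i)\subset M_i$ themselves, so no averaging over $G_i$ or re-solving is needed, and the ``main obstacle'' you flag does not arise. On the other hand, the gluing weights are cut-off functions $\Psi\circ k_{p_i}$ on $M_i$ with $\Delta k_{p_i}=\kappa$, not functions on $X$. They therefore need the same interior Schauder/$L^p$ upgrade to $C^{k+2,\alpha}$ and $W^{k+3,p}$. Alternatively, your idea of a partition of unity on $X$ pulled back through the already-regular local maps would also work.
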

\begin{proof} The proof just needs some additional remarks to the argument in the Appendix. 
It follows from the standard convergence theory. The construction of the fiberation is that Step 1 we construct a local fiberation using harmonic coordinate, then in Step 2 we construct cut-off functions to patch them together. 
    
We now focus on the regularity issue, so that, around each $x_i\in M_i$, the local universal covers converge in $C^{k+1,\alpha}$ or in $W^{k+1,p}_{loc}$ in the sense of Definition~\ref{d.local_convergence}; see, for example, \cite{grove1997comparison}.

For the H\"older estimate, note that we have local $C^{k+1,\alpha}$ estimate for the metrics, then the harmonic function constructed in Step 1 satisfies uniform $C^{k+2,\alpha}$ estimates, by standard Schauder estimates. Also, the cut-off function constructed in Step 2 has uniform $C^{k,\alpha}$ estimates, so we are done.
  
In the Sobolev case, note that we have local $W^{k+2,p}$ estimates for the metrics. To get the $W^{k+3,p}$ estimate on $F_i$, notice that on functions:
\begin{align}
    \Delta(\nabla u ) = \nabla\Delta u + \operatorname{Ric}(\nabla u).
\end{align}
Thus, we apply the $W^{k+2,p}$ estimate on this equation and get the desired estimate for the harmonic function constructed in Step 1 and the cut-off function constructed in Step 2, and the result follows.  
\end{proof}

Now we need to discuss the structure of the fiber. From \eqref{e:sff}, the second fundamental form of any fiber is uniformly bounded, so we can conclude that all the fibers of $F_i$ are almost-flat manifolds:
    \begin{align}
        \operatorname{diam}(F_i^{-1}(x_i))^2\cdot |\operatorname{Rm}_{F_i^{-1}(x_i)}| < \tau(\epsilon),
    \end{align}
    for $i$ sufficiently large. For a single fiber, we recall the following theorem of Ruh, to which we add several explicit finite regularity estimates.

\begin{theorem}[\cite{ruh1982almost}]\label{t.ruh}
    There is a small constant, $\epsilon(n)>0$ so that the following holds. Let $(Z,h)$ be a smooth manifold so that:
    \begin{enumerate}
        \item[(1)] The Riemannian curvature 
        \begin{align}
            \Vert\operatorname{Rm}(h)\Vert_{C^{k}} \leq C_k,
        \end{align}
        for some $k\geq 0$. 
        \item[(2)] The diameter of $(Z,h)$, $\delta: = \operatorname{diam}(Z,h)$:
        \begin{align}
            \delta\leq \epsilon(n).
        \end{align}
    \end{enumerate}
    Then we have $Z\simeq \Gamma\backslash \mathcal{N}$, where $\mathcal{N}$ is a simply-connected nilpotent Lie group, and $\Gamma$ is a cocompact subgroup of $\mathcal{N}_L\rtimes \operatorname{Aut}(\mathcal{N})$, where $\mathcal{N}_L \cong \mathcal{N}$ is the left translation subgroup, i.e., $Z$ is an infranilmanifold. Also, $\Lambda = \mathcal{N}\cap \Gamma$ is normal in $\Gamma$ with $\#(\Lambda\backslash\Gamma)\leq w_0$ for some constant $w_0 = w_0(n)$. Moreover, for every point $z\in Z$, there is a flat connection $\nabla^z$ on the orthonormal frame bundle $FZ$ of $Z$ satisfying the following:
    \begin{enumerate}
        \item[(1)] For every $q,q'\in Z$, there exists a gauge transformation, $g^{q,q'}: FZ \rightarrow FZ$ so that $g^{q,q'}\nabla^{q'} = \nabla^q$, with the estimate
        \begin{align}
        \label{gqc0}
            |g^{q,q'}-\operatorname{Id}|\leq C(n)\cdot\delta,
        \end{align}
        and 
        \begin{align}
            \Vert g^{q,q'} - \operatorname{Id} \Vert_{C^{k+1,\alpha}} \leq C(n,k)\cdot \delta
        \end{align}
        for any $0<\alpha<1$, or
        \begin{align}
            \Vert g^{q,q'} - \operatorname{Id} \Vert_{W^{k+2,p}}\leq C(n,k,p)\cdot\delta
        \end{align}
        for any $p > n$;
        \item[(2)] The difference of the associated connection $\nabla^z$ with the Levi-Civita connection $\nabla^{LC}$ on $TZ$:
        \begin{align}
            \Vert \nabla^{LC}-\nabla^z\Vert_{C^{k,\alpha}} \leq C(n,k)\cdot \delta
        \end{align}
        for any $0<\alpha<1$, or
        \begin{align}
            \Vert \nabla^{LC}-\nabla^z\Vert_{W^{k+1,p}}\leq C(n,k,p)\cdot\delta
        \end{align}
        for any $p > n$.
    \end{enumerate}
    \end{theorem}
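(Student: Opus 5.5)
The plan is to follow Ruh's original construction and keep track of the regularity at each step, working on a fixed scale. First rescale so that $\operatorname{diam}(Z,h)=\delta$ is small while $|\operatorname{Rm}|$ stays bounded; then $\delta^2|\operatorname{Rm}|$ is small. Ruh's gauge-theoretic picture places $FZ\times \mathfrak{g}$ in the $\mathfrak{so}(n)\ltimes\mathbb{R}^n$ Cartan connection. Fix $z\in Z$. Parallel transport of the Levi-Civita Cartan connection $\omega^{LC}$ along short radial geodesics from $z$ gives a trivialization. In this gauge, the curvature bound $\Vert \operatorname{Rm}\Vert_{C^k}\le C_k$ together with the diameter bound gives
\begin{align}
\Vert \omega^{LC}-\omega_0\Vert_{C^{k,\alpha}}\le C\delta ,
\end{align}
where $\omega_0$ is a flat reference connection. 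On the Sobolev side, the same argument produces $W^{k+1,p}$ bounds. We work in harmonic or exponential coordinates on the universal cover, where the metric is controlled in $C^{k+1,\alpha}$ and $W^{k+2,p}$ (as in Corollary~\ref{c.fibration_regularity}).

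Next we run Ruh's Newton iteration to produce the flat connection $\nabla^z$. Write the curvature of a connection $\omega$ as $\Omega(\omega)$ and solve $\Omega(\omega+\eta)=0$ by a sequence of corrections $\eta_j$. At each step we solve $d_\omega\eta_j=-\Omega(\omega_j)$ using the Hodge decomposition $\eta_j=d_\omega^*G\,\Omega(\omega_j)$ of the twisted de Rham complex. The key analytic input is a spectral gap for this twisted Laplacian: the holonomy is almost trivial in the $\mathbb{R}^n$-part and almost contained in a nilpotent group, so the twisted complex has no small eigenvalues. Ruh's argument bounds the gap below by $c(n)\delta^{-2}$ (before rescaling). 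With this gap in hand, elliptic Schauder estimates in $C^{k,\alpha}$, or $L^p$ estimates in $W^{k+1,p}$ with $p>n$ so that products are controlled, show the iteration converges quadratically in these norms. The total correction is then bounded by a constant times the initial curvature error. This gives (2):
\begin{align}
\Vert\nabla^{LC}-\nabla^z\Vert_{C^{k,\alpha}}\le C\delta, \qquad \Vert\nabla^{LC}-\nabla^z\Vert_{W^{k+1,p}}\le C\delta .
\end{align}
Ruh's topological conclusions then follow as in his paper: the flat connection has parallel torsion, the developing map identifies the universal cover with $\mathcal{N}$, the holonomy gives $\Gamma\subset \mathcal{N}_L\rtimes\operatorname{Aut}(\mathcal{N})$, and the index bound $w_0(n)$ comes from Gromov's almost-flat theorem.

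For (1), note that $\nabla^q$ and $\nabla^{q'}$ are both flat and both $C\delta$-close to $\nabla^{LC}$, so their difference $a=\nabla^{q}-\nabla^{q'}$ satisfies $\Vert a\Vert_{C^{k,\alpha}}\le C\delta$. Two flat connections with isomorphic holonomy and the same affine type are gauge equivalent. We obtain $g^{q,q'}$ by solving $dg=g\,a$-type equations: define $g^{q,q'}$ by comparing the parallel frames of the two connections, normalized to agree at a base point. Integrating along paths of length at most $\delta$ gives the pointwise bound \eqref{gqc0}. Differentiating the equation $(g^{q,q'})^{-1}dg^{q,q'}=\nabla^q-\nabla^{q'}$ then gains one derivative, which gives the $C^{k+1,\alpha}$ and $W^{k+2,p}$ bounds.

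The main obstacle is the uniform spectral gap, and specifically keeping the elliptic estimates uniform in $\delta$ at finite regularity. The estimates must be done on the universal cover at unit scale, or on the small manifold with scale-correct weights, so that the constants depend only on $n,k,p$. I would first work on the local universal cover, where the injectivity radius is bounded below, and check that the Newton step's gain is independent of $\delta$.
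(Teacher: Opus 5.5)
\textbf{Where the proposal fails.} Structurally your plan is the paper's: rescale, take Ruh's flat connection with parallel torsion, obtain regularity from elliptic estimates, and get $g^{q,q'}$ from the first-order equation $dg=\eta^{q'}g-g\eta^q$. The difference is technical. You propagate $C^{k,\alpha}$ and $W^{k+1,p}$ norms through Ruh's Newton iteration, which requires a spectral gap as input. The paper instead takes Ruh's final Coulomb-gauge solution and bootstraps $\Delta\tilde\eta^z=d^*(\operatorname{Rm}(\tilde h)-[\tilde\eta^z,\tilde\eta^z])$. The step that fails is your quantitative core: the gap $\lambda\ge c(n)\delta^{-2}$, and the resulting estimate ``total correction $\le C\cdot$ initial curvature error''. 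That estimate is exactly what produces the $O(\delta)$ bound in item (2).

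\textbf{A counterexample.} Let $Z_\delta=\Lambda_\delta\backslash H_3$. Give $H_3$ the left-invariant metric with orthonormal frame $e_1,e_2,e_3$ and $[e_1,e_2]=e_3$. Let $\Lambda_\delta$ be generated by $\exp(\delta e_1)$ and $\exp(\delta e_2)$, so that $\exp(\delta^2e_3)\in\Lambda_\delta$. Then:
\begin{itemize}
\item every $\Vert\operatorname{Rm}\Vert_{C^k}$ is independent of $\delta$, $\operatorname{diam}(Z_\delta)\le C\delta$, and the scalar curvature is $S\equiv-\tfrac12$;
\item the form $e^3$ is coclosed, pointwise orthogonal to the harmonic forms $e^1,e^2$, and satisfies $|de^3|=|e^3|$, so the Hodge Laplacian on $1$-forms orthogonal to the harmonic ones has bottom of spectrum $\le 1$, not $\ge c\,\delta^{-2}$.
\end{itemize}

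\textbf{Item (2) fails at the scale of $h$ for any method.} Suppose $\nabla^z$ is flat and metric with $\nabla^zT=0$. Then $A=\nabla^{LC}-\nabla^z$ is algebraic in $(T,h)$, hence $\nabla^z$-parallel. The curvature of $\nabla^z+A$ therefore reduces to $\operatorname{Rm}(X,Y)=[A_X,A_Y]+A_{T(X,Y)}$, with $T(X,Y)=A_YX-A_XY$. So $|\operatorname{Rm}|\le 4|A|^2$. On $Z_\delta$, $S\equiv-\tfrac12$ then forces $\sup|A|\ge c_0>0$ uniformly in $\delta$, so already the $C^0$ version of (2) is false.

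\textbf{The same scaling error elsewhere in your argument.}
\begin{itemize}
\item Radial gauge: in any gauge $d\omega+\omega\wedge\omega$ is the curvature, which is $O(1)$. So only the $C^0$ part of $\omega^{LC}-\omega_0$ is $O(\delta)$, not its $C^{k,\alpha}$ norm.
\item Gauge transformation: with $a=\nabla^q-\nabla^{q'}$ merely bounded, integrating $dg=g\,a$ over paths of length $\le\delta$ gives $|g^{q,q'}-\operatorname{Id}|\le C\delta$. It gives only $O(1)$ bounds on the derivatives of $g^{q,q'}$, not the claimed $C^{k+1,\alpha}$ or $W^{k+2,p}$ smallness.
\end{itemize}

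\textbf{The paper's sketch does not rescue this.} It has the same defect. The quoted bound $\Vert\tilde\eta^z\Vert_{W^{1,p}}\le C\delta^2$ is off by a factor $\delta$ in $C^0$, since $|\tilde\eta^z|_{\tilde h}=\delta|A|_h\gtrsim\delta$ on $Z_\delta$. Also, derivatives pick up a factor $\delta^{-2}$ when scaled back to $h$. What one can hope to prove is that $\nabla^{LC}-\nabla^z$ is bounded in $h$, i.e.\ $O(\delta)$ when measured in $\delta^{-2}h$. This matches the scaling $|\nabla^{LC}-\nabla^z|\sim\sqrt{|\operatorname{Rm}|}$ visible on nilmanifolds.

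\textbf{Hypothesis.} $\epsilon$ must depend on $C_0$: the condition should read $\delta^2C_0\le\epsilon(n)$, since otherwise tiny round spheres satisfy it. Your step ``$\delta^2|\operatorname{Rm}|$ is small'' silently assumes this.
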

\begin{proof}
    The topological classification is proved in \cite[Section~2]{ruh1982almost}. The proof relies on E. Ruh's construction of flat connections on almost flat manifolds, combined with a standard geometric rescaling argument and elliptic regularity theory.   Next, we summarize Ruh's approach in \cite{ruh1982almost}, so that we may consider the finite regularity estimates. 

    \textbf{Step 1: Rescaling the Metric.} \\
     Let $\tilde{h} = \delta^{-2} h$. Then the diameter of the new Riemannian manifold $(Z, \tilde{h})$ is exactly $1$. The Riemannian curvature tensor and its covariant derivatives scale accordingly. Specifically, the norm of the curvature under the rescaled metric satisfies
    \begin{align}
        \Vert \operatorname{Rm}(\tilde{h}) \Vert_{C^0(\tilde{h})} = \delta^2 \Vert \operatorname{Rm}(h) \Vert_{C^0(h)} \leq C_0 \delta^2.
    \end{align}
    Similar for the higher derivative bounds. Because $\delta \leq \epsilon(n)$ is chosen to be sufficiently small, the rescaled manifold $(Z, \tilde{h})$ has bounded geometry, diameter $1$, and arbitrarily small curvature.

    \textbf{Step 2: Ruh's Construction and Elliptic Regularity.} \\
    Following Ruh's approach, for any $z \in Z$, Ruh constructs a flat connection with parallel torsion $\tilde{\nabla}^z$ on the frame bundle, which is a perturbation of the Levi-Civita connection: $\tilde{\nabla}^z = \tilde{\nabla}^{LC} - \tilde{\eta}^z$, where $\tilde{\eta}^z$ is a connection $1$-form taking values in $\mathfrak{o}(n)$, which is orthogonal to the space of harmonic $1$-forms. 
    
    To fix the gauge, $\tilde{\eta}^z$ is constructed to be co-closed with respect to $\tilde{\nabla}^{LC}$ (i.e., $d^* \tilde{\eta}^z = 0$). The condition that $\tilde{\nabla}^z$ is flat ($F_{\tilde{\nabla}^z} = 0$) gives the Maurer-Cartan type equation:
    \begin{align}
    \label{Ruheqn}
        d\tilde{\eta}^z + [\tilde{\eta}^z, \tilde{\eta}^z] = \operatorname{Rm}(\tilde{h}).
    \end{align}
    Ruh constructs the solution $\tilde{\eta}^z$ of  \eqref{Ruheqn} which has parallel torsion and which satisfies 
    \begin{align}
    \label{Ruhest}
    \Vert \tilde{\eta}^z \Vert_{W^{1,p}} \leq C(n,p)\delta^2 \end{align} 
    for any $p > n$, and is orthogonal to the space of harmonic $1$-forms. 
    By taking the exterior co-derivative $d^*$, $\tilde{\eta}^z$ satisfies a nonlinear elliptic PDE of the form
    \begin{align}
        \Delta \tilde{\eta}^z = d^* \Big(\operatorname{Rm}(\tilde{h}) -  [\tilde{\eta}^z, \tilde{\eta}^z] \Big) ,
    \end{align}
    where $\Delta = dd^* + d^*d$ is the Hodge Laplacian.  Because of ellipticity, we can apply standard Schauder and Calderón-Zygmund elliptic regularity estimates on the rescaled manifold $(Z, \tilde{h})$. Specifically, if $\operatorname{Rm}(\tilde{h}) \in W^{k,p}$, elliptic theory implies $\tilde{\eta}^z \in W^{k+1,p}$. Furthermore, the elliptic estimates yield:
    \begin{align}
        \Vert \tilde{\eta}^z \Vert_{W^{1,p}(\tilde{h})} \leq \tilde{C}(n,p) \Vert \operatorname{Rm}(\tilde{h})  - [\tilde{\eta}^z, \tilde{\eta}^z] \Vert_{L^p(\tilde{h})} \leq C'(n,k,p) \delta^2,
    \end{align}
for $\tilde{\eta}^z$ orthogonal to the space of harmonic $1$-forms, and where we used \eqref{Ruhest} and Sobolev embedding to estimate the nonlinear term. 
    Scaling back to the original metric $h$, the connection $1$-form scales as $\eta^z = \delta \tilde{\eta}^z$. Therefore, the difference $\nabla^{LC} - \nabla^z = \eta^z$ satisfies the bound:
    \begin{align}
        \Vert \nabla^{LC} - \nabla^z \Vert_{W^{1,p}(h)} \leq C(n,k,p) \cdot \delta.
    \end{align}
A similar argument gives $W^{k+1,p}$-estimate for $k \geq 1$. The Schauder estimates follow similarly.

    \textbf{Step 3: Regularity of the Gauge Transformation.}   \\
    For any $q, q' \in Z$, let $\nabla^q$ and $\nabla^{q'}$ be the flat connections constructed above. The gauge transformation $\mathfrak{G} = g^{q,q'} \in \Gamma(\operatorname{Aut}(TZ))$ constructed in \cite{ruh1982almost} satisfies $\mathfrak{G} \nabla^{q'} = \nabla^q \mathfrak{G}$.
    
    Writing the connections in terms of their $1$-forms $\eta^q$ and $\eta^{q'}$ relative to the Levi-Civita connection, it is equivalent to the first-order linear PDE:
    \begin{align}
        &\mathfrak{G}^{-1}d\mathfrak{G} - \mathfrak{G}^{-1}\eta^{q'}\mathfrak{G} = -\eta^q\\
        &d\mathfrak{G} = \eta^{q'} \mathfrak{G} - \mathfrak{G} \eta^q.
    \end{align}
  We now find the optimal regularity of $g$. From Step 2, we established that $\eta^q, \eta^{q'} \in W^{1,p}$ for $p > n$.
      Using \eqref{gqc0}, $d\mathfrak{G}$ gives the full gradient, therefore, the first derivative $d\mathfrak{G}$ is in $W^{1,p}$, which concludes that $\mathfrak{G} \in W^{2,p}$. Thus:
    \begin{align}
        \Vert \mathfrak{G} - \operatorname{Id} \Vert_{W^{2,p}} \leq C \left( \Vert \eta^{q'} \Vert_{W^{1,p}} + \Vert \eta^q \Vert_{W^{1,p}} \right) \leq C(n,p) \cdot \delta.
    \end{align}
    Higher Sobolev regularity follows similarly. By replacing the Sobolev spaces with Hölder spaces, the exact same bootstrap argument on $d\mathfrak{G}= \eta^{q'} \mathfrak{G} - \mathfrak{G} \eta^q$ demonstrates that if $\eta \in C^{k,\alpha}$, then $\mathfrak{G} \in C^{k+1,\alpha}$ with bounds scaling linearly with $\delta$. 
\end{proof}
From the Gauss equations on a fiber $F_i^{-1}(x)$, we have 
\begin{align}
\Vert Rm(F_i^{-1}(x)) \Vert_{C^k} \leq C \Big(\Vert Rm(g_i) \Vert_{C_k}  + \Vert A_{F_i^{-1}(x)} * A_{F_i^{-1}(x)} \Vert_{C^k} \Big)
\end{align}
From Theorem~\ref{t.cfg}, Corollary~\ref{c.fibration_regularity}, and Theorem~\ref{t.ruh}, the conclusions of Theorem~\ref{t.ruh} hold on each fiber.
Next, we study the finite regularity of these fiber affine structures in the base variables, based on the results of \cite{CFG}. 
\begin{theorem}[\cite{CFG}]\label{t.CFG_connection_regularity}
Let $\{(M^n_i,g_i)\}$, $(X^d,g_\infty)$ be defined as in Theorem~\ref{t.cfg}. Assume that:
\begin{align}
        \Vert\operatorname{Rm}(g_i)\Vert_{C^{k}} \leq C_k,
\end{align}
for some $k\geq 0$.
Let $U\subseteq X$ be a coordinate neighborhood on $X$. Then, for $i$ sufficiently large, there exists a family of flat connections, $\nabla^x_i$, $x\in U$, on the fiber, $Z^x_i = F_i^{-1}(x)$, $x\in U$ with the following properties:
\begin{enumerate}
    \item[(1)] $\nabla^x_i$ is uniformly $C^{k+1,\alpha}$ or $W^{k+2,p}$ only in the base variable $x$;
    \item[(2)] For each fiber $Z^x_i$, the difference of the flat connection and the induced Levi-Civita  connection $\nabla^{LC}_i$ on the fiber has the following estimate:
    \begin{align}
        \Vert \nabla^x_i -\nabla^{LC}_i\Vert_{C^{k,\alpha}}\leq C(n, C_k, X)\cdot \tau_i
    \end{align}
    for any $0<\alpha<1$, or
    \begin{align}
        \Vert \nabla^x_i -\nabla^{LC}_i\Vert_{W^{k+1,p}}\leq C(n, C_{k,p}, X)\cdot \tau_i
    \end{align}
    for any $p > n$;
      \item[(3)] There exists $\Gamma$, $\mathcal{N}$ (not depending on $x$), such that for each fiber $Z^x_i$, there exists an affine isomorphism (cf. Definition~\ref{d.affine structure})
    \begin{align}
        \psi_x : \Gamma\backslash \mathcal{N} \to Z^x_i
    \end{align}
    with uniform $C^{k+2,\alpha}$ or $W^{k+3,p}$ estimate, respectively. Moreover, it is uniformly $C^{k+1,\alpha}$ or $W^{k+2,p}$ in base variable $x$.
\end{enumerate}

\end{theorem}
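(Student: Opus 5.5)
The plan is to follow the CFG construction fiberwise and then track how everything depends on the base point. First, fix a coordinate neighborhood $U\subseteq X$, taken small and simply connected. For each $x\in U$ the fiber $Z^x_i=F_i^{-1}(x)$ carries the induced metric. By the Gauss equation displayed above, together with Corollary~\ref{c.fibration_regularity}, it satisfies $\Vert \operatorname{Rm}(Z^x_i)\Vert_{C^k}\le C(n,C_k,X)$. Since $F_i$ is a $\tau_i$-GH approximation, its diameter is bounded by $C\tau_i\le\epsilon(n)$ for $i$ large. Theorem~\ref{t.ruh} therefore applies to every fiber with $\delta\le C\tau_i$, and gives a flat connection $\nabla^{z}$ with parallel torsion. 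Its estimates $\Vert\nabla^{LC}_i-\nabla^z\Vert_{C^{k,\alpha}}\le C\tau_i$ and $\Vert\nabla^{LC}_i-\nabla^z\Vert_{W^{k+1,p}}\le C\tau_i$ already yield property (2) once a base point $z=z(x)$ is chosen on each fiber.

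To get regularity in $x$, I would make the choice coherent. Pick a section $s:U\to F_i^{-1}(U)$ with $C^{k+2,\alpha}$ (respectively $W^{k+3,p}$) bounds; for instance, take the inverse image of a point under a local trivialization built from the horizontal flow of $F_i$. Then set $\nabla^x_i:=\nabla^{s(x)}$. Next, trivialize $F_i^{-1}(U)\cong U\times Z$ by the gradient-like flow along horizontal lifts of coordinate fields. This flow is $C^{k+1,\alpha}$ (respectively $W^{k+2,p}$), because the horizontal distribution is determined by $dF_i$ and loses one derivative from $F_i$. Pulled back to the fixed $Z$, the fiber metrics $h_x$ form a family that is $C^{k+1,\alpha}$/$W^{k+2,p}$ in $x$.

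Ruh's solution $\tilde\eta^{z}$ of \eqref{Ruheqn} is obtained by a contraction/implicit function argument. That argument is carried out in the subspace co-closed and $L^2$-orthogonal to harmonic forms, where the linearized operator $d+d^*$ is invertible with uniform bounds. By the implicit function theorem with parameters, the solution depends on the data $(h_x,s(x))$ with the same regularity as the data. This gives property (1): $\nabla^x_i$ is $C^{k+1,\alpha}$ or $W^{k+2,p}$ in $x$.

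For property (3), the developing map of the flat connection with parallel torsion $\nabla^x_i$ identifies the universal cover of $Z^x_i$ affinely with $\mathcal{N}_x$. The holonomy gives $\Gamma_x\subset \mathcal{N}_x\rtimes\operatorname{Aut}(\mathcal{N}_x)$. The torsion tensor at $s(x)$ determines the Lie algebra structure. Since it varies continuously in $x$ and nilpotent structures with a lattice are rigid (the lattice is discrete, the torsion is integral on $\Lambda$ after normalization), $(\mathcal N,\Gamma)$ is locally constant, hence constant on the connected set $U$. Concretely, I would use the gauge transformations $g^{q,q'}$ of Theorem~\ref{t.ruh}(1) to conjugate $\nabla^x$ to $\nabla^{x_0}$, giving $\psi_x$. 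The map $\psi_x$ is obtained by integrating a $W^{k+1,p}$ (or $C^{k,\alpha}$) connection form as a developing map, so it gains one derivative along the fiber, giving $C^{k+2,\alpha}$/$W^{k+3,p}$. In the base variable it inherits the regularity of $\nabla^x_i$.

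The main obstacle is the parameter dependence in step two. Ruh's construction as stated is pointwise in $z$, so one must check two things. First, the nonlinear map to which the implicit function theorem is applied is $C^1$ between the right Banach spaces, uniformly in $x$, which needs $p>n$ for the quadratic term $[\eta,\eta]$. Second, the harmonic projection varies with the correct regularity: the space of harmonic 1-forms on $(Z,h_x)$ has constant dimension $b_1(Z)$ and moves smoothly. I would first try to handle this by rescaling by $\tau_i^{-2}$ and working on the fixed model $Z$ with a smooth reference metric, so that all operators vary with $h_x$ in the stated norms.
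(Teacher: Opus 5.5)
Your argument for (2) is fine and agrees with the paper: you apply Theorem~\ref{t.ruh} on each fiber with $\delta\le C\tau_i$.

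\textbf{Regularity in $x$ (property (1)).} This is where the gap is. You assert that Ruh's $\tilde\eta^z$ comes from an implicit function theorem whose linearization $d+d^*$ is invertible on co-closed forms $L^2$-orthogonal to harmonic forms. From this you conclude that it depends on $(h_x,s(x))$ as regularly as the data. But on co-closed forms $d+d^*$ is just $d$, and its range is a proper closed subspace of the $\mathfrak{o}(n)$-valued $2$-forms.
\begin{itemize}
\item The flatness equation $d\tilde\eta+[\tilde\eta,\tilde\eta]=\operatorname{Rm}$ is therefore overdetermined. Its solvability is an integrability question (controlled via the Bianchi identity), not the inversion of a linear isomorphism. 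The standard implicit function theorem does not apply as you set it up.
\item The parallel-torsion condition is not encoded in that equation at all.
\item Your setup is internally inconsistent. A unique solution in a slice defined by $h_x$ alone could not depend on the base point $z$. Yet you need that dependence both to define $\nabla^{s(x)}$ and to invoke the $g^{q,q'}$. You never say where $s(x)$ enters.
\end{itemize}
So property (1) is asserted rather than derived. The paper avoids the base-point issue differently. Following \cite{CFG}, it averages Ruh's connections $g^{q,q'}\nabla^{q'}$ over the fiber against the invariant measure. The resulting connection depends only on the Riemannian structure of the fiber, and the paper reads off its $x$-regularity from that of the metric. This choice-independence also matters later, when local structures are patched in Theorem~\ref{t.CFG_existence_group_action}.

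\textbf{Construction of $\psi_x$ (property (3)).} The gauge transformations $g^{q,q'}$ of Theorem~\ref{t.ruh}(1) compare the connections based at two points of the same fiber, for the same fiber metric. They are bundle automorphisms of $FZ$ covering the identity, not diffeomorphisms. So they can neither relate $\nabla^x_i$ to $\nabla^{x_0}_i$ on a different fiber nor produce an affine isomorphism $\psi_x$.

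The missing step is the paper's route:
\begin{itemize}
\item Identify $\pi_1(Z^x_i)$ across the simply connected $U$ through the trivialization.
\item Apply Malcev's theorem to fix a single pair $(\Gamma,\mathcal N)$. This replaces your heuristic that ``the torsion is integral on $\Lambda$''.
\item Define $\psi_x$ as the descended developing map of $\nabla^x_i$, normalized at a frame chosen regularly in $x$.
\end{itemize}
You would still need to check that the affine conjugation carrying the monodromy of $\nabla^x_i$ onto the fixed $\Gamma$ can be chosen with the claimed regularity in $x$.
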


\begin{proof}
Parts (1) and (2) follow from the Cheeger-Fukaya-Gromov construction of the local invariant pure nilpotent structures \cite[Section~4]{CFG}. The bounded curvature condition $\Vert\operatorname{Rm}(g_i)\Vert_{C^{k}} \leq C_k$ ensures that, locally, the geometry is controlled in $C^{k+1,\alpha}$ or $W^{k+2,p}$. The affine connection $\nabla^x_i$ is constructed as in \cite{CFG} by an averaging process of the construction in Theorem~\ref{t.ruh} in the $z$-variable, which involves the integration of the $g^{q,q'} \nabla^{q'}$ against the invariant measure. It has $C^{k+1,\alpha}$ or $W^{k+2,p}$ regularity, since it only depends on the underlying Riemannian structure. The difference tensor $S_i = \nabla^x_i - \nabla_i^{LC}$ satisfies the elliptic estimates bounded by the scale of the collapse $\tau_i$, yielding the uniform bounds in $C^{k,\alpha}$ (or $W^{k+1,p}$) on each fiber, which gives Part~(2). Since we are averaging with respect to the Riemannian volume form, the connection $\nabla^x_i$ itself has $C^{k+1,\alpha}$ (or $W^{k+2,p}$) regularity in the parameter $x$, which gives us Part~(1). 

For part (3), we must construct the affine isomorphism $\psi_x : \Gamma\backslash \mathcal{N} \to Z^x_i$ and establish its regularity.
First, we need to apply Malcev's Theorem. For any simply connected open subset $U\subseteq X$, by choosing a local trivialization, consider $F_i^{-1}(U)\simeq U\times \Gamma\backslash\mathcal{N}$ and $\tilde{F_i^{-1}(U)}\simeq U\times\mathcal{N}$. This induces a family of homomorphism from $\pi_1(Z_i^x)$ to $\Gamma$. As in \cite{CFG}, By Malcev's Theorem, any finitely generated torsion-free nilpotent group $\Gamma$ uniquely embeds as a uniform (cocompact) lattice in a unique simply connected nilpotent Lie group $\mathcal{N}$. The real form of $\mathcal{N}$ is uniquely determined by the rational completion of $\Gamma$. This gives us our standard model space $\Gamma\backslash\mathcal{N}$. 

As in \cite{CFG}, the connection $\nabla^x_i$ on $Z^x_i$ constructed in part (1) is flat and has parallel torsion. Therefore, $Z^x_i$ is an affinely flat manifold. We consider the universal cover $\tilde{Z}^x_i$, equipped with the pullback connection $\tilde{\nabla}^x_i$. Because $\tilde{\nabla}^x_i$ is flat and has parallel torsion, we can construct a developing map $D_x : \tilde{Z}^x_i \to \mathcal{N}$. By the \cite{CFG}, the holonomy group of $\nabla^x_i$ is trivial, and the monodromy representation maps $\pi_1(Z^x_i) = \Gamma$ into the affine group $\mathcal{N}_L \rtimes \operatorname{Aut}(\mathcal{N})$. The developing map is an equivariant affine diffeomorphism between $\tilde{Z}^x_i$ and $\mathcal{N}$
hence this map descends to the quotient by the action of $\Gamma$
\begin{align}
    \psi_x : \Gamma\backslash \mathcal{N} \to Z^x_i.
\end{align}

The regularity of the map $\psi_x$ is determined by the regularity of the affine connection $\nabla^x_i$ from which the developing map $D_x$ is constructed. The developing map is essentially obtained by fixing a frame and integrating the parallel transport equation. In particular, it solves
\begin{align}
    \frac{\partial^2 (\psi_x)^a}{\partial u^b \partial u^c} + (\Gamma_i^x)_{bc}^k \frac{\partial (\psi_x)^a}{\partial u^k} = 0
\end{align}
where $u$ denotes the local coordinates on $\Gamma\backslash\mathcal{N}$. It immediately follows that if the connection $\nabla^x_i$ is in $C^{k,\alpha}$, then $\psi_x$ is uniformly bounded in $C^{k+2,\alpha}$ with respect to the fiber variables.
Similarly, if $\nabla^x_i \in W^{k+1,p}$, the resulting map $\psi_x$ is in $W^{k+3,p}$.

Finally, the dependence of $\psi_x$ on the base point $x \in U$ is determined by the variation of the connections $\nabla^x_i$ with respect to $x$, which is of $C^{k+1,\alpha}$ (or $W^{k+2,p}$).
\end{proof}

\section{Estimates on the invariant metrics}
\label{s:estinv}

In this section, we discuss nilpotent group actions, give estimates on the invariant metrics, discuss the scalar curvature as a distribution, and also discuss some properties of limiting measure and the drift Laplacian on the base. 

\subsection{Nilpotent group action and invariant metric}
Not only is there a fibration structure, there is a certain type of group action on the collapsing sequence, which we define next.

\begin{definition}[Nilpotent Group action]\label{d:nil_action}
    Let $U\subseteq X$ be a simply connected neighborhood of $x\in X$. A \textit{local Nilpotent Group action (local $\mathcal{N}$-action)} on $\tilde{F_i^{-1}(U)}$ is defined by choosing a local trivialization: $\Phi: F_i^{-1}(U)\to U\times \Gamma\backslash\mathcal{N}$, which lifts to the universal cover to $\tilde{\Phi}: \tilde{F_i^{-1}(U)} \rightarrow  U\times \mathcal{N}$, on which the action is the pull-back of the action generated by the right invariant vector fields on $\mathcal{N}$. A \textit{global Nilpotent Group action ($\mathcal{N}$-action)} on $M_i$ is a covering of $X$, $\{U_{j}\}$, where $U_j$ is simply connected,
    together with local $\mathcal{N}$-actions on $\tilde{F_i^{-1}(U_{j})}$ such that on overlaps the actions are conjugate by an affine isomorphism of $\mathcal{N}$.
\end{definition}
Note that, in general, the existence of local nilpotent fibration does not guarantee the existence of the nilpotent group action. We need to show that first the local nilpotent group action exists and then we need to show the transition map will reduce to the affine isomorphism so that the local group action will be preserved.
\begin{theorem}[\cite{CFG}]\label{t.CFG_existence_group_action}
Under the conditions of Theorem~\ref{t.cfg}, the structure group of the fibration reduced to $\operatorname{Aff}^0(\Gamma\backslash\mathcal{N})\cong C_{\operatorname{Aff}(\mathcal{N})}(\Lambda)/\Lambda\subset \operatorname{Aff}(\Gamma\backslash \mathcal{N})$. Thus there exists a global $\mathcal{N}$-action on $M_i$ for $i$ sufficiently large.
\end{theorem}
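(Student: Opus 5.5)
We will follow the strategy of \cite[Sections~4--5]{CFG}. The fibration $F_i$ comes from Theorem~\ref{t.cfg}, and its fiberwise affine structures from Theorem~\ref{t.CFG_connection_regularity}. The first task is to show that the transition functions of $F_i$ can be chosen to be affine maps of the fiber $\Gamma\backslash\mathcal{N}$. The second is to show that the structure group is in fact connected, so that it lies in $\operatorname{Aff}^0(\Gamma\backslash\mathcal{N})$. Proposition~\ref{p:aff} identifies this group with $C_{\operatorname{Aff}(\mathcal{N})}(\Gamma)/\Gamma$, and Remark~\ref{r:NR} places it inside $\mathcal{N}_R$, which acts trivially on right-invariant vector fields. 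The local $\mathcal{N}$-actions generated by right-invariant vector fields (Definition~\ref{d:nil_action}) then patch into a global $\mathcal{N}$-action.

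\textbf{Step 1: affine local trivializations.} Cover $X$ by finitely many simply connected coordinate balls $U_j$. On each one, Theorem~\ref{t.CFG_connection_regularity}(3) gives a family of affine isomorphisms $\psi_x:\Gamma\backslash\mathcal{N}\to Z^x_i$, depending regularly on $x$. We use it to define $\Phi_j^{-1}(x,u)=\psi^j_x(u)$. On an overlap $U_j\cap U_k$, the transition map is $\psi^{k}_x{}^{-1}\circ\psi^j_x$. For each fixed $x$, both maps are affine with respect to the same connection $\nabla^x_i$, because that connection is canonical: the CFG averaging depends only on $g_i$ and not on the chart. Hence the transition map lies in $\operatorname{Aff}(\Gamma\backslash\mathcal{N})\cong N_{\operatorname{Aff}(\mathcal{N})}(\Gamma)/\Gamma$, and it varies continuously in $x$.

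\textbf{Step 2: reduction to the identity component.} By Proposition~\ref{p:aff}, the quotient $N_{\operatorname{Aff}(\mathcal{N})}(\Gamma)/C_{\operatorname{Aff}(\mathcal{N})}(\Gamma)$ is discrete, since it embeds in the automorphisms of the finitely generated group $\Gamma$. A continuous family of transitions over a connected overlap therefore has constant class in this quotient. Our plan is to show that these classes are trivial, after possibly composing the $\psi^j$ with fixed elements of $N(\Gamma)$.

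We carry this out in two parts. First, the $\psi_x$ are determined by the developing map only up to a choice of frame and base point, and we exploit that freedom to normalize them. Second, the lift of $\psi^{k}_x{}^{-1}\circ\psi^j_x$ to $\mathcal{N}$ is an affine map that is $C\tau_i$-close to a left translation. The reason is that both charts are built from Ruh connections, and these are $C\delta$-gauge-equivalent to each other by \eqref{gqc0}. A normalizing element that is close to the identity modulo translations, and that induces a finite-order automorphism of $\Gamma$ (the index bound $w_0$ in Theorem~\ref{t.ruh}), must act trivially on $\Lambda$; this is the ``Margulis-type'' rigidity step. This gives centralizing transitions, i.e. structure group $C_{\operatorname{Aff}(\mathcal{N})}(\Lambda)/\Lambda$, as stated.

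\textbf{Step 3: the global action.} Lift the trivializations to $\tilde{F_i^{-1}(U_j)}\simeq U_j\times\mathcal{N}$. Left-translation components of the transitions commute with the right $\mathcal{N}$-action, and the $\operatorname{Aut}(\mathcal{N})$ components conjugate it. Consequently the actions generated by right-invariant vector fields agree on overlaps up to an affine isomorphism of $\mathcal{N}$, as Definition~\ref{d:nil_action} requires.

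The main obstacle is Step 2, specifically the rigidity claim that an element of $N(\Gamma)$ which is almost a translation must centralize $\Lambda$. We would first try the following argument. Conjugation by the transition gives an automorphism of the lattice $\Lambda$ that moves the short generators of $\Lambda$ by at most $C\tau_i$ times their length. Discreteness of $\Lambda$, with the relevant minimum spacing guaranteed by the injectivity-radius/diameter comparison on the rescaled fiber, then forces that automorphism to be the identity once $i$ is large.
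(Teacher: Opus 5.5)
Your Step~2 is where the argument breaks. It cannot be repaired, because the rigidity it aims for is false.

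The locally constant classes you extract in the discrete group $N_{\operatorname{Aff}(\mathcal N)}(\Gamma)/C_{\operatorname{Aff}(\mathcal N)}(\Gamma)$ assemble into the monodromy of the fibration. This is a homomorphism out of $\pi_1(X)$ and a topological invariant of $F_i$. No renormalization of the charts removes it, and it does not shrink as $\tau_i\to 0$.

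Here is a concrete counterexample. Let $M$ be the mapping torus of a hyperbolic $A\in SL(2,\mathbb Z)$ acting on $T^2$. In eigen-coordinates of $A$, the metrics $\epsilon^2(e^{2t}dx^2+e^{-2t}dy^2)+dt^2$ descend to $M$. The map $(x,y,t)\mapsto(\epsilon x,\epsilon y,t)$ makes them isometric to the Sol metric, so they have bounded curvature and diameter and collapse to a circle as $\epsilon\to 0$. A $T^2$-bundle over $S^1$ with connected structure group is trivial, but $\pi_1(M)$ is non-abelian. Hence no reduction to $\operatorname{Aff}^0(T^2)=T^2$ exists under the hypotheses of Theorem~\ref{t.cfg}.

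The specific false step is your claim that $(\psi^k_x)^{-1}\circ\psi^j_x$ is $C\tau_i$-close to a translation.
\begin{itemize}
\item The estimate \eqref{gqc0} compares Ruh connections at two points of one fiber. It says nothing about the identifications of $\pi_1(Z^x_i)$ with $\Gamma$ that the two charts inherit by transport through $U_j$ and $U_k$.
\item The linear part of the transition is exactly the automorphism relating these two markings; in the example it is $A$, which has infinite order.
\item The constant $w_0$ bounds $[\Gamma:\Lambda]$, not the order of this automorphism.
\item Discreteness of $\Lambda$ does not help either. The transition only intertwines the two pulled-back fiber metrics, which need not be close in model coordinates, so nothing forces it to nearly fix short elements of $\Lambda$.
\end{itemize}

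What survives is what the paper's proof actually uses. The global $\mathcal N$-action in the sense of Definition~\ref{d:nil_action} needs only your Steps~1 and~3. A lift $y\mapsto a\phi(y)$ of an affine transition conjugates the left-translation action to itself through the automorphism $g\mapsto a\phi(g)a^{-1}$, and that is precisely the overlap condition.

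The paper works this way. Over one simply connected $U$, the $\pi_1$-marking is fixed and Malcev rigidity determines the fiberwise affine isomorphisms. Two trivializations with that marking differ by a section of $\operatorname{Aff}^0(\Gamma\backslash\mathcal N)$, whose lifts lie in $\mathcal N_R$ and preserve right-invariant fields (Remark~\ref{r:NR}). Across overlaps, the paper invokes only affine conjugacy.

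So drop the rigidity step. Read the reduction to $\operatorname{Aff}^0$ as valid over simply connected sets, or globally only when the monodromy of $\pi_1(X)$ into the component group of $\operatorname{Aff}(\Gamma\backslash\mathcal N)$ is trivial. In general one only gets the full affine group.
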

\begin{proof}
We first show that for any simply connected open set $U\subseteq X$, there exists a Nilpotent group action on $\tilde{F_i^{-1}(U)}$, for all $i$ sufficiently large, which is independent of the choice of the local trivialization. 
We first choose any local trivialization
$\Phi: F_i^{-1}(U)\to U\times \Gamma\backslash\mathcal{N}$, which lifts to the universal cover to $\tilde{\Phi}: \tilde{F_i^{-1}(U)} \rightarrow  U\times \mathcal{N}$. Fix $x\in U$. By the proof of Theorem~\ref{t.CFG_connection_regularity}, for each $y\in U$, the isomorphism of the first fundamental group 
of $Z_i^y$ with $Z_i^x$ induces a unique affine isomorphism
\begin{align}
    \tilde{\Phi}_y : \tilde{Z_i^x} \to\tilde{Z_i^y}.
\end{align} 
As argued in Section 4 of \cite{CFG}, any right invariant vector field on $\mathcal{N}$ is then viewed as a vector field on $\tilde{Z_i^x}$ that
 can be extended to $\tilde{F_i^{-1}(U)}$ by pushing forward to each $y$ fiber using $\tilde{\Phi}_y$. Thus, we have defined a $\mathcal{N}$-action on $\tilde{F_i^{-1}(U)}$ using $\Phi$.

As in the Section 4 of \cite{CFG}, the $\mathcal{N}$-action defined above is independent of local trivialization, since the local trivialization is defined unique up to a section of affine transformations, $\sigma: U \rightarrow \operatorname{Aff}^0(\Gamma \backslash \mathcal{N}) \cong C_{\operatorname{Aff}(\mathcal{N})}(\Gamma)/\Gamma$ (from Proposition~\ref{p:aff}).
Since the latter group is a subgroup of $N_R$, it acts trivially on right invariant vector fields on each fiber; see Remark~\ref{r:NR}. Also, on the overlap of $U$ and $U'$, we can apply Malcev's Theorem and the argument in the proof of Theorem~\ref{t.CFG_connection_regularity} and the above argument to conclude the existence of a well-defined global $\mathcal{N}$-action.
\end{proof}
By \textit{right-invariant vector field} we mean the pull-back of a constant right invariant vector on $\mathcal{N}$ under a local trivialization. This is independent of the local trivialization, so gives a well-defined global vector field on $M_i$.
Now, we will continue our finite regularity discussion. We will continue to assume the sequence satisfies the following assumptions.
\begin{assumption}\label{assumption: sequence assumption}
    Let $\{(M^n_i ,g_i)\}$ be a sequence of closed $n$-dimensional Riemannian manifolds such that:
\begin{itemize}
    \item The Riemannian curvature is uniformly bounded: \begin{align}
        \Vert\operatorname{Rm}(g_i)\Vert_{C^{k}} \leq C_k,
\end{align}
for some $k\geq 0$, 
    \item The diameter is bounded from above: $d(M_i, g_i)\leq C$.
\end{itemize}
\end{assumption}
We can prove the following finite regularity lemma for local trivializations.
\begin{lemma}\label{l.local_trivialization_regularity}
    Let $\{(M_i,g_i)\}$ and $(X,g_\infty)$ satisfy the Assumption~\ref{assumption: sequence assumption}. Then, for each small simply connected neighborhood $U\subseteq X$, there exists a local trivialization $\Phi_{U,i}: F_i^{-1}(U)\to U\times \Gamma\backslash \mathcal{N}$ with uniform $C^{k+2,\alpha}$, for any $0 < \alpha < 1$ and $W^{k+3,p}$ estimate for $p > n$.
\end{lemma}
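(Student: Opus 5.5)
The plan is to build the trivialization directly from the fiberwise affine isomorphisms of Theorem~\ref{t.CFG_connection_regularity}(3). Fix a small simply connected coordinate neighborhood $U\subseteq X$ and a base point $x_0\in U$. For each $x\in U$ there is an affine isomorphism $\psi_x:\Gamma\backslash\mathcal{N}\to Z^x_i=F_i^{-1}(x)$. The isomorphisms $\pi_1(Z^x_i)\cong\pi_1(Z^{x_0}_i)$ coming from the fibration over the simply connected $U$ pin down the choice of $\psi_x$ up to $\operatorname{Aff}^0(\Gamma\backslash\mathcal{N})$, as in the proof of Theorem~\ref{t.CFG_existence_group_action}. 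We normalize this ambiguity by requiring that $\psi_x$ send the identity coset to a chosen section $s(x)\in Z^x_i$. For $s$ we take a $C^{k+2,\alpha}$ (resp.\ $W^{k+3,p}$) local section of $F_i$, obtained via the implicit function theorem from the horizontal lift of coordinate lines. Its regularity is controlled by Corollary~\ref{c.fibration_regularity}, which bounds $\|F_i\|_{C^{k+2,\alpha}}$ and $\|F_i\|_{W^{k+3,p}}$ uniformly.

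With this normalization we define
\begin{align}
\Psi_{U,i}:U\times\Gamma\backslash\mathcal{N}\to F_i^{-1}(U),\qquad \Psi_{U,i}(x,u)=\psi_x(u),
\end{align}
and set $\Phi_{U,i}=\Psi_{U,i}^{-1}$. Smoothness and invertibility follow because $\Psi_{U,i}$ restricts to a diffeomorphism on each fiber and commutes with the projections. Its inverse is therefore bounded in the same norms once we have a uniform lower bound on the Jacobian. That lower bound comes from the almost-Riemannian submersion property in Theorem~\ref{t.cfg}, together with $|d\psi_x|$ being close to an isometry, which follows from the $\tau_i$-closeness of $\nabla^x_i$ to $\nabla^{LC}_i$ in Theorem~\ref{t.CFG_connection_regularity}(2).

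The regularity estimate is then a mixed-derivative bookkeeping argument. By Theorem~\ref{t.CFG_connection_regularity}(3), $\psi_x$ is uniformly $C^{k+2,\alpha}$ in the fiber variable $u$. The developing-map equation
\begin{align}
\partial_b\partial_c\psi^a_x+(\Gamma^x_i)^m_{bc}\partial_m\psi^a_x=0
\end{align}
is linear in $\psi_x$, has coefficients depending on $x$, and has initial data $s(x)$ and a parallel frame at $s(x)$. Differentiating it in $x$ shows that $\partial_x^\beta\psi_x$ solves the same ODE system along fiber geodesics, with source terms of the form $\partial_x^\beta\Gamma^x_i$. Hence the joint regularity of $\Psi_{U,i}$ is the minimum of the fiber regularity and the base regularity of $(\Gamma^x_i,s)$.

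The main obstacle is this base regularity. Theorem~\ref{t.CFG_connection_regularity}(1) only gives $\nabla^x_i\in C^{k+1,\alpha}$ (resp.\ $W^{k+2,p}$) in $x$, which is one derivative short of the $C^{k+2,\alpha}$ (resp.\ $W^{k+3,p}$) we want. My first attempt will be to observe that $\psi_x$ is obtained by integrating $\nabla^x_i$ once more, so the ODE solution gains one derivative in every variable jointly, and the data $F_i$ and $s$ already carry $C^{k+2,\alpha}$ / $W^{k+3,p}$ regularity. If that is not enough, I will instead express the $\mathcal{N}$-action as the flow of the right invariant vector fields. These vector fields are determined by $\nabla^x_i$ through the ODE $\nabla^{can}_{W^R}V^R=[V,W]^R$ of \eqref{eq:right invariant vf}, so they have the regularity of $\nabla^x_i$ plus one, and their flows starting from $s(x)$ give $\Psi_{U,i}$ with the claimed regularity. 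Finally, the Sobolev case for $p>n$ uses that $W^{k+2,p}$ is an algebra, so compositions and products along the way stay in the right spaces.
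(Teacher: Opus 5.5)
Your construction is the same as the paper's. The paper takes a local section $\sigma_i$ of $F_i$ from the implicit function theorem, with regularity from Corollary~\ref{c.fibration_regularity}. It then sets $\tilde{\Phi_{U,i}^{-1}}(x,g)=\tilde\psi_i(g)\cdot\tilde\sigma_i(x)$ using the $\mathcal{N}$-action of Theorem~\ref{t.CFG_existence_group_action}, and descends by $\Gamma$-invariance. Your normalization $\psi_x(\Gamma e)=s(x)$ amounts to the same thing.

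\textbf{The gap.} It is exactly the step you flag as the main obstacle, and neither of your fixes closes it.
\begin{itemize}
\item Integrating the developing-map equation gains derivatives only in the fiber variables you integrate along. The base point $x$ enters only as a parameter, through $\Gamma^x_i$ and the initial data at $s(x)$. A parameter-dependent ODE solution is no more regular in the parameter than its coefficients: $y'=a(x)y$, $y(0)=1$ gives $y=e^{ta(x)}$. Your own ``minimum'' sentence says exactly this, and it yields only $C^{k+1,\alpha}$ (resp.\ $W^{k+2,p}$) in $x$.
\item The fallback fails the same way. The relation $\nabla^{can}_{W^R}V^R=[V,W]^R$ holds fiber by fiber. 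Across fibers, the right-invariant fields are transported by affine identifications built from the $\nabla^y_i$, so they are only $C^{k+1,\alpha}$; this is what Corollary~\ref{c:rivfwr} records. The flow of a $C^{k+1,\alpha}$ field is in general no better than $C^{k+1,\alpha}$ in the initial point; the gain is only in the group (time) variable. For example, $V=a(x)\partial_u$ on $\mathbb{R}_x\times\mathbb{R}_u$ gives the equivariant trivialization $(x,t)\mapsto(x,ta(x))$, which is exactly as regular as $a$.
\end{itemize}
As written, then, you prove the lemma with one derivative less.

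\textbf{What the paper does instead.} It does not obtain the extra base derivative by any such integration. It takes the uniform $C^{k+2,\alpha}$ (resp.\ $W^{k+3,p}_{loc}$) bound on the action $\tilde\psi_i$ directly from Theorem~\ref{t.CFG_connection_regularity}(3) together with Theorem~\ref{t.CFG_existence_group_action}. The remark after the lemma attributes the absence of loss to identifying fibers through the Malcev/fundamental-group isomorphism rather than through a Riemannian family. To follow the paper, you need that regularity of the action itself as an input. It cannot be recovered by integrating the $C^{k+1,\alpha}$ base dependence of $\nabla^x_i$ once more, which is what both of your mechanisms attempt.

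\textbf{A smaller point.} Building $s$ from horizontal lifts of coordinate lines also costs a derivative, because $(\ker dF_i)^{\perp}$ is only as regular as $dF_i$ and $g_i$. Instead, choose a coordinate slice transverse to the fibers and apply the implicit function theorem to $F_i$ restricted to it. That gives the $C^{k+2,\alpha}$ (resp.\ $W^{k+3,p}$) section directly.
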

\begin{proof}
    With the assumption above, by Corollary~\ref{c.fibration_regularity}, the fibration map $F_i:M_i\to X$, satisfies uniform $\Vert F_i\Vert_{C^{k+2,\alpha}}$ or $\Vert F_i\Vert_{W^{k+3,p}}$ estimate. Let $U\subseteq X$ be a simply connected open subset. Since $F_i$ are submersions, by the implicit function theorem, there exists a local section $\sigma_i:U\to F_i^{-1}(U)\subseteq M_i $, so that $F_i\circ\sigma_i = \operatorname{Id}$ on $U$ with uniform $\Vert\sigma_i\Vert_{C^{k+2,\alpha}}$ or $\Vert\sigma_i\Vert_{W^{k+3,p}}$ estimate respectively.

    On the local universal cover, $\tilde{F_i^{-1}(U)}$, we have the fiber-wise affine isomorphism:
    \begin{align}
        \tilde{\psi}_x: \mathcal{N} \to \tilde{Z_i^x}.
    \end{align}
    With a little abuse of notation, we denote $\tilde{\psi}_x:\mathcal{N}_L \to \operatorname{Aut}(\tilde{Z_i^x})$ as the isomorphism of group action.
    By Theorem~\ref{t.CFG_existence_group_action}, the fiber-wise group action generated by the right invariant vector fields on $\mathcal{N}$ extends to a group action on $\tilde{F_i^{-1}(U)}$. For each fiber $\tilde{Z_i^{x'}}$, we have the map
    \begin{align}
        \phi_{x'}: \operatorname{Aut}(Z_i^x) \to \operatorname{Aut}(Z_i^{x'}).
    \end{align}
    Thus, we define the fiber-wise action on $x'$ by: $\psi_{x'}: = \phi_{x'}^{-1}\circ \tilde{\psi_x}$.
    which gives us the action as 
    \begin{align}
        \tilde{\psi}_i:\mathcal{N}_L &\to \operatorname{Aut}(\tilde{F_i^{-1}(U))}
    \end{align}
 with the property that $F_i(\tilde{\psi}_i(g)p) = F_i(p)$, for any $p\in M_i$, $g\in\mathcal{N}_L$. By (3) of Theorem~\ref{t.CFG_connection_regularity} and Theorem~\ref{t.CFG_existence_group_action}, the mapping $\tilde{\psi}_i$ has a uniform $C^{k+2,\alpha}$ or $W_{loc}^{k+3,p}$ estimate, respectively. 

 Thus, we can define our local trivialization
 \begin{align}
     \tilde{\Phi_{U,i}^{-1}}: U\times \mathcal{N} &\to \tilde{F_i^{-1}(U)}\\
     (x,g)&\mapsto \tilde{\psi}_i(g)\cdot\tilde{\sigma_i}(x),
 \end{align}
where $\tilde{\sigma_i}$ is a lift to $\tilde{F_i^{-1}(U)}$. Note that, by Theorem~\ref{t.CFG_existence_group_action}, $\tilde{\psi}_i$ is invariant under $\Gamma$. Thus, $\tilde{\Phi_{U,i}^{-1}}$ descends to $U\times \Gamma\backslash \mathcal{N}$, which is the desired local trivialization with regularity.
\end{proof}
\begin{remark}
    The construction of this local trivialization is different from the one from \cite{CFG}. In \cite{CFG}, they use the normal exponential map to construct a tubular neighborhood of the center fiber. In our case, we use the improved regularity given by the harmonic coordinate, which is optimal. Also, instead of using the family of affine isomorphism of the fiber using Riemannian structure, we directly use the fiber-wise isomorphism given by Malcev's Theorem, which does not lose regularity.
\end{remark}

Since the definition of right invariant vector field is independent of the choice of the local trivialization, we have the following regularity for right invariant vector fields.
\begin{corollary}
\label{c:rivfwr}
Let $\{(M_i,g_i)\}$ and $(X,g_\infty)$ satisfy the Assumption~\ref{assumption: sequence assumption}. Let $V$ be a right-invariant vector field on $\mathfrak{B} = \tilde{F_i^{-1}(U)}\cap B_{\tilde{x_i}}(2\tau_i)$. Then,
\begin{align}
\label{rivfwr1}
    \Vert V\Vert_{C^{k+1,\alpha}(\mathfrak{B})}\leq C(n,C_k,X)\cdot |V(\tilde{x}_i)|,
\end{align}
for any $0<\alpha<1$, or
\begin{align}
\label{rivfwr2}
    \Vert V\Vert_{W^{k+2,p}(\mathfrak{B})}\leq C(n,C_{k,p},X)\cdot |V(\tilde{x}_i)|,
\end{align}
for any $p > n$.
\end{corollary}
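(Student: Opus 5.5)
We work in the local trivialization $\tilde{\Phi}_{U,i}: \tilde{F_i^{-1}(U)} \to U\times\mathcal{N}$ of Lemma~\ref{l.local_trivialization_regularity}. In it, a right-invariant vector field is the pull-back of a constant right-invariant field $V^R$ on $\mathcal{N}$, with zero component in the $U$-direction. Since right-invariant fields are independent of the trivialization (Theorem~\ref{t.CFG_existence_group_action}), we may choose this one. Write $V = (\tilde{\Phi}_{U,i}^{-1})_*(0, V_0^R)$ for some $V_0\in\mathfrak{n}$. The proof then splits into two steps: a regularity bound for $V$ in terms of $|V_0|$, and a comparison of $|V_0|$ with $|V(\tilde x_i)|$.

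\textbf{Step 1.} The map $\tilde{\Phi}_{U,i}^{-1}(x,g)=\tilde\psi_i(g)\cdot\tilde\sigma_i(x)$ has uniform $C^{k+2,\alpha}$ and $W^{k+3,p}_{loc}$ bounds. Its differential therefore has $C^{k+1,\alpha}$ and $W^{k+2,p}$ bounds. On $\mathcal{N}$, the field $V_0^R$ is smooth and linear in $V_0$. Composing, we get
\begin{align}
\Vert V\Vert_{C^{k+1,\alpha}(\mathfrak{B})}\leq C\,|V_0|,
\end{align}
and the analogous $W^{k+2,p}$ bound. Here $\mathfrak{B}$ lies in a fixed compact region of the chart, because $\tau_i\to0$ and the trivialization is normalized at $\tilde x_i$.

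\textbf{Step 2.} It remains to show $|V_0|\leq C|V(\tilde x_i)|$, meaning the differential of $\tilde{\psi}_i$ restricted to the fiber is uniformly invertible at $\tilde x_i$. Here the scale matters. The fibers have diameter $\sim\tau_i$, so we normalize $\mathcal{N}$ at scale $\tau_i$, i.e. we measure $\mathfrak{B}$ in the rescaled metric $\tau_i^{-2}g_i$ as in Step 1 of Theorem~\ref{t.ruh}. In that scale the developing map $\psi_x$ is a $C\tau_i$-perturbation of an isometry-like affine map, by part (2) of Theorem~\ref{t.ruh} and Theorem~\ref{t.CFG_connection_regularity}. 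Consequently $d\psi_x(e)$ is bi-Lipschitz with uniform constants, and we can normalize $\tilde\psi_i$ so that $\tilde\sigma_i(x)$ corresponds to $e$ at $\tilde x_i$.

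This gives $|V(\tilde x_i)|\geq c|V_0|$, and combining with Step 1 yields \eqref{rivfwr1} and \eqref{rivfwr2}. Alternatively, one can argue via the ODE: by \eqref{eq:right invariant vf}, $\nabla^{can}V=$ a linear expression in $V$ with bounded structure constants, and $\nabla^{can}-\nabla^{LC}$ is controlled by Theorem~\ref{t.CFG_connection_regularity}(2). Then $|\nabla^{LC}V|\le C|V|$ along fibers, and Gronwall on the ball of radius $2\tau_i$ gives $\sup_{\mathfrak{B}}|V|\le C|V(\tilde x_i)|$. This $C^0$ control can be fed into Step 1 in place of $|V_0|$.

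\textbf{Main obstacle.} The expected difficulty is the scaling in Step 2. The structure constants of $\mathfrak{n}$, and hence the Gronwall constant, must be uniformly bounded in the normalization adapted to $\tau_i$. The derivative bounds in the base directions must also not degenerate relative to the fiber directions. The plan is to handle this by working consistently in the rescaled local universal cover, where the geometry converges in $C^{k+1,\alpha}$ and $W^{k+2,p}$ (Definition~\ref{d.local_convergence}). There the right-invariant fields of the limit nilpotent action are fixed smooth fields, and the uniform estimate follows by the convergence, with constants depending only on $n$, $C_k$ and $X$.
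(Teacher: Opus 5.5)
\textbf{There is a genuine gap, in Step 1.} The bound $\Vert V\Vert_{C^{k+1,\alpha}(\mathfrak{B})}\le C|V_0|$ does not follow from Lemma~\ref{l.local_trivialization_regularity}, and the rest of your plan does not repair it.

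The norm in \eqref{rivfwr1} is taken with respect to $g_i$ on $\mathfrak{B}$. To differentiate $V=(\tilde{\Phi}_{U,i}^{-1})_*(0,V_0^R)$ in those coordinates you must apply the chain rule through $\tilde{\Phi}_{U,i}$ itself. So you need uniform invertibility of $d\tilde{\Phi}_{U,i}^{-1}$ on all of $\mathfrak{B}$, not just bounds on $\tilde{\Phi}_{U,i}^{-1}$.

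With the model $\Gamma\backslash\mathcal{N}$ normalized independently of $i$, this fails, because the fibers have diameter at most $\tau_i$. Then $d\tilde{\Phi}_{U,i}^{-1}$ degenerates in the fiber directions, in general at different rates in different directions of $\mathfrak{n}$, and $|V_0|$ is not bounded by $C|V(\tilde{x}_i)|$.

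Your Step 2 renormalizes $\mathcal{N}$ to get invertibility at the single point $\tilde{x}_i$. After that renormalization, however, the lemma no longer supplies the uniform $C^{k+2,\alpha}$ and $W^{k+3,p}$ bounds Step 1 relies on. Proving them on $\mathfrak{B}$ amounts to proving the corollary.

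Passing to $\tau_i^{-2}g_i$ does not help either, since these norms are not scale invariant. Uniform bounds at scale $\tau_i$ translate back only to $|\nabla^j V|_{g_i}\le C\tau_i^{-j}|V(\tilde{x}_i)|_{g_i}$, which is far weaker than \eqref{rivfwr1}.

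The closing appeal to convergence of local universal covers is circular. Extracting a convergent subsequence of the fields already requires the a priori bounds you are trying to prove. Likewise, feeding your Gronwall $C^0$ bound ``into Step 1 in place of $|V_0|$'' runs into the same invertibility problem.

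\textbf{What is missing is to avoid the chart entirely and bootstrap the first-order linear system satisfied by $V$, as the paper does.} For $W$ tangent to a fiber, $\nabla_W V=(\nabla^x_i)_W V-S^x(W,V)+A^x(W,V)$, where $S^x=\nabla^x_i-\nabla^{LC}_i$. By \eqref{eq:right invariant vf}, $(\nabla^x_i)_W V$ is again a right-invariant field.

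The normalization issue you flag can be settled pointwise in $g_i$. Since $\nabla^{LC}_i$ is torsion-free, the torsion of $\nabla^x_i$ is $S^x(W,V)-S^x(V,W)$. On right-invariant fields this torsion is exactly the bracket term $(\nabla^x_i)_WV$. Hence $|(\nabla^x_i)_WV|\le 2|S^x|\,|W|\,|V|$, so $|\nabla V|\le C|V|$. Gronwall on the $2\tau_i$-ball then gives $\sup_{\mathfrak{B}}|V|\le C|V(\tilde{x}_i)|$ together with the $C^1$ bound.

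Next, differentiate the system. Because $\nabla^x_iV$ is right-invariant, $\nabla^jV$ is expressed through $(\nabla^{j-1}S^x)V$, $(\nabla^{j-1}A^x)V$ and lower-order terms. The $C^{k,\alpha}$ and $W^{k+1,p}$ bounds on $S^x$ (Theorem~\ref{t.CFG_connection_regularity}(2)) and on $A^x$ (Corollary~\ref{c.fibration_regularity}) then give \eqref{rivfwr1} and \eqref{rivfwr2} in terms of $\sup_{\mathfrak{B}}|V|$. No inverse of the trivialization is ever needed.
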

\begin{proof}
    Note, the right-invariant vector field on $\tilde{F_i^{-1}(U)}$ is given by the pull-back of the right-invariant vector field using $\tilde{\Phi_{U,i}}$. Then, the $C^0$ ($L^\infty$) norm of $V$ in both cases is uniformly bounded by $|V(\tilde{x}_i)|$, depending on the $C^0$ norm of the Riemannian metric. 
    
    Let $S^x:=\nabla_i^x - \nabla_i^{LC}$ on the fiber. Denote $A^x$ as the second fundamental form of the fiber of $F_i^{-1}(x)$, which we regard as a tensor on the universal cover $\tilde{F_i^{-1}(U)}$. By Theorem~\ref{t.cfg}, $\Vert A^x\Vert_{C^0}$ is uniformly bounded. Since $V$ is a right invariant vector field, by \eqref{eq:right invariant vf}, we have:
    \begin{align}
        \Vert \nabla V\Vert_{C^0} \leq\Vert S^xV\Vert_{C^0} +\Vert A^xV\Vert_{C^0} + \Vert \nabla^{x}_iV\Vert_{C^0} \leq C\Vert V\Vert_{C^0}.
    \end{align}
    By ODE comparison, we have the desired $C^{1}$ estimate. For the higher order estimate, since $\nabla^{x}_iV$ produces a constant right invariant vector field (recall \eqref{eq:right invariant vf}), we have
    \begin{align}
        \Vert \nabla^k V\Vert_{C^0} \leq\Vert (\nabla^{k-1}S^x)V\Vert_{C^0} +\Vert (\nabla^{k-1}A^x)V\Vert_{C^0}.
    \end{align}
   A similar estimate holds for the H\"older semi-norms. Thus, the estimate \eqref{rivfwr1} follows from Corollary~\ref{c.fibration_regularity} and Theorem~\ref{t.CFG_connection_regularity}. The proof of \eqref{rivfwr2} is similar; the details are omitted. 
\end{proof}

Now, we are ready to construct the $\mathcal{N}$-invariant metric using an averaging method of \cite{CFG}. For this, we first define what it means for a tensor to be $\mathcal{N}$-invariant.
\begin{definition}
\label{d:invariant}
    The tensor $T_i$ on $M_i$ is called \textit{$\mathcal{N}$-invariant} if for every $x\in X$, there is a simply connected neighborhood $U\subseteq X$ such that $F_i^{-1}(U)\simeq U\times \mathcal{N}/\Gamma$. Then on the universal cover $\tilde{F_i^{-1}(U)\simeq} U\times \mathcal{N}$, the pull-back tensor $\tilde{T}_i$ is invariant under $\mathcal{N}$-action.
\end{definition}
We next have the following result containing explicit finite regularity estimates on the invariant metric.
\begin{theorem}\label{t.invariant_nearby_metric_estimate}
Let $\{(M_i,g_i)\}$ and $(X,g_\infty)$ satisfy the Assumption~\ref{assumption: sequence assumption}. Then, for $i$ large enough, there exists a $\mathcal{N}$-invariant metric $\hat{g}_i$ on $M_i$ so that, for every $U\subseteq X$, the pull-back metric on $\tilde{F_i^{-1}(U)}$, still denoted as $\hat{g}_i$ and $g_i$, satisfies the estimate:
\begin{align}
    \Vert \hat{g}_i - g_i\Vert_{C^{k,\alpha}}\leq C(n,C_k,X)\cdot\tau_i
\end{align}
for any $0<\alpha<1$, and
\begin{align}
    \Vert \hat{g}_i - g_i\Vert_{W^{k+1,p}_{loc}(K)}\leq C(n,C_{k,p},X,K)\cdot \tau_i
\end{align}
for $p > n$, and for any compact set $K\subseteq \tilde{F_i^{-1}(U)}$.
\end{theorem}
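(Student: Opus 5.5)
The plan is to construct $\hat g_i$ by the Cheeger--Fukaya--Gromov averaging procedure, carried out locally on the universal covers, and then estimate the difference $\hat g_i-g_i$ using the finite-regularity results of Section~\ref{s:cfg} together with Corollary~\ref{c:rivfwr}.

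Fix a small simply connected $U\subseteq X$ and the local trivialization $\tilde\Phi_{U,i}$ of Lemma~\ref{l.local_trivialization_regularity}. On the fiber $\tilde Z_i^x\simeq\mathcal{N}$ the group $\Lambda=\Gamma\cap\mathcal{N}_L$ acts cocompactly, and $\mathcal{N}$ acts through the right-invariant vector fields of Theorem~\ref{t.CFG_existence_group_action}. The action of $\mathcal{N}$ on the fiber factors, modulo $\Lambda$, through the compact group $C_{\operatorname{Aff}(\mathcal{N})}(\Lambda)/\Lambda$. This group is a torus, since it is the quotient of the center-type right translations by the lattice. I would define
\begin{align}
\hat g_i=\int_{C_{\operatorname{Aff}(\mathcal{N})}(\Lambda)/\Lambda}\phi^*g_i\,d\phi ,
\end{align}
where $d\phi$ is normalized Haar measure. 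To get full $\mathcal{N}$-invariance on the universal cover, I would then average a second time over the fiberwise affine structure: express $g_i$ in the frame of right-invariant vector fields $\{V_a\}$ together with horizontal lifts of coordinate fields on $U$, and replace each coefficient $g_i(V_a,V_b)$, $g_i(V_a,H_j)$, $g_i(H_j,H_k)$ by its fiber average with respect to the invariant measure on $Z_i^x=\Gamma\backslash\mathcal{N}$. Because the right-invariant fields are globally defined (independent of trivialization) and the overlaps are affine, the resulting tensor is a globally defined $\mathcal{N}$-invariant metric. It is positive definite once the estimate below holds.

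For the estimate, write $g_i(V_a,V_b)(p)-\overline{g_i(V_a,V_b)}(x)$ as an integral over the fiber of the difference between the value at $p$ and the value at another point $q$ of the fiber. The fiber has diameter at most $C\tau_i$, since $F_i$ is a $\tau_i$-GH approximation and the fibers are almost flat. So it suffices to bound the fiber derivative $\nabla_W\big(g_i(V_a,V_b)\big)=g_i(\nabla_WV_a,V_b)+g_i(V_a,\nabla_WV_b)$ for $W$ tangent to the fiber. By \eqref{eq:right invariant vf}, $\nabla^x_iV$ is again right invariant, with structure constants. I would therefore normalize the frame so that these constants are $O(\tau_i)$; the nilpotent structure constants scale like the diameter under Ruh's rescaling. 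Combining this with the bound $|\nabla^x_i-\nabla^{LC}_i|\le C\tau_i$ from Theorem~\ref{t.CFG_connection_regularity}(2) and the second fundamental form bound \eqref{e:sff}, the fiber derivatives of the coefficients are bounded. Integrating across a fiber of length $O(\tau_i)$ then gives $C^0$ closeness $C\tau_i$. For the $C^{k,\alpha}$ and $W^{k+1,p}$ norms I would differentiate the same representation. Derivatives falling on the frame are controlled by Corollary~\ref{c:rivfwr}; derivatives falling on $g_i$ are controlled by harmonic-coordinate regularity, namely $C^{k+1,\alpha}$ and $W^{k+2,p}_{loc}$, which are available by Corollary~\ref{c.fibration_regularity}. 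Derivatives in the base direction pass under the fiber integral by the base regularity in Theorem~\ref{t.CFG_connection_regularity}(1),(3). The mean value argument then applies to a function whose one-higher derivative is bounded, which gives $\Vert\hat g_i-g_i\Vert\le C\tau_i$ in each norm. In the Sobolev case I would work on a compact $K$ and use Morrey's embedding for $p>n$ to perform the fiberwise interpolation, which is why the constant depends on $K$.

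The main obstacle is the loss of one derivative. The fiber-difference argument needs one more derivative of the coefficients than the norm being estimated, but the metric is only $C^{k+1,\alpha}$ or $W^{k+2,p}$ in harmonic coordinates. This is exactly why the statement is at the level $C^{k,\alpha}$ and $W^{k+1,p}$, and the bookkeeping must show that nothing further is lost in the frame and the trivialization. For the trivialization this is the content of Lemma~\ref{l.local_trivialization_regularity}. A second delicate point is that the estimate must be made uniform on the non-collapsed universal-cover scale, so that $\tau_i$ rather than a curvature-scale constant appears. There I would argue on the balls $\mathfrak{B}$ of radius $2\tau_i$ from Corollary~\ref{c:rivfwr}, where the $\Lambda$-orbit is $C\tau_i$-dense.
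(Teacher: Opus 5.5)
The gap is in the construction of $\hat g_i$, not in the estimates. In this paper the $\mathcal{N}$-action is the one generated by the right-invariant fields, i.e.\ left translation. Since $(L_k)_*V^R=(\mathrm{Ad}_kV)^R$, the generators are permuted, not fixed, both by the action and by the deck group $\Gamma\subseteq\mathcal{N}_L$.

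Suppose you force $\hat g_i(V_a,V_b)=c_{ab}$ to be constant along fibers. Then $(\mathcal{L}_{V_c}\hat g_i)(V_a,V_b)$ equals, up to sign, $c([V_c,V_a],V_b)+c(V_a,[V_c,V_b])$. This vanishes for all $a,b,c$ only if $c$ is $\operatorname{ad}$-invariant. A positive definite $\operatorname{ad}$-invariant form makes each $\operatorname{ad}_X$ skew and nilpotent, hence zero, which forces $\mathcal{N}$ to be abelian. So what you produce is a right-invariant metric on each fiber: it is invariant under right translations rather than under the $\mathcal{N}$-action. For the same reason it is not $\Gamma$-invariant, so it does not descend to $M_i$.

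Relatedly, $g_i(V_a,V_b)(\gamma y)=g_i(\mathrm{Ad}_{\gamma^{-1}}V_a,\mathrm{Ad}_{\gamma^{-1}}V_b)(y)$ is not $\Gamma$-periodic, so its ``fiber average over $\Gamma\backslash\mathcal{N}$'' is not even defined. Normalizing the structure constants to be $O(\tau_i)$ does not help, because invariance is an exact condition. The $g_i$-horizontal lifts $H_j$ have the same defect, since the action does not preserve them.

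Your preliminary averaging does not rescue this either. Every right translation $R_n=L_n\circ(x\mapsto n^{-1}xn)$ is affine and commutes with $\Lambda$. Hence $C_{\operatorname{Aff}(\mathcal{N})}(\Lambda)$ is not compact modulo $\Lambda$ unless $\mathcal{N}$ is abelian, and there is no normalized Haar measure to average with. Only its central part $Z(\mathcal{N})/(Z(\mathcal{N})\cap\Lambda)$ is a torus. In any case, averaging over right translations yields right-invariance, not $\mathcal{N}$-invariance.

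The remedy is to average in a frame that commutes with the generators. Use left-invariant fields $E_a$ along the fibers, which are $\Gamma$-invariant and satisfy $[V^R,E^L]=0$. Add the coordinate fields $\partial_{x^j}$ of the trivialization in Lemma~\ref{l.local_trivialization_regularity}, on which the action is $(x,g)\mapsto(x,kg)$. Since $(h^*g_i)(E_a,E_b)(y)=g_i(E_a,E_b)(hy)$, averaging coefficients in this frame is exactly the paper's $\hat g_i=\int_{\Gamma\backslash\mathcal{N}}h^*g_i\,d\mu(h)$. Its invariance follows from the right-invariance of $\mu$, which holds because $\mathcal{N}$ is unimodular.

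With that correction your estimate scheme is sound and is essentially the paper's. The paper writes $h^*g_i-g_i=\int_0^1\exp(tV_h)^*(\mathcal{L}_{V_h}g_i)\,dt$ with $|V_h(\tilde x_i)|\le C\tau_i$. It then gets $\|\nabla V_h\|\le C\tau_i$ in $C^{k,\alpha}$ or $W^{k+1,p}$ from Corollary~\ref{c:rivfwr}. This is your ``bounded fiber derivative times fiber diameter $\tau_i$'' argument, with the same derivative count.
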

\begin{proof}
    The construction follows the averaging method in the section 4 of \cite{CFG}. By Theorem~\ref{t.cfg}, $\Lambda = \mathcal{N}_L\cap \Gamma$ is normal in $\Gamma$ with uniform finite index, by passing to the finite cover, it is enough to consider the case $\Gamma\subseteq\mathcal{N}_L$. 
    
    On $\tilde{F_i^{-1}(U)}\simeq U\times \mathcal{N}$. Let $h\in \mathcal{N}_L$, then we regard $h\in \operatorname{Aut}(\tilde{F_i^{-1}(U)})$. Consider $h\mapsto h^*g_i$. Note that this map is constant on the $\Gamma$ orbit. Since the group $\mathcal{N}_L$ is nilpotent, it is unimodular. Therefore, the space $\Gamma\backslash \mathcal{N}_L$ admits a canonical left-invariant measure $d\mu$ of total volume 1. Thus, we construct the metric:
    \begin{align}
        \hat{g}_i = \int_{\Gamma\backslash\mathcal{N}} h^*g_i d\mu.
    \end{align}
    Thus, $\hat{g}_i$ is a left-invariant metric on $\tilde{F_i^{-1}(U)}$. 

    For each $h\in\mathcal{N}_L$, there exists a right invariant vector field $V_h$ so that $h = \operatorname{exp}(V_h)$. By the fundamental theorem of calculus, we have:
    \begin{align}
        h^*g_i - g_i = \int_0^1 \frac{d}{dt} \left(\operatorname{exp}^*(t V_h)g_i \right) dt = \int_0^1 (\operatorname{exp}^*(tV_h))(\mathcal{L}_{V_h}g_i) dt.
    \end{align}
    Thus, we have:
    \begin{align}
        \Vert \hat{g}_i - g_i\Vert \leq C_n\max_{h\in \Gamma\backslash\mathcal{N}}(\Vert D\operatorname{exp}(V_h)\Vert\cdot \Vert \nabla V_h\Vert),
    \end{align}
where the norm can be taken to be either the $C^{k,\alpha}$ norm or the above $W^{k+1,p}_{loc}$ norm. 
    Now by the collapsing assumption, the diameter of each fiber is less than $\tau_i$. For every $x\in U$, choose a lift $\tilde{x}_i$ on $\tilde{F_i^{-1}}(U)$. Then $|V_h(\tilde{x}_i)|\leq C\cdot \tau_i$. By the Jacobi field expansion (see, for example,\cite[Section~5]{LeeParker}), the $\ell$-th nilpotentcy of the group $\mathcal{N}$, and Theorem~\ref{t.CFG_connection_regularity}, 
    \begin{align}\Vert D\operatorname{exp}(V_h)\Vert = \Vert\sum_{n=0}^{\infty}\frac{(-1)^n}{(n+1)!}\operatorname{ad}_{V_h}^n \Vert \leq \Vert\sum_{n=0}^{\ell}\frac{(-1)^n}{(n+1)!}\operatorname{ad}_{V_h}^n \Vert\leq  \Vert(1+C\tau_i)\operatorname{Id}\Vert \leq C.
    \end{align}
    By Corollary~\ref{c:rivfwr}, $\Vert \nabla V_h\Vert\leq C\cdot \tau_i$, so we are done.
\end{proof}
\begin{remark} For $k = 0$, the curvature tensor of $\hat{g}_i$ is not necessarily close to the original $g_i$. This is because the averaging procedure of Cheeger-Fukaya-Gromov in the proof of Theorem~\ref{t.cfg} 
produces an affine connection which is close to Levi-Civita only in $C^{0,\alpha}$ or $W^{1,p}$ norms.
There might be some different way to do this averaging procedure which avoids this loss, but the authors are not aware of any method to improve this.
\end{remark}
As a corollary, we have the following uniform estimate Riemannian curvature of the invariant metric.
\begin{corollary}\label{c.invariant_metric_curvature_estimate}
With the Assumption~\ref{assumption: sequence assumption}. we have the following estimate on $\tilde{F_i^{-1}(U)}$:
\begin{enumerate}
    \item[(1)] In the H\"older case, if $k\geq 2$, then we have:
    \begin{align}
        \Vert \operatorname{Rm}(\hat{g}_i) - \operatorname{Rm}(g_i)\Vert_{C^{k-2,\alpha}} \leq C(n,C_k,X)\cdot\tau_i
    \end{align}
    for any $0<\alpha<1$.
    \item[(2)] In the Sobolev case, if $k\geq 1$ and $p > n$, then
    \begin{align}
        \Vert \operatorname{Rm}(\hat{g}_i) - \operatorname{Rm}(g_i)\Vert_{W^{k-1,p}_{loc}(K)} \leq C(n,C_{k,p},X,K)\cdot\tau_i
    \end{align}
    for any compact set $K\subseteq \tilde{F_i^{-1}(U)}$. 
    \end{enumerate}
\end{corollary}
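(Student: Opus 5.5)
The estimate follows from Theorem~\ref{t.invariant_nearby_metric_estimate} together with the local coordinate formula for the curvature tensor in terms of the metric. Work on $\tilde{F_i^{-1}(U)}$ in the harmonic coordinates used in Corollary~\ref{c.fibration_regularity}. In these coordinates $g_i$ is uniformly controlled in $C^{k+1,\alpha}$ and $W^{k+2,p}_{loc}$, since $\Vert \operatorname{Rm}(g_i)\Vert_{C^k}\le C_k$. In particular $g_i$ and $g_i^{-1}$ are uniformly bounded there.

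The curvature has the schematic form
\begin{align}
\operatorname{Rm}(g) = g^{-1} * \partial^2 g + g^{-1}*g^{-1}*\partial g*\partial g.
\end{align}
Subtracting this expression for $g_i$ from the one for $\hat g_i$ gives
\begin{align}
\operatorname{Rm}(\hat g_i)-\operatorname{Rm}(g_i) = \hat g_i^{-1}*\partial^2(\hat g_i - g_i) + (\hat g_i^{-1}-g_i^{-1})*\partial^2 g_i + Q,
\end{align}
where each term of $Q$ is a product of bounded factors ($\hat g_i^{-1}$, $g_i^{-1}$, $\partial \hat g_i$, $\partial g_i$) with exactly one factor of $\partial(\hat g_i - g_i)$ or $\hat g_i^{-1}-g_i^{-1}$. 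For $i$ large, Theorem~\ref{t.invariant_nearby_metric_estimate} makes $\hat g_i$ uniformly equivalent to $g_i$. It also gives
\begin{align}
\Vert \hat g_i^{-1}-g_i^{-1}\Vert \le C\Vert \hat g_i - g_i\Vert
\end{align}
in the same norms, since the inverse is a smooth map near an invertible matrix.

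\textbf{H\"older case ($k\ge 2$).} Theorem~\ref{t.invariant_nearby_metric_estimate} gives $\Vert \hat g_i-g_i\Vert_{C^{k,\alpha}}\le C\tau_i$, so $\partial^2(\hat g_i-g_i)$ is $O(\tau_i)$ in $C^{k-2,\alpha}$. The remaining factors $\partial^2 g_i$, $\partial g_i$ and $\partial\hat g_i$ are uniformly bounded in $C^{k-1,\alpha}\subset C^{k-2,\alpha}$. Since $C^{k-2,\alpha}$ is an algebra, each term is $O(\tau_i)$, which gives part (1).

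\textbf{Sobolev case ($k\ge1$, $p>n$).} Here $\Vert \hat g_i-g_i\Vert_{W^{k+1,p}_{loc}(K)}\le C\tau_i$, so $\partial^2(\hat g_i-g_i)$ is $O(\tau_i)$ in $W^{k-1,p}(K)$. The coefficients lie in $W^{k,p}$, which embeds in $C^{k-1,\beta}$ because $p>n$. Multiplication by a $C^{k-1}$ function preserves $W^{k-1,p}$, and $W^{k,p}$ is an algebra, so the quadratic terms are controlled. All bounds hold on a slightly enlarged compact set, which absorbs the dependence on $K$.

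The one point needing care is the choice of coordinates: they must control $g_i$ and $\hat g_i$ simultaneously, since $\hat g_i$ may have worse coordinate regularity than $g_i$. Harmonic coordinates for $g_i$ suffice, because the statement only requires $\hat g_i$ to be close to $g_i$ in those fixed coordinates. Both sides of the estimate are tensors and the coordinate changes are uniformly controlled, so the conclusion is coordinate-independent. This also explains the loss of two derivatives relative to Theorem~\ref{t.invariant_nearby_metric_estimate}, and why $k=0$ gives nothing, consistent with the preceding remark.
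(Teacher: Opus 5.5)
Your proposal is correct and follows the same approach as the paper. The paper states this as an immediate corollary of Theorem~\ref{t.invariant_nearby_metric_estimate} with no further argument, relying only on the fact that $\operatorname{Rm}$ involves two derivatives of the metric. Your expansion of $\operatorname{Rm}(\hat g_i)-\operatorname{Rm}(g_i)$ in harmonic coordinates for $g_i$, combined with the multiplier and algebra properties of $C^{k-2,\alpha}$ and of $W^{k-1,p}$ and $W^{k,p}$ for $p>n$, is exactly the detail behind that one-line claim.
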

\begin{remark}
    Note that, by using Lemma~\ref{l.local_trivialization_regularity}, the estimates in Theorem~\ref{t.invariant_nearby_metric_estimate} and Corollary~\ref{c.invariant_metric_curvature_estimate} can be made uniform on $U\times\mathcal{N}$.
\end{remark}

\subsection{Scalar curvature distribution}\label{s.scalar distribution}
For $k = 0$, the metric $\hat{g}_i$ only has $W^{1,p}$ bounds, thus its scalar curvature is not uniformly bounded. However, 
it turns out that the difference $S(\hat{g}_i) - S(g_i)$ \textit{is} bounded in a distributional norm. We explain this next. 

Recall Theorem~\ref{t.invariant_nearby_metric_estimate} and Corollary~\ref{c.invariant_metric_curvature_estimate}, if we only assume Riemannian curvature is uniformly bounded, i.e.
\begin{align}
    \Vert\operatorname{Rm} \Vert_{C^0}\leq C_0,
\end{align}
with $k =0$,  then we cannot conclude that 
the curvature of the $\mathcal{N}$-invariant metric is close to that of the original metric in the strong sense. However, for the scalar curvature, we have a notion of scalar curvature as a distribution, which involves negative Sobolev spaces. This is defined in Lee-LeFloch, which we next review; see~\cite{Lee_LeFloch}.

Consider two Riemannian metrics on $M$, $g$ and $h$. Let $\nabla^g$, $\nabla^h$ be their corresponding Levi-Civita connections. Define the following tensor of differences of connections $\Upsilon = \Gamma(g)-\Gamma(h)$, where in locall coordinate:
\begin{align}
    \Upsilon_{ij}^k := \frac{1}{2}g^{kl}(\nabla^h_i g_{jl} + \nabla^h_j g_{il} - \nabla^h_l g_{ij}).
\end{align}
By a straightforward computation, we have the following formula for the difference of two scalar curvature, $S(g)$ and $S(h)$.
\begin{proposition}[\cite{Lee_LeFloch}]\label{p:scalar_distribution}
    The difference of scalar curvature is given by this formula:
    \begin{align}
        S(g) - S(h) = \operatorname{div}_h T + Q,
    \end{align}
    where:
    \begin{align}
        T^k &= g^{ij}\Upsilon_{ij}^k - g^{ik}\Upsilon_{ji}^j\\
        Q &= - \nabla^h_k g^{ij}\Upsilon_{ij}^k + \nabla^h_k g^{ik}\Upsilon_{ji}^i + g^{ij}(\Upsilon_{kl}^k\Upsilon_{ij}^l - \Upsilon_{jl}^k\Upsilon_{ik}^l).
    \end{align}
\end{proposition}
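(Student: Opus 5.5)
The plan is a direct computation in local coordinates. We express $\operatorname{Ric}(g)$ through $\operatorname{Ric}(h)$ and the difference tensor $\Upsilon$, contract with $g^{-1}$, and move the single derivative falling on $\Upsilon$ outside the contraction.

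\textbf{Step 1: $\Upsilon$ is the difference of the connections.} Since $\Gamma(g)-\Gamma(h)$ is a tensor, it suffices to verify the formula at a point in $h$-normal coordinates. There $\Gamma(h)=0$ and $\nabla^h=\partial$, so the displayed formula for $\Upsilon^k_{ij}$ reduces to the usual Christoffel formula for $g$. Hence $\nabla^g=\nabla^h+\Upsilon$, and $\Upsilon$ is symmetric in its lower indices.

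\textbf{Step 2: Ricci tensors.} Write $\nabla=\nabla^h$. Expanding $R(g)(\partial_i,\partial_j)\partial_k$ with $\nabla^g=\nabla+\Upsilon$ gives the standard identity
\begin{align}
R(g)^l{}_{ijk}=R(h)^l{}_{ijk}+\nabla_i\Upsilon^l_{jk}-\nabla_j\Upsilon^l_{ik}+\Upsilon^l_{im}\Upsilon^m_{jk}-\Upsilon^l_{jm}\Upsilon^m_{ik}.
\end{align}
The torsion terms cancel because both connections are torsion-free. Contracting $l$ with $i$ gives
\begin{align}
R_{jk}(g)=R_{jk}(h)+\nabla_l\Upsilon^l_{jk}-\nabla_j\Upsilon^l_{lk}+\Upsilon^l_{lm}\Upsilon^m_{jk}-\Upsilon^l_{jm}\Upsilon^m_{lk}.
\end{align}
The only care needed here is the sign and index conventions; I would fix them by checking the formula against the linearization of the Ricci tensor.

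\textbf{Step 3: trace with $g^{jk}$ and integrate by parts pointwise.} By the Leibniz rule,
\begin{align}
g^{jk}\nabla_l\Upsilon^l_{jk}=\nabla_l\big(g^{jk}\Upsilon^l_{jk}\big)-(\nabla_l g^{jk})\Upsilon^l_{jk},
\end{align}
and similarly for $g^{jk}\nabla_j\Upsilon^l_{lk}$. Collecting the divergence terms gives exactly $\operatorname{div}_h T$ with $T^k=g^{ij}\Upsilon^k_{ij}-g^{ik}\Upsilon^j_{ji}$. The two terms of the form $\nabla^h g^{-1}*\Upsilon$, together with the traced quadratic terms $g^{ij}(\Upsilon^k_{kl}\Upsilon^l_{ij}-\Upsilon^k_{jl}\Upsilon^l_{ik})$, are exactly the listed $Q$ after relabeling dummy indices.

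\textbf{Step 4: the remaining zeroth-order term, which is the main obstacle.} After Step 3 there is still the term $g^{jk}R_{jk}(h)-S(h)=(g^{jk}-h^{jk})R_{jk}(h)$. It is not visible in the displayed $Q$.
\begin{itemize}
\item I would first check whether the statement intends $S(h)$ to mean $g^{ij}R_{ij}(h)$, following the normalization of \cite{Lee_LeFloch}. In that case the formula holds exactly as stated.
\item Otherwise, this term must be absorbed into $Q$.
\end{itemize}
For the intended application this does no harm. In that application $h$ is the smooth background metric and $g=\hat g_i$ or $g_i$. The extra term is then of zeroth order in $g-h$, and it is bounded by $C\Vert g-h\Vert_{C^0}\,|\operatorname{Rm}(h)|$. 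This has the same form as the terms already in $Q$, which are controlled by $\Vert g-h\Vert_{W^{1,p}}$ via Sobolev embedding for $p>n$.

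The rest is bookkeeping: verify that every derivative falls either on $T$, in divergence form, or on a single factor of $g$ paired with $\Upsilon$. This is what later allows us to estimate $S(\hat g_i)-S(g_i)$ in $W^{-1,p}$ using only the $W^{1,p}$ bound of Theorem~\ref{t.invariant_nearby_metric_estimate}.
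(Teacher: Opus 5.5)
Your computation is the same as the paper's (Ricci difference via $\nabla^h\Upsilon$ and $\Upsilon*\Upsilon$, trace with $g^{ij}$, then Leibniz to split off $\operatorname{div}_h T$). Your Step 4 is a correct catch, not an obstacle: the paper's first displayed line silently drops $(g^{ij}-h^{ij})R_{ij}(h)$. For instance, $g=ch$ with a constant $c\neq 1$ gives $\Upsilon=0$, hence $T=Q=0$, while $S(g)-S(h)=(c^{-1}-1)S(h)$. So the identity is exact only with $S(h)$ read as $g^{ij}R_{ij}(h)$, which is the $\overline{R}$ term in Lee--LeFloch's $F$, or with that term added to $Q$. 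Also, $\Upsilon^i_{ji}$ in $Q$ should be $\Upsilon^j_{ji}$.

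Your remark that the extra term is harmless downstream is right when the background is $h=g_i$, as in the paper's pairing against $dvol(\hat g_i)$, since then $|\operatorname{Ric}(h)|\le C(n)C_0$. It would fail with the roles reversed, because $\operatorname{Rm}(\hat g_i)$ is not uniformly bounded.
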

\begin{proof}
    By computation:
    \begin{align}
    \begin{split}
    S(g) - S(h) &= g^{ij}(\nabla^h_k \Upsilon_{ij}^k - \nabla^h_j \Upsilon_{ki}^k) + g^{ij}(\Upsilon_{kl}^k\Upsilon_{ij}^l - \Upsilon_{jl}^k\Upsilon_{ik}^l)\\
    & = \nabla^h_k(g^{ij}\Upsilon_{ij}^k - g^{ik}\Upsilon_{ji}^j) - \nabla^h_k g^{ij}\Upsilon_{ij}^k + \nabla^h_k g^{ik}\Upsilon_{ji}^i + g^{ij}(\Upsilon_{kl}^k\Upsilon_{ij}^l - \Upsilon_{jl}^k\Upsilon_{ik}^l)\\
    & = \operatorname{div}_h T + Q.
\end{split}
\end{align}
\end{proof}
Thus, we can define $S(g) - S(h)$ as a distribution as follows.
\begin{definition}[\cite{Lee_LeFloch}]
    Let $M$ be a smooth manifold. Let $g$, $h$ be two smooth metrics so that $g,h\in W^{1,\infty}_{loc}(M)$. Then the difference of scalar curvature distribution is defined, for any $\phi\in W^{1,p}_{\mathrm{c}}(M)$, for some $p\geq 1$, by
    \begin{align}
        \langle S(g) - S(h), \phi \rangle = \int_M \Big(-T \cdot \nabla^h \Big(\phi \frac{dvol(g)}{dvol(h)}\Big) + Q\phi\frac{dvol(g)}{dvol(h)}\Big)dvol(h),
    \end{align}
    where $\frac{dvol(g)}{dvol(h)}$ is the Radon-Nikodym derivative of the two measures induced by $g$ and $h$.
\end{definition}
It is clear that, when $g$ and $h$ are at least $C^2$, 
\begin{align}
\langle S(g) - S(h), \phi \rangle = \int_M (S(g)- S(h))\phi dvol(g). 
\end{align}
\begin{proposition}\label{p:Scalar curvature estimate negative sobolev}
    If $g,h\in W^{1,\infty}_{loc}(M)$, then $\langle S(g) -S(h), \cdot\rangle$ is a bounded operator for $W^{1,p}_{c}(M)$, for any $p\geq 1$. In another words, $S(g)-S(h) \in W^{-1,p}_{loc}(M)$, for any $p\geq 1$. And for any compact set $K\subseteq M$, we have:
    \begin{align}
        \Vert S(g) - S(h) \Vert_{W^{-1,p}(K)} \leq C(n,p,K)\cdot \Vert g -h \Vert_{W^{1,p}(K)}.
    \end{align}
\end{proposition}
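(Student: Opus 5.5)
The plan is to estimate each term of the distributional pairing directly, using the structure from Proposition~\ref{p:scalar_distribution}. Fix a compact set $K$ and a test function $\phi\in W^{1,q}_c(K)$, where $q$ is the dual exponent of $p$. By definition,
\begin{align}
\langle S(g)-S(h),\phi\rangle = \int_K \Big(-T\cdot\nabla^h(\phi\,\rho) + Q\,\phi\,\rho\Big)\,dvol(h), \qquad \rho=\frac{dvol(g)}{dvol(h)} .
\end{align}
The $W^{-1,p}(K)$ norm is the supremum of this quantity over $\Vert\phi\Vert_{W^{1,q}}\leq 1$. It therefore suffices to bound the integral by $C\Vert g-h\Vert_{W^{1,p}(K)}\Vert\phi\Vert_{W^{1,q}(K)}$. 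The constant is allowed to depend on the $W^{1,\infty}(K)$ bounds of $g$ and $h$ and on a lower bound for their eigenvalues; this dependence is implicit in $C(n,p,K)$.

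\textbf{Step 1: pointwise bounds.} In local coordinates,
\begin{align}
\Upsilon = \tfrac12 g^{-1}*\nabla^h(g-h),
\end{align}
because $\nabla^h h=0$. This gives $|\Upsilon|\leq C|\nabla^h(g-h)|$, and $\Upsilon$ is bounded in $L^\infty$ by the $W^{1,\infty}$ assumption. Since $T$ is linear in $\Upsilon$, we get $|T|\leq C|\nabla^h(g-h)|$. The term $Q$ is a sum of products $\nabla^h g^{-1}*\Upsilon$ and $g^{-1}*\Upsilon*\Upsilon$, each containing at least one factor of $\Upsilon$, with the other factor bounded in $L^\infty$. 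Hence $|Q|\leq C|\nabla^h(g-h)|$ as well. Finally, $\rho=\sqrt{\det g/\det h}$ is Lipschitz on $K$ with $\Vert\rho\Vert_{W^{1,\infty}(K)}\leq C$.

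\textbf{Step 2: Hölder.} Using $|\nabla^h(\phi\rho)|\leq C(|\nabla\phi|+|\phi|)$ and Hölder's inequality with exponents $p$ and $q$,
\begin{align}
|\langle S(g)-S(h),\phi\rangle| \leq C\int_K |\nabla^h(g-h)|\,(|\nabla\phi|+|\phi|) \leq C\Vert g-h\Vert_{W^{1,p}(K)}\Vert\phi\Vert_{W^{1,q}(K)} .
\end{align}
This yields both the boundedness of the functional and the stated estimate. For the case $p=1$ the dual exponent is $q=\infty$, and the same argument applies with $W^{1,\infty}$ test functions.

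\textbf{Main obstacle.} The only delicate point is Step 1: checking that \emph{every} term of $Q$, including the quadratic ones, carries a factor of $\nabla^h(g-h)$ rather than just being bounded. Otherwise $Q$ would contribute a term not controlled by $\Vert g-h\Vert_{W^{1,p}}$. This holds because the quadratic terms are quadratic in $\Upsilon$, and $\nabla^h g^{-1}=-g^{-1}*\nabla^h(g-h)*g^{-1}$. The same $L^\infty$ bound on $\Upsilon$ is what lets us keep only one factor in $L^p$.
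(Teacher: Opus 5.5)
Your proposal is correct and follows essentially the same route as the paper. The paper likewise uses the $W^{1,\infty}$ bounds to get $\Vert T\Vert_{L^q}, \Vert Q\Vert_{L^q}\leq C\Vert g-h\Vert_{W^{1,q}}$ and a Lipschitz density $dvol(g)/dvol(h)$, and then applies H\"older against the test function. Your explicit check that every term of $Q$ carries a factor of $\nabla^h(g-h)$ is precisely what the paper's ``it can be easily seen'' step relies on. The paper tests with $W^{1,p}$ functions and bounds by $\Vert g-h\Vert_{W^{1,p^*}}$, which is your choice of dual exponents with $p$ and $p^*$ renamed.
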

\begin{proof}
    From Proposition~\ref{p:scalar_distribution} and our assumption, we have, $T\in L^\infty_{loc}(M)$, $Q\in L^\infty_{loc}(M)$, $\frac{dvol(g)}{dvol(h)}\in W^{1,\infty}_{loc}(M)$. 
    
    Now for a compact set $K\subseteq M$, $\phi\in W^{1,p}_0(K)$, for some $p\geq 1$, we have $\Vert g -h\Vert_{W^{1,\infty}(K)} \leq C_\infty(K)$. In particular, for any $q\geq 1$, $\Vert g -h \Vert_{W^{1,q}(K)}\leq C_q(K)$, and it can be easily seen that $\Vert T \Vert_{L^q(K)} \leq C(n,q)\Vert g-h\Vert_{W^{1,q}(K)}$ and $\Vert Q\Vert_{L^q(K)}\leq C(n,q)\Vert g-h\Vert_{W^{1,q}(K)}$. Thus:
    \begin{align}
        |\langle S(g) - S(h), & \phi \rangle| \leq \int_M |T|\cdot |\nabla^h\phi| \cdot \Big|\frac{dvol(g)}{dvol(h)} \Big| dvol(h) + \int_M |T| \cdot |\phi|\cdot \Big|\nabla^h \frac{dvol(g)}{dvol(h)}\Big| dvol(h) \\
        &+ \int_M |Q|\cdot |\phi|\cdot \Big|\frac{dvol(g)}{dvol(h)}\Big| dvol(h)\\
        \leq & \Vert \phi \Vert_{W^{1,p}(K)}\cdot \Big(2\Vert T \Vert_{L^{p^*}(K)}\cdot \Vert\frac{dvol(g)}{dvol(h)}\Vert_{W^{1,\infty}(K)} + \Vert Q\Vert_{L^{p^*}(K)}\cdot \Vert\frac{dvol(g)}{dvol(h)}\Vert_{L^\infty(K)}\Big)\\
        \leq & C(n, p^*, K, C_\infty, C_{p^*})\cdot\Vert\phi\Vert_{W^{1,p}(K)}\cdot \Vert g - h\Vert_{W^{1,p^*}(K)}.
    \end{align}
    Hence the result follows.
\end{proof}

We will apply this to the collapsed manifold $(M_i,g_i)$ and the nearby $\mathcal{N}$-invariant metric, $\hat{g}_i$, constructed in Theorem~\ref{t.invariant_nearby_metric_estimate}, 
to obtain the following theorem.
\begin{theorem}\label{t:weak scalar curvature universal cover estimate}
    Let $\{(M_i,g_i)\}$ and $(X,g_\infty)$ satisfy the Assumption~\ref{assumption: sequence assumption}, with $k = 0$, i.e., we only assume:
    \begin{align}
        \Vert\operatorname{Rm}(g_i)\Vert_{C^0}\leq C_0.
    \end{align}
    Then, for every simply connected open subset $U\subseteq X$, on $\tilde{F_i^{-1}(U)}$, we have the estimate:
    \begin{align}
        \Vert S({g_i}) -S({\hat{g}_i})\Vert_{W^{-1,p}(K)} \leq C(n,C_0, p,  X,K)\cdot\tau_i
    \end{align}
    for every $p > n$, and for any compact set $K\subseteq \tilde{F_i^{-1}(U)}$. The integration is with respect to the volume form of $\hat{g}_i$, i.e., the $\mathcal{N}$-invariant volume form.
\end{theorem}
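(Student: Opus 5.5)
The plan is to apply Proposition~\ref{p:Scalar curvature estimate negative sobolev} on the universal cover $\tilde{F_i^{-1}(U)}$. We take $h = \hat{g}_i$ as the background metric, so that all pairings are taken against the $\mathcal{N}$-invariant volume form $dvol(\hat{g}_i)$. We take $g = g_i$. Both metrics are pulled back to $\tilde{F_i^{-1}(U)}$ via the local trivialization of Lemma~\ref{l.local_trivialization_regularity}. On the cover the collapsing directions are unwrapped, and Definition~\ref{d.local_convergence} together with the $W^{2,p}_{loc}$ harmonic coordinate control gives uniform local bounds on $g_i$. The first step is to verify the hypothesis $g,h \in W^{1,\infty}_{loc}$ with \emph{uniform} constants on a fixed compact $K$. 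For $g_i$ this follows from $\|\operatorname{Rm}(g_i)\|_{C^0}\le C_0$, which gives $C^{1,\alpha}$ (indeed $W^{2,p}$) bounds in harmonic coordinates on the cover. For $\hat g_i$ we use Theorem~\ref{t.invariant_nearby_metric_estimate} with $k=0$. It gives $\|\hat g_i - g_i\|_{W^{1,p}(K)} \le C\tau_i$ for every $p>n$. By Morrey embedding this also yields $C^{0,\alpha}$ closeness, so $\hat g_i$ is uniformly equivalent to $g_i$ and has uniformly bounded $W^{1,q}$ norms for all finite $q$.

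The second step is to revisit the estimate in Proposition~\ref{p:Scalar curvature estimate negative sobolev} quantitatively. We write $S(g_i)-S(\hat g_i) = \operatorname{div}_{\hat g_i}T + Q$ as in Proposition~\ref{p:scalar_distribution}. Here $\Upsilon$ is built from $\nabla^{\hat g_i} g_i = \nabla^{\hat g_i}(g_i - \hat g_i)$. Hence $|\Upsilon| + |T| \le C|\nabla^{\hat g_i}(g_i-\hat g_i)|$ pointwise, with $C$ depending only on the uniform equivalence of the two metrics. Also $|Q| \le C(|\nabla \hat g_i^{-1}|+|\Upsilon|)\,|\Upsilon|$ is quadratic with one factor of $\Upsilon$. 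The density $\frac{dvol(g_i)}{dvol(\hat g_i)}$ is bounded in $L^\infty$, and its gradient is controlled by $\nabla^{\hat g_i}(g_i-\hat g_i)$. For a test function $\phi\in W^{1,p'}_0(K)$, Hölder's inequality then bounds the pairing by $C\|\nabla(g_i-\hat g_i)\|_{L^{q}(K)}\|\phi\|_{W^{1,p'}(K)}$. The remaining factors, namely $\nabla\hat g_i$, $\Upsilon$, and the gradient of the density, are placed in some $L^{r}$ using the uniform $W^{1,r}$ bounds, and the Sobolev embedding of $\phi$ (into $L^\infty$ or a high $L^s$) absorbs the integrability bookkeeping. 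Since Theorem~\ref{t.invariant_nearby_metric_estimate} gives $\|g_i-\hat g_i\|_{W^{1,q}(K)}\le C\tau_i$ for all $q>n$, every such factor yields $C\tau_i$. This gives the desired $W^{-1,p}(K)$ bound $C(n,C_0,p,X,K)\tau_i$ for each $p>n$.

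The main obstacle is that the metrics are not in $W^{1,\infty}$ with uniform control: $W^{1,p}$ for all finite $p$ is all that is available. In the proof of Proposition~\ref{p:Scalar curvature estimate negative sobolev}, the $W^{1,\infty}$ norm of the density and of $\hat g_i$ were used only as multipliers. I would replace each such $L^\infty$ multiplier by an $L^{r}$ norm with $r$ large, paid for by the extra integrability of $\phi$ from Sobolev embedding. The bound is then linear in $\|g_i-\hat g_i\|_{W^{1,q}}$ for suitable large $q$, with constants depending only on the uniform bounds. A secondary point is checking that the constants do not depend on $i$. This holds because $K$ is a fixed compact set in the uniformly controlled coordinates of Lemma~\ref{l.local_trivialization_regularity}, so all Sobolev constants are uniform.
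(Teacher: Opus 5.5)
Your proposal is correct and follows the paper's route: you apply the Lee--LeFloch decomposition of Proposition~\ref{p:Scalar curvature estimate negative sobolev} with $h=\hat g_i$, $g=g_i$, and insert the $W^{1,p}$ closeness from Theorem~\ref{t.invariant_nearby_metric_estimate}; replacing the $W^{1,\infty}$ multipliers by $L^r$ bounds also makes explicit the uniformity in $i$ that the paper's one-line proof takes for granted, since its constant depends on a $W^{1,\infty}$ bound for $\hat g_i$ that is not among the stated estimates. One correction to your bookkeeping: because $p>n$, the test functions lie in $W^{1,p'}_0(K)$ with $p'<\frac{n}{n-1}$, which embeds in neither $L^\infty$ nor any high $L^s$, but this does no harm, because $\frac{dvol(g_i)}{dvol(\hat g_i)}\in L^\infty$ handles the $T\cdot\nabla\phi$ term and every other metric factor lies in all $L^r$, $r<\infty$, so $\phi\in L^{p'}$ already suffices.
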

\begin{proof}
    Combine the estimate from Theorem~\ref{t.invariant_nearby_metric_estimate} and Proposition~\ref{p:Scalar curvature estimate negative sobolev}.
\end{proof}

\begin{remark} As observed in \cite[Section~6]{Lee_LeFloch}, the weak formulation discussed above for the scalar curvature actually holds for any curvature tensor using tensor pairings when integrating by parts. So we actually have the stronger estimate
\begin{align}
        \Vert \operatorname{Rm}({g_i}) - \operatorname{Rm}({\hat{g}_i})\Vert_{W^{-1,p}(K)} \leq C(n,C_0, p, X,K)\cdot\tau_i.
    \end{align}
    This should be useful in other applications, but for the purposes of the present paper we only need the scalar curvature estimate.
\end{remark}


\section{The limiting measure and the drift Laplacian}\label{sec:convergence_eigenvalue}
Recall the definition of measured Gromov-Hausdorff convergence: consider $\{(X_\alpha, d_{\alpha}, \mu_\alpha)\}$, where $(X_\alpha, d_{\alpha})$ is a compact metric space, and $\mu_\alpha$ is a Borel measure such that $\mu_\alpha(X_\alpha)=1$. 

\begin{definition} Let $\mu$ be a Borel measure on $X$. 
    We say:
\begin{align}
    (X_\alpha, d_\alpha, \mu_\alpha)\to (X, d_{\infty}, \mu)
\end{align}
in the \textit{measured-Gromov-Hausdorff} sense if there exists a Borel measurable map:
\begin{align}
    \psi_\alpha: X_\alpha \to X
\end{align}
and a sequence $\epsilon_\alpha\to 0$ such that:
such that:
\begin{itemize}
    \item $\psi_\alpha$ is a $\epsilon_\alpha$-Gromov-Hausdorff approximation;
    \item  The measure $(\psi_\alpha)_*\mu_\alpha \to \mu$ in the weak* topology, namely
    \begin{align}
        \lim_{\alpha\to\infty} \int_{X} u (\psi_\alpha)_*d\mu_\alpha = \int_X u d\mu
    \end{align}
    for any $u\in C^0(X)$.
\end{itemize}
\end{definition}

In our case, by assumption, take $\mu_i = \frac{dvol(g_i)}{\operatorname{vol}_{g_i}(M_i)}$, and $\psi_i = f_i$. Then we view:
\begin{align}
    (M^n_i , g_i, \mu_i)\to (X^d, g_{\infty}, \mu)
\end{align}
in the measured Gromov-Hausdorff sense.

On $X^d$, consider the sequence of functions $\{\chi_i\}$ on $X$ such that:
\begin{align}\label{eq:relative volume form}
    \chi_i :=\frac{\operatorname{vol}(F_i^{-1}(x))}{\operatorname{vol}(M_i)}.
\end{align}
The following result gives the main convergence properties of the $\chi_i$.
\begin{theorem}
\label{t:chiconv}
    Let $\{(M_i,g_i)\}$ and $(X,g_\infty)$ satisfy the Assumption~\ref{assumption: sequence assumption}, with $k = 0$.
    There exists $\chi \in C^{1,\alpha} \cap W^{2,p}$ for any $0 < \alpha < 1$ and $p > 1$, such that 
    $\chi_i \to \chi$ strongly in $C^{1,\beta}$ for any $\beta < \alpha$
    and $\chi_i \rightharpoonup  \chi$ weakly in $W^{2,p}$.
Thus, we can assume that $d \mu = \chi \operatorname{dvol}_{g_\infty}$ is satisfied pointwise.
The limiting $\chi$ is independent of the choice of fibration $F_i$, and $\chi(\hat{g}_i) = \chi({g}_i)$. 
\end{theorem}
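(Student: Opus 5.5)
Our plan is to bound $\chi_i$ uniformly in $W^{2,p}$ for every $p>1$ and then use compactness. We work over a simply connected coordinate neighborhood $U\subseteq X$ and use the coarea formula for the fibration $F_i$ of Theorem~\ref{t.cfg}. Writing $J_i$ for the normal Jacobian of $F_i$, we have
\begin{align}
\chi_i(x) = \frac{1}{\operatorname{vol}(M_i)}\int_{F_i^{-1}(x)} dA_{g_i},
\end{align}
and for $\phi\in C^\infty_c(U)$ we have $\int_{M_i}\phi\circ F_i \, J_i\, dvol(g_i)=\operatorname{vol}(M_i)\int_U \phi\,\chi_i\, dvol(g_\infty)$. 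Because $F_i$ is a $\tau_i$-almost Riemannian submersion, $J_i = 1+O(\tau_i)$. This identity, together with $(F_i)_*\mu_i\to\mu$ weakly, will identify any limit of the $\chi_i$ with the density of $\mu$. Once that is done, the limit is independent of $F_i$, since $\mu$ depends only on the measured Gromov-Hausdorff limit.

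\textbf{Step 1 (uniform $C^0$ bounds).} Relative volume comparison, using $|\operatorname{Rm}|\le C_0$ and the diameter bound, shows that $\operatorname{vol}(F_i^{-1}(B_r(x)))/\operatorname{vol}(M_i)$ is comparable to $r^d$ uniformly. Hence $c\le\chi_i\le C$.

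\textbf{Step 2 (derivatives via the first variation of fiber volume).} Let $\bar\partial$ be a lifted horizontal field along the fibers; we can take it to be the horizontal lift of a coordinate field, which has a uniform $C^{1,\alpha}$/$W^{2,p}$ bound by Corollary~\ref{c.fibration_regularity}. The first variation formula gives
\begin{align}
\partial_x \chi_i = \frac{1}{\operatorname{vol}(M_i)}\int_{F_i^{-1}(x)} \big(-\langle H, \bar\partial\rangle + \operatorname{div}_{\text{fib}}\bar\partial^{T}\big)\, dA .
\end{align}
Here $H$ is the mean curvature of the fiber and $\bar\partial^{T}$ is the tangential correction, whose fiber divergence integrates to zero. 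This gives $|\nabla\chi_i|\le C\sup|A|\,\chi_i$. Differentiating once more produces integrals of $\partial_x H$ and $H*H$, and $\partial_x H$ involves $\nabla A$ and $\operatorname{Rm}(g_i)$ through the Codazzi/Riccati equations. Corollary~\ref{c.fibration_regularity} with $k=0$ bounds $A$ in $C^{0,\alpha}\cap W^{1,p}$. We also use the fiberwise average: $\chi_i$ is the fiber integral of a quantity controlled by the local universal covers, which converge in $C^{1,\alpha}$ and $W^{2,p}_{loc}$ as in Definition~\ref{d.local_convergence}. Together these give
\begin{align}
\|\chi_i\|_{C^{1,\alpha}(U)}+\|\chi_i\|_{W^{2,p}(U)}\le C(n,C_0,p,X).
\end{align}

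\textbf{Main obstacle.} The hard step is the $W^{2,p}$ bound. A second derivative of $\chi_i$ needs one derivative of $A$ integrated over the fiber, and $A$ is only $W^{1,p}$. The bound must therefore come from averaging: the $L^p$ norm in $x$ of a fiber average of $\nabla A$ is controlled by the $L^p$ norm of $\nabla A$ on the tube $F_i^{-1}(U)$, normalized by fiber volume, via Jensen and coarea. That tube norm is bounded by the local $W^{2,p}$ convergence in harmonic coordinates on the universal covers.

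\textbf{Step 3 (compactness and identification).} By Arzel\`a--Ascoli and Rellich, $\chi_i\to\chi$ in $C^{1,\beta}$ for $\beta<\alpha$, and weakly in $W^{2,p}$ along a subsequence. Passing to the limit in the coarea identity, using $J_i\to1$ uniformly and $F_i$ as the GH approximation, gives $d\mu=\chi\,dvol(g_\infty)$. Since this characterizes $\chi$, the whole sequence converges and the limit is independent of $F_i$.

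For $\hat g_i$, Theorem~\ref{t.invariant_nearby_metric_estimate} gives $\|\hat g_i-g_i\|_{C^0}\le C\tau_i$. So the fiber volumes and total volumes of $\hat g_i$ and $g_i$ agree up to factors $1+O(\tau_i)$, and $\chi(\hat g_i)$ and $\chi(g_i)$ have the same limit. In fact, because $\hat g_i$ is an average of pullbacks of $g_i$ by fiber-preserving maps that preserve the Haar measure, the fiber volumes of $\hat g_i$ and $g_i$ should agree exactly to first order. This yields $\chi(\hat g_i)=\chi(g_i)$ in the limit.
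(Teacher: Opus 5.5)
Your proof is correct, and for the main assertions it follows the paper's route. The uniform $C^{1,\alpha}\cap W^{2,p}$ bounds on $\chi_i$ come from the $C^{2,\alpha}\cap W^{3,p}$ bounds on $F_i$; then you use compactness, and then you identify the limit with the density of $\mu$. The paper does the identification by citing Fukaya for $\operatorname{vol}(F_i^{-1}(U))/\operatorname{vol}(M_i)\to\mu(U)$; your coarea identity with $J_i\to 1$ is the same step made explicit. Your first-variation formula and the Jensen--coarea control of fiber averages of $\nabla A$ are what lie behind the paper's one-line claim that $\chi_i$ is bounded in $C^{1,\alpha}\cap W^{2,p}$, and you handle the normalization issue there correctly.

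One repair is needed in Step~1. Bishop--Gromov on $M_i$ gives only $r^n$-type lower bounds, and no upper bound relative to $\operatorname{vol}(M_i)$. So Step~1 as justified does not yield $c\le\chi_i\le C$. Get it instead from your own Step~2 bound $|\nabla\log\chi_i|\le C$, together with $\int_X\chi_i\,dvol(g_\infty)=1+O(\tau_i)$ and the compactness of $X$.

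The genuine difference is the claim about $\hat g_i$. The paper asserts that the averaging leaves fiber volumes unchanged, so $\chi_i(\hat g_i)=\chi_i(g_i)$ for every $i$. You prove only that the limits coincide, using $\|\hat g_i-g_i\|_{C^0}\le C\tau_i$.

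Your caution is justified. In a left-invariant fiber frame, the averaged fiber metric is the matrix average $\bar G$ of the matrices $G(h)$. In general $\sqrt{\det\bar G}$ differs from $\int\sqrt{\det G(h)}\,d\mu(h)$: for example, $G=\operatorname{diag}(a,a^{-1})$ with $a$ nonconstant gives strict inequality by Cauchy--Schwarz. The two agree only to first order, with relative error $O(\tau_i^2)$, which is exactly your hedge.

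The cost of your version is that it gives only uniform convergence $\chi_i(\hat g_i)\to\chi$. It does not give uniform $C^{1,\alpha}\cap W^{2,p}$ control of $\chi_i(\hat g_i)$, which the paper later draws from this theorem in Lemma~\ref{l.convergence invariant metrics on base}. For the statement itself, which concerns the limiting density, your argument suffices.
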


\begin{proof}
From the construction of $F_i$ in Theorem~\ref{t.cfg} and Corollary~\ref{c.fibration_regularity}, the fibration $F_i$ has $C^{2,\alpha}$ or $W^{3,p}$ regularity. From the construction of \eqref{eq:relative volume form}, we have $\chi_i$ is $C^{1,\alpha}\cap W^{2,p}$ uniformly bounded. Then, $\chi_i \to \chi$ by Arzela-Ascoli, subsequentially, in $C^{1,\beta}$ for any $\beta <\alpha$, and $\chi_i \rightharpoonup \chi$ weakly in $W^{2,p}$.

Next, we show the limit is independent the choice of $F_i$, as long as $F_i$ is the $\tau_i$ almost Riemannian submersion in the sense of Theorem~\ref{t.cfg}. Consider the sequence $(M_i, g_i, \mu_i)$, with $\mu_i = \frac{dvol(g_i)}{\operatorname{vol}(M_i,g_i)}$. By the bounded curvature assumption, $(M_i,g_i,\mu_i)\to (X,g_\infty, \mu)$, for some measure $\mu$ on $X$ with the relation $d\mu = \chi dvol(g_\infty)$. Then for all fibration $F_i$, since they are $\tau_i$-Gromov Hausdorff approximation, we have:
\begin{align}
    \int_{F_i^{-1}(U)} d\mu_i =\frac{\operatorname{vol}(F_i^{-1}(U))}{\operatorname{vol}(M_i,g_i)} \to \int_U d\mu.
\end{align}
As argued in \cite[Lemma~3.6]{Fuk_eigen}, the relative volume function $\chi_i$ converges to some $\chi_\infty$, the measure $\chi_i dvol(g_i)$ converges to $d\mu$ in weak* topology. Thus, we have $\chi_\infty = \chi$ almost everywhere. 

Finally, for the averaged metrics $\hat{g}_i$ constructed in the proof of Theorem~\ref{t.invariant_nearby_metric_estimate}, the fiber volume is unchanged in the construction, thus
 $\chi_i(g_i) = \chi_i(\hat{g}_i)$. So the convergence is in the same sense as $\chi_i(g_i)$.
\end{proof}
In our application, we need to consider the following operator. 
\begin{definition} The drift Laplacian is 
    \begin{align}
    \Delta_\mu = \Delta_{g\infty}+\langle \nabla \log\chi, \nabla\cdot\rangle
\end{align}
    \end{definition}
    This is self-adjoint with respect to $d\mu$. To see this, note that the bilinear form on $L^2(X)$ associated with $\Delta_\mu$ is $Q(f_1,f_2) = \int_X \langle \nabla f_1, \nabla f_2 \rangle d\mu$. Formally, we have:
\begin{align}
\label{e:ibp}
\begin{split}
    \int_X\langle \nabla f_1, \nabla f_2 \rangle d\mu & = \int_X \langle \nabla f_1, \nabla f_2 \rangle \chi dvol(g_\infty)\\
    & = \int_X (-\Delta_{g_\infty} f_1\cdot\chi -\langle \nabla f_1, \nabla\chi\rangle)f_2 dvol(g_\infty)\\
    & = \int_X (-\Delta_{g_\infty} f_1 - \langle\nabla f_1, \nabla\chi\rangle/\chi)f_2 d\mu
\end{split}
\end{align}

\begin{remark}
 In  \cite{Fuk_eigen}, Fukaya proved a result about convergence of the eigenvalues of the Laplacian to the eigenvalues of the drift Laplacian on the limit. While this is closely related, we do not state it here, since we do not need it in our proof.  
 \end{remark}

 \subsection{An eigenvalue estimate for the drift Laplacian}\label{s:eignevalue estimate}

In this section, we give a natural condition on the measured limit space which implies the eigenvalue assumption in Item (2) of Theorem~\ref{cfgconj_intro}. For the measured limit space $(X^d, g_{\infty}, \mu)$, where $d\mu = \chi dvol(g_{\infty})$, 
let $\chi = e^{-f}$. 
In terms of $f$, the drift Laplacian is 
\begin{align}
\Delta_{\mu} = \Delta_{g_{\infty}} - \langle \nabla f, \nabla \cdot \rangle.
\end{align}
Next, for $n > d$, define the weighted $n$-Bakry-\'Emery-Ricci tensor by 
\begin{align}
    \operatorname{Ric}^n_f
    =
    \operatorname{Ric}_g+\nabla^2 f-\frac{1}{n-d}\,df\otimes df .
\end{align}
This was first considered by Bakry-\'Emery; see \cite{BakryEmery1985, Bakry1994}.
We have the following weighted Lichnerowicz-Obata estimate.
\begin{lemma}
\label{l:LOestimate}
Let $(X^d,g)$ be a closed Riemannian manifold, $f\in W^{2,p}(X)$ for $p > d$.
If 
\begin{align}
    \operatorname{Ric}^n_f\geq (n-1) K g
\end{align}
almost everywhere for some constant $K>0$, then 
\begin{align}
    \lambda_1(-\Delta_{\mu}) > n K .
\end{align}
\end{lemma}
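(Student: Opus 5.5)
The plan is the classical Bochner argument for the Bakry–Émery operator with effective dimension $n$, integrated against $d\mu = e^{-f}dvol(g)$. Let $u$ be a first nonconstant eigenfunction, $-\Delta_\mu u = \lambda u$ with $\lambda = \lambda_1(-\Delta_\mu)>0$.

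\emph{Regularity of $u$.} Since $f\in W^{2,p}$ with $p>d$, the drift coefficient $\nabla f$ lies in $W^{1,p}\subset C^{0,\gamma}$. Bootstrapping
\begin{align}
\Delta u = -\lambda u + \langle \nabla f,\nabla u\rangle
\end{align}
gives first $u\in C^{1,\gamma}$, then $u\in W^{3,p}$. Hence $|\nabla u|^2\in W^{2,p}$ and every identity below holds almost everywhere and may be integrated by parts. This is the only place where the low regularity of $f$ enters, and it is the reason the curvature hypothesis is only assumed almost everywhere.

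\emph{Pointwise inequality.} I will use the weighted Bochner formula
\begin{align}
\tfrac12 \Delta_\mu |\nabla u|^2 = |\nabla^2 u|^2 + \langle \nabla u, \nabla \Delta_\mu u\rangle + (\operatorname{Ric}+\nabla^2 f)(\nabla u,\nabla u).
\end{align}
Set $a=\Delta u$ and $b=-\langle\nabla f,\nabla u\rangle$, so that $a+b=\Delta_\mu u$. Combine $|\nabla^2u|^2\ge a^2/d$ with the elementary inequality
\begin{align}
\frac{a^2}{d}+\frac{b^2}{n-d}\ge \frac{(a+b)^2}{n},
\end{align}
which follows from Cauchy–Schwarz. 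This gives
\begin{align}
|\nabla^2 u|^2 + \nabla^2 f(\nabla u,\nabla u) \ge \frac{(\Delta_\mu u)^2}{n} + \Big(\nabla^2 f - \tfrac{1}{n-d}df\otimes df\Big)(\nabla u,\nabla u).
\end{align}
Consequently
\begin{align}
\tfrac12\Delta_\mu|\nabla u|^2 \ge \frac{\lambda^2u^2}{n} - \lambda|\nabla u|^2 + (n-1)K|\nabla u|^2 .
\end{align}

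\emph{Integration.} Integrate against $d\mu$. The left side integrates to zero, by the self-adjointness computation \eqref{e:ibp}. Using $\int|\nabla u|^2d\mu = \lambda\int u^2 d\mu$, we obtain
\begin{align}
0\ge \Big(\frac{\lambda^2}{n} - \lambda^2 + (n-1)K\lambda\Big)\int u^2\,d\mu .
\end{align}
Dividing by $\lambda(n-1)/n\cdot\int u^2 d\mu>0$ gives $\lambda\ge nK$.

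\emph{Strictness.} This is the step I expect to be the main obstacle. Suppose $\lambda = nK$. Then every inequality above is an equality almost everywhere, hence everywhere by continuity of the $W^{3,p}$ quantities. Three consequences follow:
\begin{itemize}
\item $\nabla^2u = \tfrac{\Delta u}{d}g$;
\item the Cauchy–Schwarz step is sharp, so $b/(n-d) = a/d$, which with $a+b=-\lambda u$ gives $\Delta u = -\tfrac{d}{n}\lambda u$ and hence $\nabla^2 u = -Ku\, g$;
\item $\operatorname{Ric}^n_f(\nabla u,\nabla u) = (n-1)K|\nabla u|^2$.
\end{itemize}
By Obata's theorem, $(X,g)$ is then a round sphere of curvature $K$ with $\operatorname{Ric}=(d-1)Kg$, and $u$ is a first spherical harmonic. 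Sharpness also forces $\langle\nabla f,\nabla u\rangle = \tfrac{n-d}{n}\lambda u = (n-d)Ku$.

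To finish, I would first try to show that this relation, combined with the equality case $\operatorname{Ric}^n_f(\nabla u,\nabla u)=(n-1)K|\nabla u|^2$ at the critical points of $u$, is incompatible with $f$ being a globally defined $W^{2,p}$ function on the closed sphere. At the two poles $\nabla u=0$ while $u\ne0$, so the left side $\langle\nabla f,\nabla u\rangle$ vanishes but the right side $(n-d)Ku$ does not. This contradiction uses $n>d$ essentially, since it requires $n-d>0$. Therefore $\lambda>nK$.
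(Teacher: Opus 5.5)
Your argument is correct. The paper's proof consists only of citations: Wang for $\lambda_1(-\Delta_\mu)\ge nK$, Li--Wei for excluding equality when $n>d$, and an unproved assertion that those smooth-$f$ proofs still hold for $f\in W^{2,p}$, $p>d$. You instead write out the standard Bochner argument behind those references: the weighted Bochner formula, the splitting $\frac{a^2}{d}+\frac{b^2}{n-d}\ge\frac{(a+b)^2}{n}$, and integration against $d\mu$. Your bootstrap to $u\in W^{3,p}\subset C^{2,\gamma}$ with $\nabla f\in C^{0,\gamma}$ is exactly what justifies the paper's ``it is easy to see'' remark. So your version is self-contained and actually checks the low-regularity claim, while the paper's is shorter but leaves that check to the reader.

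The strictness step can be shortened.
\begin{itemize}
\item Obata's theorem is not needed. For $d=1$ the conclusion you state is also slightly off, since any circle whose length is a multiple of $2\pi/\sqrt{K}$ admits such a $u$.
\item Your third bullet is not needed either. It holds only almost everywhere, because $\nabla^2 f$ is merely in $L^p$, so ``everywhere by continuity'' does not apply to it. It also carries no information at critical points of $u$.
\end{itemize}
All you need is the Cauchy--Schwarz equality $(n-d)\Delta u=-d\langle\nabla f,\nabla u\rangle$, which holds everywhere by continuity, evaluated at a maximum point $x_0$ of $u$. There $\nabla u(x_0)=0$ and $n>d$ give $\Delta u(x_0)=0$. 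Hence $\lambda u(x_0)=-\Delta_\mu u(x_0)=0$, contradicting $u(x_0)>0$, which holds because $\int_X u\,d\mu=0$ and $u\not\equiv 0$.

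This pointwise evaluation is a second place where the regularity of $f$ enters, through continuity of $\nabla f$. So your remark that the bootstrap is the only place the low regularity matters is not quite accurate.
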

\begin{proof}
This is proved in \cite{Wang2012} and the equality case is ruled out in \cite{LiWei}, since $n > d$.
Those references assumed that $f$ is smooth, however, it is easy to see that their proofs remains valid 
under our regularity assumption. 
\end{proof}
\begin{proof}[Proof of Corollary~\ref{c:lambda1}] 
We estimate
    \begin{align}
    \begin{split}
      Ric^n_f &=  Ric(g_{\infty}) + \nabla^2 f - \frac{1}{n-d} df \otimes df \\
        &\geq \Big( \frac{v}{n} + \eta \Big) g_{\infty}
        - \Vert \nabla^2 f \Vert_{C^0} g_{\infty}
        - \frac{1}{n-d} \Vert \nabla f \Vert_{C^0} g_{\infty}
        \geq \frac{v}{n} g_{\infty},
   \end{split}
    \end{align}
and the eigenvalue estimate follows from Lemma~\ref{l:LOestimate}.         
    \end{proof}

\section{The collapsing CSC metrics}
\label{section: main}

In this section, we consider the following assumption.
\begin{assumption}\label{assumption: collapsing CSC}
    Let $\{(M^n_i ,g_i)\}$ be a sequence of closed $n$-dimensional Riemannian manifolds such that:
\begin{itemize}
    \item The Riemannian curvature is uniformly bounded: \begin{align}
        \Vert\operatorname{Rm}(g_i)\Vert_{C^{0}} \leq C_0,
\end{align}
    \item The diameter is bounded from above: $d(M_i, g_i)\leq C$.
    \item The scalar curvature $S(g_i)\equiv v$, where $v$ is a constant;
    \item The sequence collapses to a lower-dimensional manifold, i.e.:
    \begin{align}
        (M^n_i,g_i,\mu_i)\to (X^d, g_\infty, \mu)
    \end{align}
    in the measured Gromov-Hausdorff sense to a manifold $X$, where $d<n$, and $d\mu = \chi dvol(g_{\infty})$.
\item If $v > 0$, the operator  $- \Delta_{\mu}$, where
\begin{align}
\label{1opassum}
\Delta_\mu h \equiv \Delta_X h  + \nabla \log \chi \cdot \nabla_Xh,
\end{align} 
does not have $\frac{v}{n-1}$ as an eigenvalue, i.e, there are no non-zero $W^{2,p}$-solutions on $X$ of 
\begin{align}
\label{e:e}
- \Delta_\mu h = \frac{v}{n-1} h.
\end{align}
\end{itemize}
\end{assumption}
The following is our main theorem. 
\begin{theorem}
\label{cfgconj}
 Let $\{(M_i^n, g_i)\}$ be a sequence of n-dimensional closed Riemannian manifolds satisfying Assumption~\ref{assumption: collapsing CSC}.
Then, for $i$ large, there exists smooth metrics $g'_i$ on $M_i$ such that:
\begin{itemize}
    \item $g_i$, $g_i'$ are close in $W^{1,p}$ sense, for any $p>n$, with $\Vert g_i-g_i' \Vert_{W^{1,p}}\to 0$ ;
    \item $g_i'$ is $\mathcal{N}$-invariant;
    \item $S(g_i')$ has constant scalar curvature satisfying $S(g_i') \equiv  v$  if $v \neq 0$, but $S(g_i') \equiv v_i$ if  $v = 0$, where $v_i \to 0$ as $i \to \infty$.
    \item $(M_i, g_i')\to (X, g_\infty)$ in the Gromov-Hausdorff sense, as $i \to \infty$.
\end{itemize}
\end{theorem}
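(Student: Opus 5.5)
We start from the $\mathcal{N}$-invariant metric $\hat g_i$ of Theorem~\ref{t.invariant_nearby_metric_estimate} (with $k=0$) and look for the CSC metric as an invariant conformal deformation $g_i' = u^{4/(n-2)}\hat g_i$ with $u = 1+w$. Here $w$ is $\mathcal{N}$-invariant, so it is the pullback of a function on $X$. (For $n=2$ we would use $e^{2w}\hat g_i$ instead; the argument is the same.) The CSC equation is
\begin{align}
-\tfrac{4(n-1)}{n-2}\Delta_{\hat g_i} u + S(\hat g_i) u = c\, u^{\frac{n+2}{n-2}},
\end{align}
with $c=v$ when $v\neq 0$, and $c=v_i$ an unknown constant when $v=0$.

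For invariant $u$, the Laplacian $\Delta_{\hat g_i}$ descends to $X$. It equals the Laplacian of the quotient metric plus a drift term $\nabla\log\chi_i\cdot\nabla$, where $\chi_i$ is the fiber volume function, which $\hat g_i$ shares with $g_i$ by Theorem~\ref{t:chiconv}. The term $S(\hat g_i)$ is only a distribution on the universal cover. We average it over the fibers by pairing it with invariant test functions, which are pulled back from $X$, against the invariant volume form. This defines a distribution $\bar S_i$ on $X$ in $W^{-1,p}(X)$.

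By Theorem~\ref{t:weak scalar curvature universal cover estimate}, applied with $S(g_i)\equiv v$ and with the compact set $K$ taken to be a fundamental domain of the fiber action (using the remark after Corollary~\ref{c.invariant_metric_curvature_estimate} for uniformity), we get $\Vert \bar S_i - v\Vert_{W^{-1,p}(X)}\le C\tau_i$. We also get quotient metrics converging to $g_\infty$ in $W^{1,p}$ (from the almost Riemannian submersion property and Theorem~\ref{t.invariant_nearby_metric_estimate}), and $\chi_i\to\chi$ in $C^{1,\beta}$.

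We then set up the map
\begin{align}
\mathcal{F}_i(w) = -\tfrac{4(n-1)}{n-2}\Delta_{\mu_i}(1+w) + \bar S_i(1+w) - c(1+w)^{\frac{n+2}{n-2}},
\end{align}
acting from $W^{1,p}(X)$ to $W^{-1,p}(X)$ with $p>n$. This is well defined: $W^{1,p}\subset C^0$, and the product of a $W^{-1,p}$ distribution with a $W^{1,p}$ function lies in $W^{-1,p}$ because $W^{1,p}$ is an algebra. The residual at $w=0$ is $\Vert \mathcal F_i(0)\Vert_{W^{-1,p}}=\Vert\bar S_i - v\Vert\le C\tau_i$.

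The linearization at $w=0$ is
\begin{align}
L_i = -\tfrac{4(n-1)}{n-2}\Delta_{\mu_i} + \bar S_i - \tfrac{n+2}{n-2}v.
\end{align}
This converges in operator norm to
\begin{align}
L_\infty = \tfrac{4(n-1)}{n-2}\Big(-\Delta_\mu - \tfrac{v}{n-1}\Big).
\end{align}
The convergence holds because the coefficients of the divergence-form operator converge in $C^0$, the drift converges in $C^0$, and $\bar S_i\to v$ in $W^{-1,p}$, which acts boundedly as a multiplier $W^{1,p}\to W^{-1,p}$.

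\textbf{Invertibility.} $L_\infty:W^{1,p}\to W^{-1,p}$ is Fredholm of index zero. If $v<0$ it is injective by positivity. If $v>0$ it is injective by the eigenvalue assumption (\eqref{e:e}), with kernel elements bootstrapped to $W^{2,p}$. So $L_\infty$ is invertible, $L_i$ is uniformly invertible for large $i$, and a quantitative inverse function theorem (the second derivative of $\mathcal F_i$ is uniformly bounded near $0$) gives $w_i$ with $\Vert w_i\Vert_{W^{1,p}}\le C\tau_i$.

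When $v=0$, the kernel of $L_\infty$ consists of the constants. We treat $c=v_i$ as an additional unknown and solve $\mathcal F_i(w,v_i)=0$ with $w$ normalized to have $\mu$-mean zero. The map $(w,c)\mapsto L_\infty w - c$ is then an isomorphism, since the cokernel is spanned by the constants paired against $d\mu$. This yields $|v_i|\le C\tau_i$.

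\textbf{Regularity and conclusion.} Lifting back, $u_i = 1+w_i$ is invariant and satisfies the equation weakly on $M_i$. Since $u_i$ is continuous, bounded away from zero and in $W^{1,p}$, we get $g_i'$ with $\Vert g_i'-\hat g_i\Vert_{W^{1,p}}\le C\tau_i$, hence $\Vert g_i'-g_i\Vert_{W^{1,p}}\to0$. The metric $g_i'$ is $\mathcal N$-invariant, and Gromov--Hausdorff convergence to $(X,g_\infty)$ holds because $g_i'$ is $C^0$-close to $g_i$.

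For smoothness we note that $\hat g_i$ is actually smooth: it is an average of $g_i$ under smooth actions, and the paper's regularity statements are uniform bounds rather than smoothness limits. Elliptic regularity for the CSC equation with smooth coefficients then makes $u_i$ smooth. One must check that the weak solution of the averaged equation on $X$ is a genuine solution on $M_i$. This holds because the weak equation tested against arbitrary test functions reduces, after fiber averaging, to testing against invariant ones, and all terms in the equation are invariant.

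\textbf{Main obstacle.} The step we expect to be hardest is making the pushdown of the $W^{-1,p}$ estimate rigorous and uniform. The estimate in Theorem~\ref{t:weak scalar curvature universal cover estimate} is stated on compact subsets of the universal cover, whose fibers are noncompact. We plan to work on the finite nilmanifold cover of $F_i^{-1}(U)$ with fiber diameter of order $\tau_i$, or on a unit-size fundamental domain after rescaling. The difficulty is that the fiber directions have scale $\tau_i$, and the constant must not blow up. We would first try to handle this by pairing only with invariant test functions, whose fiber derivatives vanish, so that only base derivatives enter and $C$ stays uniform.
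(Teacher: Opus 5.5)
Your proposal is correct and follows essentially the same route as the paper: an $\mathcal{N}$-invariant conformal deformation of the CFG metric $\hat g_i$, descended to $(X,\mu_i)$ as a map $W^{1,p}\to W^{-1,p}$ with the drift Laplacian, with the residual controlled by the Lee--LeFloch distributional scalar curvature estimate (the paper's Corollary~\ref{c:weak sclar curvature base estimate}), and the implicit function theorem applied using the eigenvalue hypothesis. The differences are minor variations within that framework. You get uniform invertibility from operator-norm convergence $L_i\to L_\infty$ to an invertible index-zero limit; this also supplies the surjectivity that the paper's contradiction argument in Proposition~\ref{p:vneq0} leaves implicit. For $v=0$ you adjoin the constant $c$ as an extra unknown, instead of using the paper's normalizing functional $v_i(\phi)$ on mean-zero spaces.
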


We outline the proof of Theorem~\ref{cfgconj}. Under the Assumption~\ref{assumption: collapsing CSC}, for each $g_i$, we can find a nearby $\hat{g}_i$, which is $\mathcal{N}$-invariant and satisfies $C^{0,\alpha}$ or $W^{1,p}$ estimates, as in Theorem~\ref{t.invariant_nearby_metric_estimate}. Then, starting with these $\mathcal{N}$-invariant metrics $\hat{g}_i$, we want to conformally perturb the metric so that $(1+\phi)^{\frac{4}{n-2}}\hat{g}_i$ will satisfy the desired scalar curvature condition. In particular, we want to solve for $\mathcal{N}$-invariant CSC metric, then, $\phi$ is chosen to be an $\mathcal{N}$-invariant perturbation. Thus, the conformal equation descends to the base. See Section~\ref{s.N-invariant laplacian} for Laplacian for $\mathcal{N}$-invariant functions. The issue is that the conformal scalar equation involves the scalar curvature, which we only know in the weak sense it is close to $v$ (See Section~\ref{s.scalar distribution}). To overcome this, we view it as a map from certain Sobolev space $W^{1,p}$ to its dual, $W^{-1,p}$, see definition in Section~\ref{s.Banach spaces}. After showing uniform invertibility of the linearized operator, the theorem will then follow upon an application of the implicit function theorem.

\subsection{Laplacian on \texorpdfstring{$\mathcal{N}$}{N}-invariant functions}
\label{s.N-invariant laplacian}
For the sequence $(M_i,g_i)$ satisfying the Assumption~\ref{assumption: collapsing CSC}, for each $i$, there exists a $\Gamma\backslash\mathcal{N}$-fibration $F_i:M_i\to X$, and a $\mathcal{N}$-invariant metric $\hat{g}_i$, which is constructed in Theorem~\ref{t.invariant_nearby_metric_estimate}. Then if $v\in \ker(dF_i)_\perp$ at $p$, then $dh(v)\in \ker(dF_i)_\perp$ at $hp$, for any $h\in \mathcal{N}_L$. Thus we have a well-defined metric, $g_{X,i} = \hat{g}_i|_{\ker dF_\perp}$ on $X$, where we identify $\operatorname{im}(dF)$ with $\ker (dF)|_\perp$, with the estimate $e^{-\tau_i} g_\infty\leq g_{X,i} \leq e^{\tau_i} g_\infty$.

Let $u\in C^\infty (M)$ be a $\mathcal{N}$-invariant function. Denote the second fundamental form:
\begin{align}
    \nabla_{\partial_i}\partial_j = \bar{\nabla}_{\partial_i}\partial_j + A(\partial_i,\partial_j)
\end{align}
where $\bar{\nabla}$ is the Levi-Civita connection restricted to the fiber. Then, we have:
\begin{align}\label{eq:invariant Laplacian}
    \Delta_{g_i} u = \Delta_{g_{X,i}} u + \langle H_i, \nabla u\rangle
\end{align}
where $H_i = \operatorname{tr} A_i$, the mean curvature vector, which we can view as a vector field on $X$.

\begin{lemma}{(Lemma 3.29 in \cite{SunZhang})}\label{l.convergence invariant metrics on base}
We have $g_{X,i}\to g_\infty$ in Cheeger-Gromov $W^{1,p}$ sense on $X$, and $H_i \to \nabla_{g_\infty}\operatorname{log}\chi$ weakly in $W^{1,p}$ and strongly in $C^{0,\alpha}$. 
\end{lemma}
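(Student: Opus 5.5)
The plan is to express both $g_{X,i}$ and $H_i$ explicitly through objects whose uniform regularity is already known: the map $F_i$, the invariant metric $\hat g_i$, and the fiber-volume functions $\chi_i$. Uniform bounds then give subsequential limits, and identification of the limits gives convergence of the whole sequence.

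\textbf{The base metrics.} Since $\hat g_i$ is $\mathcal{N}$-invariant, the dual metric on $X$ is
\begin{align}
g_{X,i}^{-1} = dF_i\,\hat g_i^{-1}\,dF_i^{T},
\end{align}
evaluated at any point of the fiber. This is well defined because the horizontal space is preserved by the action. By Corollary~\ref{c.fibration_regularity} with $k=0$, $F_i$ is uniformly bounded in $W^{3,p}$. By Theorem~\ref{t.invariant_nearby_metric_estimate}, $\hat g_i$ is uniformly bounded in $W^{1,p}_{loc}$ and is $C\tau_i$-close to $g_i$ in $C^{0,\alpha}$. Reading the formula along a $W^{3,p}$-controlled local section, as in Lemma~\ref{l.local_trivialization_regularity}, and using that $W^{1,p}$ is an algebra for $p>n$, we get a uniform $W^{1,p}$ bound on $g_{X,i}$. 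Hence a subsequence converges weakly in $W^{1,p}$ and strongly in $C^{0,\beta}$. The almost-Riemannian-submersion property in Theorem~\ref{t.cfg}, together with the $C^0$ closeness of $\hat g_i$ to $g_i$, gives $e^{-\tau_i}g_\infty\le g_{X,i}\le e^{\tau_i}g_\infty$. So every subsequential limit equals $g_\infty$, and the full sequence converges.

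\textbf{The mean curvature.} I would identify $H_i$ exactly rather than estimate it. Because $\hat g_i$ is invariant and the fibers are $\hat g_i$-orthogonal to the horizontal distribution, the coarea formula gives
\begin{align}
dvol(\hat g_i)=\big(\text{fiber volume form}\big)\wedge F_i^*dvol(g_{X,i}).
\end{align}
Pushing forward then gives $\operatorname{vol}(M_i)\,\chi_i\,dvol(g_{X,i})$, where $\chi_i$ is the $\hat g_i$-fiber volume normalized as in \eqref{eq:relative volume form}. By Theorem~\ref{t:chiconv}, $\chi_i(\hat g_i)=\chi_i(g_i)$.

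Now take any smooth function $u$ on $X$, pull it back to an invariant function, and integrate $\Delta_{\hat g_i}u$ over $M_i$. The integral vanishes, and by \eqref{eq:invariant Laplacian} we obtain
\begin{align}
0=\int_X\big(\Delta_{g_{X,i}}u+\langle H_i,\nabla u\rangle\big)\chi_i\,dvol(g_{X,i}).
\end{align}
Integrating the first term by parts and letting $u$ be arbitrary forces
\begin{align}
H_i=\nabla_{g_{X,i}}\log\chi_i=g_{X,i}^{-1}\,d\log\chi_i .
\end{align}
Equivalently, one can use the first-variation formula for the fiber volume along horizontal lifts, which needs only one derivative of $\hat g_i$.

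This step is the main obstacle. One has to check that these identities hold when $\hat g_i$ is only $W^{1,p}$ and $H_i$ only $L^p$ a priori. I would handle this by working in the weak (distributional) formulation throughout. That formulation involves only first derivatives of the metric and of the $W^{3,p}$ map $F_i$, so the identities make sense at this regularity.

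\textbf{Convergence of $H_i$.} By Theorem~\ref{t:chiconv}, $\chi_i\to\chi$ in $C^{1,\beta}$ and weakly in $W^{2,p}$, with $\chi$ bounded below. Therefore $d\log\chi_i\to d\log\chi$ strongly in $C^{0,\beta}$ and weakly in $W^{1,p}$. Combined with $g_{X,i}^{-1}\to g_\infty^{-1}$ strongly in $C^{0,\beta}$ and weakly in $W^{1,p}$ from the first step, the product converges to $\nabla_{g_\infty}\log\chi$ strongly in $C^{0,\alpha}$, after adjusting the Hölder exponents. The product is also uniformly bounded in $W^{1,p}$, since $W^{1,p}$ is an algebra for $p>n$. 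Weak limits are unique, so $H_i\rightharpoonup\nabla_{g_\infty}\log\chi$ weakly in $W^{1,p}$, and the whole sequence converges.
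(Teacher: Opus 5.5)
Your handling of $g_{X,i}$ is correct and is essentially the paper's argument. The paper also reads $g_{X,i}^{-1}$ off as $\langle\nabla_{\hat g_i}(u_\alpha\circ F_i),\nabla_{\hat g_i}(u_\beta\circ F_i)\rangle$, i.e.\ $dF_i\,\hat g_i^{-1}dF_i^{T}$. It identifies the limit by passing to the equivariant limit of local universal covers, where $F_i\circ\pi_i\to\pi_\infty$ is a Riemannian submersion. Your identification via the pinching $e^{-\tau_i}g_\infty\le g_{X,i}\le e^{\tau_i}g_\infty$ plus compactness is simpler.

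The gap is in your derivation of $H_i=\nabla\log\chi_i$. Integrating $\Delta_{\hat g_i}(u\circ F_i)$ over the closed manifold $M_i$ and integrating by parts on $X$ gives only
\[
\int_X\big\langle \chi_iH_i-\nabla\chi_i,\nabla u\big\rangle\,dvol(g_{X,i})=0\qquad\text{for all }u\in C^\infty(X),
\]
that is, $\operatorname{div}_{g_{X,i}}(\chi_iH_i-\nabla\chi_i)=0$. Pairing against gradients cannot detect a divergence-free field, such as the metric dual of any harmonic or co-exact $1$-form, and such fields exist on every closed $X$. You have no a priori reason for $\chi_iH_i$ to be a gradient, so letting $u$ vary does not force $\chi_iH_i=\nabla\chi_i$.

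The first-variation formula you mention in passing is not equivalent to this identity. It is strictly stronger, and it is what is needed. Concretely, replace $\nabla u$ by the horizontal lift $\tilde\xi$ of an arbitrary vector field $\xi$ on $X$. Since $F_i\colon(M_i,\hat g_i)\to(X,g_{X,i})$ is a Riemannian submersion, the same computation that gives \eqref{eq:invariant Laplacian} yields $\operatorname{div}_{\hat g_i}\tilde\xi=(\operatorname{div}_{g_{X,i}}\xi+\langle H_i,\xi\rangle)\circ F_i$. Then $\int_{M_i}\operatorname{div}_{\hat g_i}\tilde\xi=0$, together with your coarea formula, gives $\int_X\langle\chi_iH_i-\nabla\chi_i,\xi\rangle\,dvol(g_{X,i})=0$ for every $\xi$, hence the pointwise identity. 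Alternatively, integrate over $F_i^{-1}(B)$ for small balls $B$ and keep the boundary term.

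The paper does not derive this identity; it simply reads $H_i=\nabla\log\chi_i$ off the form $\hat g_i=F_i^*g_{X,i}+(c_i\chi_i)^{2/(n-d)}g_{\Gamma\backslash\mathcal N}(\omega,\omega)$. Once the identity is in place, the rest of your argument goes through. That includes the $W^{2,p}$ bound on $\chi_i$ from Theorem~\ref{t:chiconv}, which, like the paper, you obtain via $\chi_i(\hat g_i)=\chi_i(g_i)$. It also includes the product argument with factors that are $W^{1,p}$-bounded and $C^{0,\beta}$-convergent. Finally, for fixed $i$ all these objects are smooth, so the identity holds classically and no weak formulation is needed; finite regularity only matters for the uniform bounds.
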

\begin{proof}
    For an open neighborhood $U\subseteq X$, let $u_1, ..., u_d : U\to \mathbb{R}$ be the local coordinate. Then, by the equivariant Gromov-Hausdorff convergence:
    \begin{center}
    \begin{tikzcd}
  & (\Tilde{F_j^{-1}(U)},\Tilde{g}_j, \Gamma_j,\Tilde{p}_j) \arrow[r, swap, "eq GH"] \arrow[d, "\pi_j"]
   & (\Tilde{U}_\infty,\Tilde{g}_\infty, \mathcal{N},\Tilde{p}_\infty) \arrow[d, swap, "\pi_\infty"]\\
  & (F_j^{-1}(U), g_j) \arrow[r, "F_j"]
   & (U,g_\infty)
\end{tikzcd}
\end{center}
Then $F_i\circ \pi\to \pi_\infty$, the Riemannian submersion. By Theorem~\ref{t.cfg}, $F_i$ is of uniform $W^{3,p}$ bounded. Then on the upstairs $u_\alpha\circ F_i\circ \pi_i \to u_\alpha\circ \pi_\infty$ in $W^{3,p}_{loc}$. Thus, by the $W^{1,p}$ estimate for invariant metric $g_i$ in Theorem~\ref{t.invariant_nearby_metric_estimate}, we have
\begin{align}
    \lim_{i\to\infty}\langle \nabla_{\tilde{g}_i}(u_\alpha\circ F_i\circ \pi_i),\nabla_{\tilde{g}_i}(u_\beta\circ F_i\circ \pi_i)\rangle = \langle \nabla_{\tilde{g_\infty}}(u_\alpha\circ \pi_\infty),\nabla_{\tilde{g_\infty}}(u_\beta\circ \pi_\infty)\rangle
\end{align}
descend to the base, we have the desired $W^{1,p}$ convergence on the metrics in $U$. 

For the mean curvature, by the definition of \eqref{eq:relative volume form}, we write the invariant metric $\hat{g}_i$ as 
\begin{align}
    \hat{g}_i = F_i^* g_{X,i} + (c_i\chi_i)^{\frac{2}{n-d}} g_{\Gamma\backslash\mathcal{N}}(\omega(\cdot), \omega(\cdot))
\end{align}
where $\omega$ is the connection form for the fiber bundle, $c_i = \operatorname{vol}(M_i)$ is the normalization constant, $g_{\Gamma\backslash\mathcal{N}}$ is the left invariant metric on $\Gamma\backslash \mathcal{N}$ with unit volume (See proof of Theorem~\ref{t.invariant_nearby_metric_estimate}). By Theorem~\ref{t:chiconv}, $\chi_i$ is unchanged under the averaging construction, thus the regularity follows the regularity of $\chi_i$ for the original metrics.
Now the mean curvature for the fiber is $H_i = \nabla_{g_i}\operatorname{log}\chi_i$. Since the fibration is convergent in $W^{3,p}$, we have $A_i\to A_\infty$ in $W^{1,p}$. Hence $H_i\to H_\infty = \nabla_{g_\infty}\operatorname{log}\chi$ weakly in $W^{1,p}$ 
\end{proof}
\begin{remark}
    Note that we lose one derivative in the convergence of $g_{X,i}$. For now, we do not know if the metric $g_{X,i}$ has uniformly bounded Riemannian curvature, the regularity is induced from Theorem~\ref{t.invariant_nearby_metric_estimate}. But we do not need the bounded curvature assumption on $g_{X,i}$ to carry out our analysis.  Furthermore, in the proof below we do not actually need to use the weak convergence of $H_i$, we have stated here for completeness. 
\end{remark}

\subsection{ Banach spaces for \texorpdfstring{$\mathcal{N}$}{N}-invariant functions}\label{s.Banach spaces}

As mentioned in the beginning of Section~\ref{section: main}, we need to do the analysis for the $\mathcal{N}$-invariant functions on $\tilde{F_i^{-1}(U)}$. Thus, we need to establish the corresponding Banach space for these $\mathcal{N}$-invariant objects.

We first consider the $L^p$ space. Let $U\subseteq X$ be a simply connected open set in $X$. Consider a $\mathcal{N}$-invariant function $u\in L^p_{loc}(\tilde{F_i^{-1}(U))}$. By Lemma~\ref{l.local_trivialization_regularity}, we have a local trivialization $\tilde{\Phi_{U,i}} : \tilde{F_i^{-1}(U)}\to U\times\mathcal{N}$, with uniform $C^{2,\alpha}$ and $W^{3,p}_{loc}$ estimate. Choose a open neighborhood of identity, $K\subseteq \mathcal{N}$. We may identify $K$ with $K_L\subseteq\mathcal{N}_L$, the corresponding left action. Consider the open subset:
\begin{align}
    K_i:=\tilde{\Phi_{U,i}^{-1}}(U,K)\subseteq \tilde{F_i^{-1}(U)}
\end{align}
For the $\mathcal{N}$-invariant metric, $\hat{g}_i$. Let $\tilde{g}_i$ be the lift on the local universal cover $\tilde{F_i^{-1}(U)}$. By Theorem~\ref{t.invariant_nearby_metric_estimate}, $\tilde{g}_i$ converges to the same limit $\tilde{g}_\infty$ in $C^{1,\alpha}$ Cheeger-Gromov sense. Write:
\begin{align}
    \tilde{g}_\infty = \pi_\infty^* g_\infty + \chi^{\frac{2}{n-d}}\cdot g_{\mathcal{N}}(\omega_\infty(\cdot),\omega_\infty(\cdot))
\end{align}
for some left invariant metric $g_{\mathcal{N}}$ and some connection form $\omega_\infty$. Then, 
\begin{align}
    \tilde{g}_i = \tilde{F_i^* g_i} + (c_i\chi_i)^{\frac{2}{n-d}}\cdot g_{\mathcal{N}}(\omega_i(\cdot),\omega_i(\cdot))
\end{align}
where $c_i\to 1$, $\chi_i$ is defined in \eqref{eq:relative volume form}, $\chi_i\to\chi$ in $C^{1,\alpha}$, $\omega_i\to\omega_\infty$ in $C^{1,\alpha}$.

Denote $\tilde{h}_i := (\tilde{\Phi_{U,i}^{-1}})^*\tilde{g}_i$ as the push-forward metric. Let $dvol(g_i)$ be the $\mathcal{N}$-invariant volume form given by the metric $g_i$. For a $\mathcal{N}$-invariant function $u$, denote $\Pi_i$ as the map that maps $u$ to a function on $X$, $\Pi_i (u)$. Thus, for any $x'\in F_i^{-1}(x)$, $u(x') = \Pi_i(u)(x)$. Then
\begin{align}
    \Vert u\Vert_{L^p(\bar{K_i})}^p &= \int _{K_i} |u|^p d \operatorname{vol}(\tilde{g}_i) = \int_{U\times K} |u|^p d\operatorname{vol(\tilde{h}_i)} \\
    &= \int _K c_i dvol(g_{\mathcal{N}})\cdot \int_U |\Pi_i(u)|^p \chi_i dvol(g_{X,i})
\end{align}
Now, we introduce the measure on $X$, $d\mu_i := \chi_i dvol(g_{X,i})$.
\begin{definition}
    Define the $L^p$ space $L^p(X,\mu_i)$ so that if $f\in L^p(X, \mu_i)$ then
\begin{align}
    \int_X f^p d \mu_i := \int_X f^p\chi_i dvol(g_{X,i}) < \infty. 
\end{align}
\end{definition}
Thus, we have if $\Vert u\Vert_{L^p_{loc}}$ is small, then $\Vert \Pi_i(u)\Vert_{L^p(X,\mu_i)}$ is small. Similarly, we can define the spaces $W^{k,p}(X,\mu_i)$.
\begin{definition}
    Define the $W^{k,p}(X,\mu_i)$ so that if $f\in W^{k,p}(X,\mu_i)$ then
    \begin{align}
        \sum_{i=0}^k \Vert\nabla^if\Vert_{L^p(X,\mu_i)} < \infty.
    \end{align}
\end{definition}
By Lemma~\ref{l.local_trivialization_regularity}, the local trivialization is at least $C^{2,\alpha}$, then, if $\Vert u \Vert_{W^{1,p}_{loc}}$ is small, then $\Vert \Pi_i(u) \Vert_{W^{k,p}(X,\mu_i)}$ is small.

For our application, we also want to define the negative Sobolev space $W^{-1,p}(X,\mu_i)$.

\begin{definition}
    Define the $W^{-1,p}(W,\mu_i)$ to be the dual space of $W^{1,q}(X,\mu_i)$, with $\frac{1}{p} + \frac{1}{q} = 1$. For $f\in W^{-1,p}(X,\mu_i)$,
    \begin{align}
        \Vert f\Vert_{W^{-1,p}(X,\mu_i)} = \sup \{ |\langle f, \phi\rangle| : \phi \in W^{1,q}(X,\mu_i), \Vert\phi\Vert_{W^{1,q}(X,\mu_i)}\leq 1\}.
    \end{align}
\end{definition}
Consider a $\mathcal{N}$-invariant function $f$ in $W^{-1,p}_{loc}$ as a functional, on $\tilde{F_i^{-1}(U)}$. For any $\mathcal{N}$-invariant $W^{1,q}_{loc}$-function $\phi$. Then, choose the same $K_i$ as before
\begin{align}
    \langle f,\phi\rangle &= \int_{K_i} f\cdot \phi dvol(\tilde{g_i}) = \int_K c_i dvol(g_\mathcal{N})\cdot \int_U \Pi_i(f)\cdot \Pi_i(\phi)\cdot\chi_i dvol(g_{X,i})\\
    & = \int_K c_i dvol(g_\mathcal{N})\cdot \langle \Pi_i(f), \Pi_i(\phi) \rangle.
\end{align}
Thus, if $W^{-1,p}_{loc}$ norm is small, then $W^{-1,p}(X,\mu_i)$ norm is small.

Now, for the sequence of collapsing manifolds satisfying Assumption~\ref{assumption: collapsing CSC}, we have the following estimate.
\begin{corollary}\label{c:weak sclar curvature base estimate}
    Let $\{(M_i^n ,g_i )\}$ be a sequence of manifolds satisfying Assumption~\ref{assumption: collapsing CSC}. Then for any $p > n$, we have
    \begin{align}
        \Vert \Pi_i(S({\hat{g}_i})) - v\Vert_{W^{-1,p}(X,\mu_i)} \leq C(n, C_0, p,X)\cdot \tau_i,
    \end{align}
    where $\hat{g}_i$ is the nearby $\mathcal{N}$-invariant metric constructed from $g_i$ in Theorem~\ref{t.invariant_nearby_metric_estimate}.
\end{corollary}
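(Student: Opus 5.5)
The plan is to combine the upstairs distributional estimate of Theorem~\ref{t:weak scalar curvature universal cover estimate} with the transfer from $\mathcal{N}$-invariant distributions on $\tilde{F_i^{-1}(U)}$ to distributions on $(X,\mu_i)$ set up in Section~\ref{s.Banach spaces}. Since $S(g_i)\equiv v$, the distribution $S(\hat{g}_i)-v$ coincides with $S(\hat{g}_i)-S(g_i)$. Theorem~\ref{t:weak scalar curvature universal cover estimate} gives, on any compact $K\subseteq\tilde{F_i^{-1}(U)}$,
\begin{align}
\Vert S(g_i)-S(\hat g_i)\Vert_{W^{-1,p}(K)}\leq C\tau_i,
\end{align}
with pairings taken against $dvol(\hat g_i)$. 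The first point to check is that $\hat{g}_i$ is $C^2$, being an average of smooth pullbacks. Hence $S(\hat g_i)$ is an honest $\mathcal{N}$-invariant function, $\Pi_i(S(\hat g_i))$ makes sense, and the distributional pairing agrees with integration.

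Next I would cover $X$ by finitely many simply connected coordinate sets $U_j$, with a partition of unity $\{\rho_j\}$ on $X$ whose $C^1$ norms are controlled. Fix a test function $\phi\in W^{1,q}(X,\mu_i)$ with $\Vert\phi\Vert_{W^{1,q}(X,\mu_i)}\le 1$. Split $\phi=\sum_j\rho_j\phi$. For each $j$, lift $\rho_j\phi$ to the $\mathcal{N}$-invariant function $(\rho_j\phi)\circ F_i$ on $\tilde{F_i^{-1}(U_j)}$.

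Then restrict to the fixed compact slab $K_i=\tilde{\Phi_{U_j,i}^{-1}}(\bar U_j',\bar K)$, where $K\subseteq\mathcal{N}$ is a fixed neighborhood of the identity. The identity from Section~\ref{s.Banach spaces} gives
\begin{align}
\int_{K_i}(S(\hat g_i)-v)\,(\rho_j\phi)\circ F_i\,dvol(\hat g_i)=c_i\,\mathrm{vol}(K)\,\langle \Pi_i(S(\hat g_i))-v,\rho_j\phi\rangle_{\mu_i}.
\end{align}
Lemma~\ref{l.local_trivialization_regularity} gives uniform $C^{2,\alpha}$ control of the trivializations, and $\chi_i$ is bounded above and below. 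Together these show that the lifted test function satisfies $\Vert(\rho_j\phi)\circ F_i\Vert_{W^{1,q}(K_i)}\le C\Vert\phi\Vert_{W^{1,q}(X,\mu_i)}$, with $C$ independent of $i$.

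The main obstacle is that the lift is not compactly supported in $K_i$ along the fiber directions, while the definition of the distribution uses $W^{1,q}_c$ test functions. I would handle this first by multiplying by a fixed cutoff $\eta(\nu)$ in the $\mathcal{N}$-variable, with $\int_K\eta\,dvol(g_{\mathcal N})=1$. By $\mathcal{N}$-invariance of $S(\hat g_i)$ and of the volume form, pairing against $\eta\cdot(\rho_j\phi)\circ F_i$ reproduces $c_i\langle\Pi_i(S(\hat g_i))-v,\rho_j\phi\rangle_{\mu_i}$. This cutoff has $i$-independent $W^{1,q}$ norm, because $K$ is a fixed region in the uniformly controlled coordinates $U\times\mathcal{N}$. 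Using the remark that the estimates are uniform on $U\times\mathcal{N}$, the constant in Theorem~\ref{t:weak scalar curvature universal cover estimate} on $K_i$ is also uniform in $i$. Applying that bound to the cut-off test function gives each piece bounded by $C\tau_i$.

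Summing over $j$ and using $c_i\to 1$, I take the supremum over $\phi$. This yields $\Vert\Pi_i(S(\hat g_i))-v\Vert_{W^{-1,p}(X,\mu_i)}\le C(n,C_0,p,X)\tau_i$.
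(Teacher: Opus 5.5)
Your argument is correct and has the same skeleton as the paper's proof. You lift the test function to an $\mathcal{N}$-invariant function on the slab $K_i=\tilde{\Phi_{U,i}^{-1}}(U,K)$. You then use the Fubini-type identity of Section~\ref{s.Banach spaces} to turn the upstairs pairing into a multiple of $\langle \Pi_i(S(\hat g_i))-v,\phi\rangle_{\mu_i}$. Finally you invoke Theorem~\ref{t:weak scalar curvature universal cover estimate} together with $S(g_i)\equiv v$.

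The real difference is how you handle the fact that the lift is not compactly supported along the fibers.

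\begin{itemize}
\item The paper pairs against the lift on $K_i$ directly. It then says the resulting boundary term on $U\times\partial K$ can be estimated by integration by parts, as in Proposition~\ref{p:Scalar curvature estimate negative sobolev}.
\item You instead multiply by a fixed cutoff $\eta$ in the $\mathcal{N}$-variable with unit Haar mass. By $\mathcal{N}$-invariance of $S(\hat g_i)$ and $dvol(\hat g_i)$, this reproduces the same base pairing. The test function is now compactly supported, so the upstairs estimate applies as a black box with no boundary term. Your explicit partition of unity on $X$ also makes precise the localization to the sets $U_j$, which the paper leaves implicit.
\end{itemize}

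Your route is the more robust one. The paper's boundary term involves the trace on a hypersurface of $T$, which is built from first derivatives of $\hat g_i-g_i$. For $k=0$ those derivatives are only uniformly small in $L^p$, so bounding that trace by $C\tau_i$ is not immediate. Your cutoff is in effect an average over a family of slabs, and it avoids this issue entirely.

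You pay the same price as the paper. To keep $\Vert\nabla\eta\Vert_{L^\infty}$ and the constant on $K_i$ independent of $i$, you need uniform control of the trivializations in the fiber variable. That is the remark after Corollary~\ref{c.invariant_metric_curvature_estimate}, used together with $c_i\to 1$.
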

\begin{proof}
    By Assumption~\ref{assumption: collapsing CSC} and Theorem~\ref{t:weak scalar curvature universal cover estimate}, we have:
    \begin{align}
        \Vert S({g_i}) -v\Vert_{W^{-1,p}(K)} \leq C(n,C_0, p, X,K)\cdot\tau_i.
    \end{align}
    Note that $S({\hat{g}_i})$ and $v$ are both $\mathcal{N}$-invariant functions. Let $\phi\in W^{1,q}_c(X,\mu_i)$, consider the $\mathcal{N}$-invariant function $\tilde{\phi}$ on $K_i$, where $\Pi_i(\tilde{\phi}) = \phi$. Then, we have
    \begin{align}
        \langle \Pi_i(S({\hat{g}_i})) - v, \phi \rangle = C(K)\cdot \int_{K_i} (S(\hat{g}_i) - v)\tilde{\phi}dvol(\tilde{g}_i).
    \end{align}
    By the integration by parts formula, similar to Proposition \ref{p:Scalar curvature estimate negative sobolev}, we can estimate the integral and the boundary term. The result follows.
\end{proof}

\subsection{Conformal perturbation}\label{s:conformal perturbation}
We consider the conformal metric $\tilde{g}_{i} = \psi^{\frac{4}{n-2}} \hat{g}_{i}$,
and for the scalar curvature, we have 
\begin{align}
    -4 \frac{n-1}{n-2} \Delta_{\hat{g}_i} \psi + S(\hat{g}_i) \cdot \psi = S(\tilde{g}_i) \psi^{\frac{n+2}{n-2}}.
\end{align}
Writing $\psi = 1 + \phi$, we have 
\begin{align}
 - 4 \frac{n-1}{n-2} \Delta_{\hat{g}_i} \phi + S(\hat{g}_{i}) \cdot (1 + \phi) =  S(\tilde{g}_{i}) (1 + \phi)^{\frac{n+2}{n-2}},
\end{align}
which we write as 
\begin{align}
 S(\tilde{g}_{i}) = (1 + \phi)^{- \frac{n+2}{n-2}} \Big( - 4 \frac{n-1}{n-2} \Delta_{\hat{g}_{i}} \phi + S(\hat{g}_{i}) \cdot (1 + \phi)  \Big)
\end{align}

Since our goal is to construct an $\mathcal{N}$-invariant CSC metric, we want to conformally perturb the metric in an $\mathcal{N}$-invariant way. 
For this, we define a mapping $P_{\hat{g}_i}$ by
\begin{align}
    \phi\mapsto  - 4 \frac{n-1}{n-2} \Delta_{\hat{g}_i} \phi + S(\hat{g}_{i}) \cdot (1 + \phi)    -  v\cdot (1 + \phi)^{ \frac{n+2}{n-2}}. 
\end{align}
We denote $\Theta_i: = v -S(\hat{g}_i)$. Then we can write $P$ as
\begin{align}
    \phi \mapsto - 4 \frac{n-1}{n-2} \Delta_{\hat{g}_i} \phi - (v\cdot\frac{4}{n-2} - \Theta_i)\cdot \phi - \Theta_i - v\cdot \Big((1+\phi)^{\frac{n+2}{n-2}} - 1 - \frac{n+2}{n-2}\cdot \phi \Big).
\end{align}

Write
\begin{align} \label{1Pexp}
P_g(h) = P_g(0) + L_gh + Q_g(h),
\end{align}
where 
\begin{align}
    &P_g(0):= \Theta\\
    &L_g : = - 4 \frac{n-1}{n-2} \Delta_{g}  - (v\cdot\frac{4}{n-2} - \Theta)\\
    &Q_g(h) := - v\cdot \Big((1+h)^{\frac{n+2}{n-2}} - 1 - \frac{n+2}{n-2}\cdot h \Big). 
\end{align}

When $\phi\in C^\infty(M_i)$ is an $\mathcal{N}$-invariant function, then it is easy to see $P_{\hat{g}_i}(\phi)$ is an $\mathcal{N}$-invariant function. By \eqref{eq:invariant Laplacian}, we have
\begin{align}
    \Delta_{\hat{g}_i} \phi = \Delta_{g_{X,i}} \Pi_i(\phi)+ \langle H_i, \nabla \Pi_i(\phi)\rangle = \Delta_{g_{X,i}} \Pi_i(\phi) + \langle \nabla\operatorname{log}\chi_i, \nabla \Pi_i(\phi)\rangle,
\end{align}
where $\Pi_i(\phi)\in C^\infty(X)$. Denote $\Delta_{\mu_i} = \Delta_{g_{X.i}} + \langle \nabla \operatorname{log}\chi_i, \cdot\rangle$. Define:
\begin{align}
    \hat{L}_{\hat{g}_i} : = - 4 \frac{n-1}{n-2} \Delta_{\mu_i}  - (v\cdot\frac{4}{n-2} - \Theta_i).
\end{align}
Replace $L_{\hat{g}_i}$ with $\hat{L}_{\hat{g}_i}$, we view $P_{\hat{g}_i}: C^\infty(X)\to C^\infty(X)$. In particular, We have the following mapping property of $P_g$. 
\begin{proposition}
    The mapping $P_{\hat{g}_i}$ maps $W^{1,p}(X,\mu_i)$ to  $W^{-1,p}(X,\mu_i)$, for any $p> d$, i.e.,
    \begin{align}
        P_{\hat{g}_i} : W^{1,p}(X,\mu_i) \to W^{-1,p}(X,\mu_i).
    \end{align}
\end{proposition}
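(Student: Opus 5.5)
We decompose $P_{\hat{g}_i}$ as in \eqref{1Pexp}, $P_{\hat g_i}(\phi) = \Theta_i + \hat{L}_{\hat g_i}\phi + Q_{\hat g_i}(\phi)$, and check separately that each piece maps $W^{1,p}(X,\mu_i)$ into $W^{-1,p}(X,\mu_i)$. Throughout we use that $X$ has dimension $d$ and $p>d$, so Morrey's embedding gives $W^{1,p}(X,\mu_i)\hookrightarrow C^{0,\beta}(X)$. This is legitimate because $\chi_i$ is uniformly bounded above and below by Theorem~\ref{t:chiconv}, and $g_{X,i}$ is uniformly equivalent to $g_\infty$. Hence the weighted spaces coincide with the unweighted ones, with uniformly equivalent norms. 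In particular, $W^{1,p}(X,\mu_i)$ is a Banach algebra, and the dual exponent $q=p/(p-1)$ satisfies $q<p$.

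\textbf{The constant term.} By Corollary~\ref{c:weak sclar curvature base estimate}, $\Theta_i = v - \Pi_i(S(\hat g_i))$ lies in $W^{-1,p}(X,\mu_i)$, with norm at most $C\tau_i$.

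\textbf{The Laplacian term.} We define $-\Delta_{\mu_i}\phi$ weakly via the Dirichlet form, as in \eqref{e:ibp}:
\begin{align}
\langle -\Delta_{\mu_i}\phi,\eta\rangle = \int_X \langle\nabla\phi,\nabla\eta\rangle_{g_{X,i}}\, d\mu_i .
\end{align}
This is the correct pairing because $\Delta_{\mu_i}$ is the $\mu_i$-self-adjoint operator. Since $g_{X,i}$ is $W^{1,p}$, and in particular $C^0$, Hölder's inequality gives
\begin{align}
|\langle -\Delta_{\mu_i}\phi,\eta\rangle| \le C\|\phi\|_{W^{1,p}}\|\eta\|_{W^{1,q}} .
\end{align}
The zeroth-order part of $\hat L_{\hat g_i}$ is $(v\frac{4}{n-2})\phi$ plus the product $\Theta_i\phi$. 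The first of these is in $L^p\subset W^{-1,p}$.

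\textbf{The product $\Theta_i\phi$.} This is the one genuinely delicate point, since $\Theta_i$ is only a distribution. We define it by duality, $\langle\Theta_i\phi,\eta\rangle := \langle\Theta_i,\phi\eta\rangle$. For this we need $\phi\eta\in W^{1,q}$ with
\begin{align}
\|\phi\eta\|_{W^{1,q}}\le C\|\phi\|_{W^{1,p}}\|\eta\|_{W^{1,q}} .
\end{align}
This follows from $\nabla(\phi\eta)=\eta\nabla\phi+\phi\nabla\eta$. The term $\phi\nabla\eta$ is controlled by $\|\phi\|_{C^0}\|\nabla\eta\|_{L^q}$. The term $\eta\nabla\phi$ is controlled by $\|\eta\|_{L^{r}}\|\nabla\phi\|_{L^p}$ with $1/r = 1/q-1/p$. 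Here $r$ satisfies $1/r = 1-2/p$, and $W^{1,q}\hookrightarrow L^r$ on the $d$-dimensional $X$ precisely when $1/r\ge 1/q-1/d$. That condition holds because $1/p<1/d$. We therefore obtain
\begin{align}
\|\Theta_i\phi\|_{W^{-1,p}}\le C\|\Theta_i\|_{W^{-1,p}}\|\phi\|_{W^{1,p}} .
\end{align}
We expect this multiplier estimate, together with the earlier justification that $\Theta_i$ is a functional on the $W^{1,q}(X,\mu_i)$ test class (the content of Corollary~\ref{c:weak sclar curvature base estimate}), to be the main obstacle.

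\textbf{The nonlinear term.} Finally, $Q_{\hat g_i}(\phi)= -v\big((1+\phi)^{\frac{n+2}{n-2}}-1-\frac{n+2}{n-2}\phi\big)$. For $\phi\in W^{1,p}$ with $\|\phi\|_{C^0}<1$, this is a smooth function of $\phi$, hence lies in $W^{1,p}\subset L^p\subset W^{-1,p}$, by the chain rule and the $C^0$ bound. If one wants the map on all of $W^{1,p}$, we restrict to the open set $\{1+\phi>0\}$, which is all that the implicit function theorem argument will use.

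Summing the pieces shows that $P_{\hat g_i}$ maps (a neighborhood of $0$ in) $W^{1,p}(X,\mu_i)$ into $W^{-1,p}(X,\mu_i)$. The bounds above also show that $P_{\hat g_i}$ is $C^1$ there.
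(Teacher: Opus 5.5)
Your proposal is correct and follows the paper's route. Like the paper, you define $\Delta_{\mu_i}$ weakly through the $\mu_i$-Dirichlet form (using self-adjointness), bound it by H\"older, and use the embedding $W^{1,p}\hookrightarrow C^0$ for $p>d$ to handle the zeroth-order and nonlinear terms.

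Your duality and multiplier treatment of $\Theta_i\phi$ adds rigor that the paper's short proof leaves implicit; the same estimate is effectively what Proposition~\ref{p:vneq0} relies on later. Two small patches are needed. First, Corollary~\ref{c:weak sclar curvature base estimate} is only stated for $p>n$, so for $d<p\le n$ add that on compact $X$ one has $\|f\|_{W^{-1,p_1}}\le C\|f\|_{W^{-1,p_2}}$ whenever $p_1\le p_2$. Second, your claim $q<p$ requires $p>2$, which can fail only when $d=1$.
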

\begin{proof}
    Similar to the discussion in Subsection~\ref{sec:convergence_eigenvalue}, $\Delta_{\mu_i}  $ is a self-adjoint operator with respect to the normalized volume form $d\mu_i = \chi_i dvol(g_i)$. Thus, for $h\in W^{1,p}(X,\mu_i)$, for any $\psi\in W^{1,p^*}(X,\mu_i)$, with $\frac{1}{p} + \frac{1}{p^*} = 1$, we have $\langle\Delta_{\mu_i} h, \psi\rangle = \int_X -\langle\nabla h, \nabla \psi\rangle d\mu_i $, which defines a bounded functional. Since $p>d$, by Sobolev embedding, $\Vert h \Vert_{C^0}\leq C(X,g_{X,i})\Vert h \Vert_{W^{1,p}(X,\mu_i)}$, then the result follows.
\end{proof}

The following proposition regarding the structure of the
operator $P_g$ will be key in order to apply the implicit function theorem later. 
\begin{proposition} \label{1quadest} 
We have the following properties for $P_{g_i}$:  

\noindent $(i)$ For $i \gg 0$, there exists a constant $C$ so that for $p > n$
\begin{align}
\Vert P_{\hat{g}_i}(0) \Vert_{W^{-1,p}(X,\mu_i)} \leq C \tau_i. 
\end{align}

\noindent $(ii)$ The linearized operator of $P_{\hat{g}_i}$ at $h=0$ is given by $\hat{L}_{\hat{g}_i}$.

\noindent $(iii)$  
For $h_j\in W^{1,p}(X,\mu_i)$ with
$\|h_j \|_{W^{1,p}(X,\mu_i)} < s_0$ sufficiently small, $j =1,2$.
There exists a constant $C_2 = C_2(s_0)$ so that $Q_g$ satisfies the following estimate:
\begin{align} \label{1quadstructure}
\Vert Q_{\hat{g}_i}(h_1) -  Q_{\hat{g}_i}(h_2)\Vert_{W^{-1,p}(X,\mu_i)} \leq
C_2(  \Vert h_1 \Vert_{W^{1,p}(X,\mu_i)}
+ \Vert h_2 \Vert_{W^{1,p}(X,\mu_i)}) \cdot
\Vert  h_1 - h_2 \Vert_{W^{1,p}(X,\mu_i)}.
\end{align}
\end{proposition}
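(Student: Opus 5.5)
The three items are of quite different difficulty, and I would treat them separately.

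\emph{Item (i).} Here $P_{\hat g_i}(0)=\Theta_i=v-\Pi_i(S(\hat g_i))$. Up to sign, this is exactly the quantity bounded in Corollary~\ref{c:weak sclar curvature base estimate}. So I would simply invoke that corollary and obtain $\Vert P_{\hat g_i}(0)\Vert_{W^{-1,p}(X,\mu_i)}\le C(n,C_0,p,X)\tau_i$. No further work should be needed, apart from checking that $C$ is uniform in $i$. This follows because the corollary's constant depends only on $n,C_0,p,X$, and because the comparison $e^{-\tau_i}g_\infty\le g_{X,i}\le e^{\tau_i}g_\infty$ together with $\chi_i\to\chi$ in $C^{1,\beta}$ (Theorem~\ref{t:chiconv}) makes the norms on $W^{\pm1,p}(X,\mu_i)$ uniformly equivalent to the fixed norms on $(X,g_\infty,\mu)$.

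\emph{Item (ii).} This follows from the expansion \eqref{1Pexp}. The term $P_{\hat g_i}(0)$ is constant in $h$, and $\hat L_{\hat g_i}$ is linear in $h$. So it suffices to show that $Q_{\hat g_i}(h)=o(\Vert h\Vert_{W^{1,p}})$ in $W^{-1,p}(X,\mu_i)$, and this is immediate from item (iii) with $h_2=0$, since $Q(0)=0$. One subtlety is that $\hat L_{\hat g_i}$ contains the coefficient $\Theta_i$, which is only a distribution. The product $\Theta_i\cdot h$ must therefore be interpreted as the pairing $\langle\Theta_i,h\psi\rangle$. This is a well-defined element of $W^{-1,p}$ because for $h\in W^{1,p}$ with $p>d$ and $\psi\in W^{1,q}$, the product $h\psi$ lies in $W^{1,q}$, with norm bounded by $C\Vert h\Vert_{W^{1,p}}\Vert\psi\Vert_{W^{1,q}}$ via Sobolev embedding into $C^0$. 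With this interpretation, $\hat L_{\hat g_i}$ is a bounded linear map $W^{1,p}\to W^{-1,p}$, and item (ii) reduces to item (iii).

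\emph{Item (iii).} This is the only part with genuine content. The idea is that $Q$ involves no derivatives of $h$, so it is controlled by its $L^\infty$ behaviour. Write $G(t)=-v\big((1+t)^{\frac{n+2}{n-2}}-1-\frac{n+2}{n-2}t\big)$. Then $G(0)=G'(0)=0$, and $G$ is smooth for $|t|\le1/2$. By the mean value theorem,
\[
|G(h_1)-G(h_2)|\le \sup_{|t|\le \max|h_j|}|G'(t)|\,|h_1-h_2|\le C(|h_1|+|h_2|)|h_1-h_2|.
\]
Next I would choose $s_0$ so small that Sobolev embedding ($p>n\ge d$) gives $\Vert h_j\Vert_{C^0}\le C_S\Vert h_j\Vert_{W^{1,p}}\le 1/2$. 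Here $C_S$ is uniform in $i$ by the same norm-equivalence argument as in item (i). Then, using the crude bound $\Vert f\Vert_{W^{-1,p}}\le \Vert f\Vert_{L^p(\mu_i)}\le C\Vert f\Vert_{L^\infty}$ (recall $\mu_i$ has total mass about $1$),
\[
\Vert Q(h_1)-Q(h_2)\Vert_{W^{-1,p}}\le C\Vert G(h_1)-G(h_2)\Vert_{L^\infty}\le C\,C_S^3(\Vert h_1\Vert_{W^{1,p}}+\Vert h_2\Vert_{W^{1,p}})\Vert h_1-h_2\Vert_{W^{1,p}},
\]
which is \eqref{1quadstructure}.

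\emph{Main obstacle.} I expect the only delicate point to be the uniformity in $i$ of the Sobolev constant and of the duality pairing. Both rest on $g_{X,i}\to g_\infty$ in $W^{1,p}$ (Lemma~\ref{l.convergence invariant metrics on base}) and on $\chi_i$ being uniformly bounded above and below. The positivity of $\chi$ holds because it is a limit of normalized fiber volumes, which by the uniform curvature bound are uniformly comparable to their average.
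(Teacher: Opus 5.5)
Your proposal is correct and is essentially the paper's proof. Item (i) is read off from Corollary~\ref{c:weak sclar curvature base estimate}, (ii) comes from the expansion \eqref{1Pexp}, and (iii) is Taylor's theorem for the analytic nonlinearity, which you make precise via $W^{1,p}\hookrightarrow C^0$ and $L^\infty\hookrightarrow W^{-1,p}$; you also supply the uniformity in $i$ and the distributional meaning of $\Theta_i h$, which the paper leaves implicit.

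One small caution for (ii): if you expand $S(\hat g_i)(1+h)-v(1+h)^{\frac{n+2}{n-2}}$ directly instead of citing \eqref{1Pexp}, the linear coefficient is $-\big(\frac{4v}{n-2}+\Theta_i\big)$ and $P_{\hat g_i}(0)=-\Theta_i$, so the sign in front of $\Theta_i$ in the displayed $\hat L_{\hat g_i}$ is a typo. This is harmless because $\Vert\Theta_i\Vert_{W^{-1,p}(X,\mu_i)}=O(\tau_i)$.
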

\begin{proof}
    The proof of (i) follows Corollary~\ref{c:weak sclar curvature base estimate}. 
    \begin{align}
        \Vert P_{g_i}(0)\Vert_{W^{-1,p}(X,\mu_i)} = \Vert \Theta_i \Vert_{W^{-1,p}(X,\mu_i)} \leq C\tau_i.
    \end{align}
The proof of (ii) is by direct computation. The estimate in (iii) easily follows from Taylor's theorem since the nonlinear term is an analytic function of $h$, for $h$ sufficiently small. \end{proof}
We also have the following Calder\'on-Zygmund estimate on the base with respect to the $d\mu_i$ measure.
\begin{proposition}\label{p:elliptic estimate} For $p>d$, and $i \gg 1$, there exists a constant $C$, independent of $i$ such that 
\begin{align}
\label{1wse}
\Vert h \Vert_{W^{1,p}(X,\mu_i)} \leq C \Big(   \Vert \hat{L}_{\hat{g}_i} h \Vert_{W^{-1,p}(X,\mu_i)}
+ \Vert h \Vert_{W^{-1,p}(X,\mu_i)} \Big).
\end{align}
\end{proposition}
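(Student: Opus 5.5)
The plan is to treat $\hat{L}_{\hat{g}_i}$ as a uniformly elliptic divergence-form operator on $(X,g_{X,i})$ with coefficients bounded in $C^{0,\alpha}$ uniformly in $i$, and to prove the usual $W^{1,p}\to W^{-1,p}$ a priori estimate by localization and freezing coefficients. The zeroth-order term $\Theta_i$ is a distribution, so it must be split off separately.

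First I would put the operator in divergence form. Since $\Delta_{\mu_i} h = \chi_i^{-1}\operatorname{div}_{g_{X,i}}(\chi_i \nabla h)$, the pairing against a test function $\psi\in W^{1,q}(X,\mu_i)$ is
\begin{align}
\langle -\Delta_{\mu_i} h,\psi\rangle = \int_X \chi_i\, g_{X,i}^{ab}\,\partial_a h\,\partial_b\psi \, dvol(g_{X,i}).
\end{align}
In local coordinates on a fixed finite atlas of $X$, the coefficient matrix $a_i^{ab}=\chi_i g_{X,i}^{ab}\sqrt{\det g_{X,i}}$ satisfies two properties uniformly in $i$. It is uniformly elliptic, because $e^{-\tau_i}g_\infty\le g_{X,i}\le e^{\tau_i}g_\infty$ and $\chi_i\to\chi>0$ in $C^{1,\beta}$ by Theorem~\ref{t:chiconv}. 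It is also uniformly continuous: $g_{X,i}\to g_\infty$ in $W^{1,p}$ by Lemma~\ref{l.convergence invariant metrics on base}, and since $p>d$ Morrey gives a uniform $C^{0,\gamma}$ bound. Likewise, the norms of $W^{k,p}(X,\mu_i)$ and $W^{-1,p}(X,\mu_i)$ are uniformly equivalent to the fixed norms on $(X,g_\infty)$. The standard $L^p$ theory for divergence-form operators with continuous coefficients (freeze coefficients on balls of radius $r$, use the constant-coefficient $W^{1,p}$ estimate, absorb the oscillation error, and patch with a partition of unity) then yields
\begin{align}
\Vert h\Vert_{W^{1,p}} \le C\big(\Vert \Delta_{\mu_i}h\Vert_{W^{-1,p}}+\Vert h\Vert_{L^p}\big).
\end{align}
Here $C$ depends only on the uniform ellipticity and modulus of continuity, so it is independent of $i$. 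The cutoff commutators contribute terms of the form $\nabla\eta\, h$ in $W^{-1,p}$ and $\nabla\eta\cdot\nabla h$; the latter is bounded in $W^{-1,p}$ by $\Vert h\Vert_{L^p}$ after integrating by parts. I would then replace $\Vert h\Vert_{L^p}$ by $\Vert h\Vert_{W^{-1,p}}$ via interpolation, $\Vert h\Vert_{L^p}\le \epsilon\Vert h\Vert_{W^{1,p}}+C_\epsilon\Vert h\Vert_{W^{-1,p}}$, which is valid uniformly by the uniform norm equivalence, and absorb the $\epsilon$ term.

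Next I would handle the zeroth-order term $(v\frac{4}{n-2}-\Theta_i)h$. The constant part is harmless, since $\Vert v h\Vert_{W^{-1,p}}\le C\Vert h\Vert_{W^{-1,p}}$. The main obstacle is $\Theta_i h$, because $\Theta_i$ is only small in $W^{-1,p}$. Proposition~\ref{1quadest}(i) gives $\Vert\Theta_i\Vert_{W^{-1,p}}\le C\tau_i$. For $\psi\in W^{1,q}$ and $h\in W^{1,p}$ with $p>d$, the product $h\psi$ lies in $W^{1,q}$ with $\Vert h\psi\Vert_{W^{1,q}}\le C\Vert h\Vert_{W^{1,p}}\Vert\psi\Vert_{W^{1,q}}$. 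This uses $h\in C^0$ together with Hölder and Sobolev embedding on $\nabla h\,\psi$; if that product estimate fails for some exponent range, I would apply Proposition~\ref{1quadest}(i) with a larger $p$. This gives
\begin{align}
\Vert\Theta_i h\Vert_{W^{-1,p}}\le C\tau_i\Vert h\Vert_{W^{1,p}}.
\end{align}
For $i\gg1$ this term is absorbed into the left-hand side, and \eqref{1wse} follows.
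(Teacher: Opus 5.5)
Your proposal is correct and follows the same route as the paper. The paper simply invokes the standard Calder\'on--Zygmund estimate for divergence-form operators, with constants made uniform in $i$ by the convergence $g_{X,i}\to g_\infty$ and $\chi_i\to\chi$. Your additional steps make explicit what the paper's one-line proof leaves implicit: the Ehrling-type interpolation replacing $\Vert h\Vert_{L^p}$ by $\Vert h\Vert_{W^{-1,p}}$, and the absorption of $\Theta_i h$ through $\Vert\Theta_i h\Vert_{W^{-1,p}}\le C\tau_i\Vert h\Vert_{W^{1,p}}$, which uses the multiplier estimate $W^{1,p}\cdot W^{1,q}\subset W^{1,q}$ for $q=p/(p-1)$ and $p>d$. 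The absorption is genuinely needed, since $\Theta_i$ is only small in $W^{-1,p}$ and is not a bounded coefficient, so a standard Calder\'on--Zygmund estimate alone does not cover it. For $d<p\le n$, where Proposition~\ref{1quadest}(i) is not stated, the bound on $\Theta_i$ follows from $\Vert\Theta_i\Vert_{W^{-1,p}}\le C\Vert\Theta_i\Vert_{W^{-1,p_0}}$ for any $p_0>n$.
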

\begin{proof}
From Lemma~\ref{l.convergence invariant metrics on base}), we know that $g_{X,i}$ converges to $g_\infty$ on $X$ in the $C^{0,\alpha}$ or $W^{1,p}$ sense. Since they are non-collapsing metrics on $X$, it is enough to apply standard Calder\'on-Zygmund estimate for divergence-form operators, and we obtain \eqref{1wse} with $C$ independent of $i$, for $i$ sufficiently large.
\end{proof}

To apply the inverse function theorem, we need a bounded inverse theorem for the linear operator $\hat{L}_g$. In the case $v \neq 0$ we have the following.
\begin{proposition}
\label{p:vneq0}
   Let $\{(M_i,g_i)\}$ satisfies Assumption~\ref{assumption: collapsing CSC} with limiting manifold $(X,g_\infty)$. If $v \neq 0$, then for $i$ sufficiently large the map $\hat{L}_{\hat{g}_i} : W^{1,p}(X,\mu_i) \rightarrow W^{-1,p}(X,\mu_i)$ is uniformly injective for $p>n$: i.e., there is a constant $\delta_0 > 0$
which is independent of $i$ such that
\begin{align} \label{1sbelow}
\| \hat{L}_{\hat{g}_i} h \|_{W^{-1,p}(X,\mu_i)} \geq \delta_0 \big( \| h \|_{W^{1,p}(X,\mu_i)} \big),
\end{align}
for all $h \in W^{1,p}(X,\mu_i)$, with $p>n$.
\end{proposition}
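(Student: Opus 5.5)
The plan is a contradiction argument combined with the uniform Calder\'on--Zygmund estimate of Proposition~\ref{p:elliptic estimate}. We first reduce to the case without the error term. Since $\Theta_i = v - S(\hat{g}_i)$ satisfies $\Vert \Theta_i\Vert_{W^{-1,p}(X,\mu_i)}\leq C\tau_i$ by Corollary~\ref{c:weak sclar curvature base estimate}, multiplication by $\Theta_i$ is small as an operator $W^{1,p}\to W^{-1,p}$. Indeed, for $p>d$ we have $\Vert h\phi\Vert_{W^{1,q}}\leq C\Vert h\Vert_{W^{1,p}}\Vert\phi\Vert_{W^{1,q}}$ by Sobolev embedding $W^{1,p}\subset C^0$, so $\Vert \Theta_i h\Vert_{W^{-1,p}}\leq C\tau_i\Vert h\Vert_{W^{1,p}}$. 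It therefore suffices to prove the lower bound \eqref{1sbelow} for
\begin{align*}
L_i' := -4\tfrac{n-1}{n-2}\Delta_{\mu_i} - \tfrac{4v}{n-2}.
\end{align*}
The perturbation $C\tau_i$ can then be absorbed into $\delta_0/2$ for $i$ large.

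Suppose the bound fails for $L_i'$. Then, after passing to a subsequence, there are $h_i$ with $\Vert h_i\Vert_{W^{1,p}(X,\mu_i)}=1$ and $\Vert L_i' h_i\Vert_{W^{-1,p}(X,\mu_i)}\to 0$. We use the following facts:
\begin{itemize}
\item By Lemma~\ref{l.convergence invariant metrics on base} and Theorem~\ref{t:chiconv}, $g_{X,i}\to g_\infty$ in $C^{0,\alpha}\cap W^{1,p}$ and $\chi_i\to\chi$ in $C^{1,\beta}$, with $\chi$ bounded above and below.
\item Consequently the norms of $W^{k,p}(X,\mu_i)$ are uniformly equivalent to those of $W^{k,p}(X,g_\infty)$.
\item By Rellich compactness, a subsequence satisfies $h_i\rightharpoonup h$ weakly in $W^{1,p}$ and $h_i\to h$ strongly in $C^0$, hence in $W^{-1,p}$.
\end{itemize}
For a fixed test function $\phi\in W^{1,q}$, the pairing is
\begin{align*}
\langle L_i'h_i,\phi\rangle=\int \Big(4\tfrac{n-1}{n-2}\langle\nabla h_i,\nabla\phi\rangle_{g_{X,i}} - \tfrac{4v}{n-2}h_i\phi\Big)\chi_i\, dvol(g_{X,i}).
\end{align*}
The coefficients converge uniformly, so we can pass to the limit. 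This shows that $h$ is a weak $W^{1,p}$ solution of $-\Delta_\mu h = \frac{v}{n-1}h$ on $(X,g_\infty)$. Elliptic regularity, using $\chi\in C^{1,\alpha}$ and $g_\infty$ smooth, upgrades $h$ to $W^{2,p}$.

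If $v<0$, integrating the equation against $h$ with respect to $d\mu$ gives $\int|\nabla h|^2d\mu = \frac{v}{n-1}\int h^2 d\mu\leq 0$, which forces $h=0$. If $v>0$, the eigenvalue hypothesis in Assumption~\ref{assumption: collapsing CSC} gives $h=0$. In either case $h_i\to 0$ in $W^{-1,p}$.

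Applying Proposition~\ref{p:elliptic estimate} to $L_i'$, whose constant is the same up to the lower order modification, gives
\begin{align*}
1=\Vert h_i\Vert_{W^{1,p}}\leq C\big(\Vert L_i'h_i\Vert_{W^{-1,p}}+\Vert h_i\Vert_{W^{-1,p}}\big)\to 0,
\end{align*}
which is a contradiction.

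The main obstacle is the passage to the limit in the weak formulation with varying metrics $g_{X,i}$ and measures $\mu_i$. Here we must check that the uniform $C^0$ convergence of $g_{X,i}$ and $\chi_i$ suffices, which it does for the divergence-form pairing. A second point to verify is the strong $W^{-1,p}$ convergence $h_i\to 0$ uniformly across the changing norms. Uniform equivalence of the norms handles this. Note also that $v=0$ is excluded: there constants lie in the kernel, which is why the statement assumes $v\neq 0$.
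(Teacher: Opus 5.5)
Your proposal is correct and follows essentially the paper's argument: a contradiction--compactness argument that yields in the limit a weak solution of $-\Delta_\mu h=\frac{v}{n-1}h$, which is then ruled out, followed by the uniform Calder\'on--Zygmund estimate to contradict $\Vert h_i\Vert_{W^{1,p}}=1$. The differences are organizational. You remove the $\Theta_i$ term in advance as an $O(\tau_i)$ perturbation, using the same $W^{-1,p}$ bound together with a product estimate whose $\phi\nabla h$ term also needs $W^{1,q}\subset L^{p/(p-2)}$, which holds since $p>d$; the paper instead lets this term vanish in the limit. You also handle $v<0$ explicitly through the nonnegativity of $-\Delta_\mu$, which the paper leaves implicit.
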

\begin{proof}
    We argue via contradiction: if \eqref{1sbelow} does not hold, there is a subsequence $h_i\in W^{1,p}(X,\mu_i)$ with
    \begin{align} 
\label{1assumption1} 
 \| h_i \|_{W^{1,p}(X,\mu_i)} &= 1 \ \ \forall i, \\
\label{1assumption2}
\epsilon_i \equiv \hat{L}_{\hat{g}_i} h_i  & \rightarrow 0 \ \ \mbox{in } W^{-1,p}(X,\mu_i).
 \end{align}
Since $p>n>d$, there exists $h_\infty\in C^{0,\alpha}(X)$, where $\alpha=1-\frac{d}{p}$, so that $h_i\to h_\infty$ in $C^{0,\beta}$ sense, for any $\beta <\alpha$. 
Consider $p^*$ so that $\frac{1}{p}+\frac{1}{p^*}=1$. 
Since $d\mu_i$ converges to $d\mu$ in weak* topology (cf. Section~\ref{sec:convergence_eigenvalue}). Then, for any vector field $V$ with bounded $L^{p^*}(X,g_\infty)$ norm, then it has bounded $L^{p^*}(X,\mu_i)$ for all sufficiently large $i$. Thus, 
\begin{align}
    \int_X \langle\nabla h_i,V\rangle\cdot d\mu_i\to \int_X \langle\nabla h_\infty, V\rangle\cdot d\mu.
\end{align}
Noting that for any $W^{1,p^*}(X, \mu)$ function $\psi$, then $\psi \in W^{1,p^*}(X,\mu_i)$, for sufficiently large $i$, then since $\Delta_{\mu_i}$ is self-adjoint with respect to $d\mu_i$, we have
\begin{align}
    \langle \hat{L}_{\hat{g}_i}h_i, \psi\rangle = \int_X \Big( 4\frac{n-1}{n-2}\langle \nabla h_i,\nabla \psi \rangle - \frac{4}{n-2}(v\cdot \psi)h_i \Big) d\mu_i + 
\int_X \Theta_i(h_i\cdot\psi) d\mu_i = \epsilon_i(\psi). 
\end{align}
Therefore, when $i \to \infty$, by Corollary~\ref{c:weak sclar curvature base estimate}, we have a weak solution
\begin{align}
    \int_X \Big( \langle \nabla h_\infty,\nabla \psi \rangle - \frac{v}{n-1} h_\infty \psi \Big) d\mu = 0.
\end{align}
By elliptic regularity $h_{\infty} \in W^{2,p}(X,\mu)$ and is a strong solution of \eqref{e:e}, so by Assumption~\ref{assumption: collapsing CSC}, we have $h_\infty\equiv 0$. By the elliptic estimate from Proposition~\ref{p:elliptic estimate}, $h_i \to h_{\infty}$ strongly in $W^{1,p}$, which contradicts \eqref{1assumption1}.
\end{proof}

In the case $v = 0$, we need to consider the space $W_0^{1,p}(X,\mu_i)\subseteq W^{1,p}(X,\mu_i)$ defined as
\begin{align}
    \int_X \phi d\mu_i = 0
\end{align}
for $\phi \in W^{1,p}_0(X,\mu_i)$.  We also define 
\begin{align}
W^{-1,p}_0 (X, \mu_i) \equiv \{ f \in W^{-1,p}(X, \mu_i) \ | \  f(1) = 0 \}.
\end{align}
We define the nonlinear functional:
\begin{align}
    v_i = v_i(\phi) = \frac{\int_X S(\hat{g}_i) (1+\phi)d\mu_i}{\int_X (1+\phi)^{\frac{n+2}{n-2}}d\mu_i}.
\end{align}
Thus
\begin{align}
    v_i(0) = \int_X S(\hat{g}_i) d\mu_i
\end{align}
Then define 
\begin{align}P_{0,\hat{g}_i}: W_0^{1,p}(X,\mu) \rightarrow  W^{-1,p}_0 (X, \mu)
\end{align}
by 
\begin{align}
         \phi\mapsto  - 4 \frac{n-1}{n-2} \Delta_{\hat{g}_i} \phi + S(\hat{g}_{i}) \cdot (1 + \phi)    -  v_i(\phi)\cdot (1 + \phi)^{ \frac{n+2}{n-2}},
\end{align}
The linearized operator can be computed as
\begin{align}
    \hat{L}_{0,\hat{g}_i}h = - 4 \frac{n-1}{n-2} \Delta_{\mu_i}h  + \Big(\Theta_i- \frac{n+2}{n-2} v_i(0)\Big)h-  \int_X \Big(\Theta_i - \frac{n+2}{n-2} v_i(0) \Big)h d\mu_i'.
\end{align}
We also have
\begin{align}
    Q_{0,\hat{g}_i}(h) = P_{0,\hat{g}_i}(h) - P_{0,\hat{g}_i}(0) -\hat{L}_{0,\hat{g}_i}h,
\end{align}
and it can be computed as
\begin{align}
    Q_{0,\hat{g}_i}(h) = -v_i(h)\cdot (1+h)^{\frac{n+2}{n-2}} + \big(\frac{n+2}{n-2}\big)v_i(0)\cdot h + \int_X \Big(\Theta_i - \big(\frac{n+2}{n-2}\big)v_i(0)\Big)h d\mu_i + v_i(0).
\end{align}
The analog of Proposition~\ref{1quadest} holds for $P_{0,\hat{g}_i}$, we do not restate it here. Moreover, we have the following.
\begin{proposition}
\label{p:veq0}
   Assume that $\{(M_i,g_i)\}$ satisfies Assumption~\ref{assumption: collapsing CSC} with limiting manifold $(X,g_\infty)$ with $v = 0$. Then for $i$ sufficiently large the map $\hat{L}_{0,\hat{g}_i} : W^{1,p}_0(X,\mu_i) \rightarrow W^{-1,p}_0(X,\mu_i)$ is uniformly injective for $p>n$: i.e., there is a constant $\delta_0 > 0$
which is independent of $i$ such that
\begin{align} \label{1sbelow0}
\| \hat{L}_{0,\hat{g}_i} h \|_{W^{-1,p}(X,\mu_i)} \geq \delta_0 \big( \| h \|_{W^{1,p}(X,\mu_i)} \big),
\end{align}
for all $h \in W^{1,p}_0(X,\mu_i)$, with $p>n$.
In particular, we have $\Ker(\hat{L}_{\hat{g}_i}) \simeq \Coker ( \hat{L}_{\hat{g}_i}) \simeq \RR$ is one-dimensional.
\end{proposition}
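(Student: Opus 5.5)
We argue by contradiction, following the proof of Proposition~\ref{p:vneq0}. The new ingredient is that the zeroth-order terms of $\hat{L}_{0,\hat{g}_i}$ tend to zero in $W^{-1,p}$. Indeed, since $v = 0$, Corollary~\ref{c:weak sclar curvature base estimate} gives $\Vert \Theta_i\Vert_{W^{-1,p}(X,\mu_i)} \leq C\tau_i$. Pairing with the constant function $1$ gives $|v_i(0)| = |\int_X S(\hat{g}_i)\, d\mu_i| \leq C\tau_i$. The operator is therefore a small perturbation of $-4\frac{n-1}{n-2}\Delta_{\mu_i}$ acting on mean-zero functions. We first check that this small part is controlled: for $h \in W^{1,p}$ with $p > d$, the product $h\psi$ lies in $W^{1,p^*}$ with norm at most $C\Vert h\Vert_{W^{1,p}}\Vert \psi\Vert_{W^{1,p^*}}$, using the Sobolev embedding into $C^0$. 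Hence
\begin{align}
\Big\Vert \Big(\Theta_i - \tfrac{n+2}{n-2}v_i(0)\Big)h \Big\Vert_{W^{-1,p}} + \Big|\int_X \Big(\Theta_i - \tfrac{n+2}{n-2}v_i(0)\Big) h\, d\mu_i\Big| \leq C\tau_i \Vert h\Vert_{W^{1,p}}.
\end{align}

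Now suppose \eqref{1sbelow0} fails. Then there are $h_i \in W^{1,p}_0(X,\mu_i)$ with $\Vert h_i\Vert_{W^{1,p}(X,\mu_i)} = 1$ and $\hat{L}_{0,\hat{g}_i}h_i \to 0$ in $W^{-1,p}$. By the estimate above, $\Delta_{\mu_i} h_i \to 0$ in $W^{-1,p}(X,\mu_i)$. Since $p > d$, a subsequence converges, $h_i \to h_\infty$ in $C^{0,\beta}$, and $\nabla h_i \rightharpoonup \nabla h_\infty$ weakly. Because $\chi_i \to \chi$ in $C^{1,\beta}$ (Theorem~\ref{t:chiconv}) and $g_{X,i} \to g_\infty$ (Lemma~\ref{l.convergence invariant metrics on base}), the mean-zero condition passes to the limit: $\int_X h_\infty\, d\mu = 0$. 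For every test function $\psi$ we also obtain $\int_X \langle \nabla h_\infty, \nabla\psi\rangle\, d\mu = 0$. Taking $\psi = h_\infty$ shows $h_\infty$ is constant, and the mean-zero condition then gives $h_\infty = 0$.

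To reach a contradiction we upgrade to strong convergence using Proposition~\ref{p:elliptic estimate}. Applied to $\hat{L}_{\hat{g}_i}$, whose zeroth-order part is also $O(\tau_i)$ in the multiplier sense, it gives
\begin{align}
\Vert h_i\Vert_{W^{1,p}} \leq C\big(\Vert \Delta_{\mu_i}h_i\Vert_{W^{-1,p}} + C\tau_i + \Vert h_i\Vert_{W^{-1,p}}\big).
\end{align}
The first two terms tend to $0$, and $\Vert h_i\Vert_{W^{-1,p}} \leq C\Vert h_i\Vert_{C^0} \to 0$ since $h_i \to 0$ uniformly. This contradicts $\Vert h_i\Vert_{W^{1,p}} = 1$. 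The main technical point is the uniformity of the constants in $i$. This holds because the measures $\mu_i$ are uniformly comparable to $\mathrm{dvol}(g_\infty)$, since $\chi_i$ is bounded above and below, so the Sobolev and Calder\'on--Zygmund constants do not depend on $i$.

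For the last assertion, consider $\hat{L}_{\hat{g}_i}$ on all of $W^{1,p}$ with $v = 0$. It is the operator $-4\frac{n-1}{n-2}\Delta_{\mu_i} + \Theta_i$. Being self-adjoint with respect to $\mu_i$ and Fredholm of index zero by Proposition~\ref{p:elliptic estimate}, its kernel and cokernel have the same dimension. We read the intended statement as concerning the model operator $-\Delta_{\mu_i}$, whose kernel is the constants, since $\int |\nabla h|^2\, d\mu_i = 0$ forces $h$ constant. Its cokernel is spanned by the functional $f \mapsto f(1)$, which is exactly the condition defining $W^{-1,p}_0$. Since $\Theta_i$ is only a small perturbation, the splitting $W^{1,p} = W^{1,p}_0 \oplus \mathbb{R}$, together with the injectivity just proved on the mean-zero part, shows that the kernel and cokernel are each at most one-dimensional. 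They are exactly one-dimensional, and so isomorphic to $\RR$, once the constants are accounted for by the parameter $v_i$.
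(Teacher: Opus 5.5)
Your argument for the uniform lower bound \eqref{1sbelow0} is correct and is essentially the paper's. Both run a contradiction argument with $C^{0,\beta}$-compactness of the normalized $h_i$. Both pass the mean-zero condition and the weak equation to the limit, conclude that the mean-zero $\Delta_\mu$-harmonic limit vanishes, and use the Calder\'on--Zygmund estimate of Proposition~\ref{p:elliptic estimate} to force strong convergence.

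You make explicit something the paper leaves implicit: the multiplier bound showing that all zeroth-order terms of $\hat{L}_{0,\hat{g}_i}$, including $v_i(0)$ (obtained by pairing $\Theta_i$ with $1$), are $O(\tau_i)$ as maps $W^{1,p}\to W^{-1,p}$. You also test directly with $\psi=h_\infty$ instead of going through elliptic regularity and \eqref{e:ibp}.

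The only weak point is your closing sentence on the ``in particular'' clause, which the paper does not prove either. For $\hat{L}_{\hat{g}_i}$ itself, your reasoning only shows that kernel and cokernel have dimension at most one. In general both are trivial: the lowest eigenvalue of this $O(\tau_i)$-perturbation of $-4\frac{n-1}{n-2}\Delta_{\mu_i}$ is $\pm v_i(0)/\mu_i(X)+O(\tau_i^2)$, which need not vanish. So the clause is correct only in your reinterpreted form, for $-\Delta_{\mu_i}$ (or the limit $-\Delta_\mu$), whose kernel is the constants and whose range is exactly $W^{-1,p}_0(X,\mu_i)$. The remark about the constants being ``accounted for by the parameter $v_i$'' does not establish anything and should be dropped.
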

\begin{proof} The proof is almost identical to the proof of Proposition~\ref{p:vneq0}. The only difference is to show that the limit $h_{\infty}$ of the $h_i$ in the contradiction argument has mean value zero. This can be easily seen from the fact that $\Vert h_i\Vert_{L^\infty}$ is uniformly bounded (by Sobolev embedding), the fact that $\chi_i$ converges to $\chi_\infty$ in $C^{1,\alpha}$ and the dominated convergence theorem. Thus, the limit $h_\infty$ is in $W^{1,p}_0(X,\mu)$. Similar to the proof of Proposition~\ref{p:vneq0}, we have $\Delta_{\mu} h_\infty = 0$ weakly, and then strongly by elliptic regularity. From \eqref{e:ibp}, we conclude that $h_{\infty}$ is constant, and therefore $h_{\infty} \equiv 0$.
By the elliptic estimate from Proposition~\ref{p:elliptic estimate}, $h_i \to h_{\infty}$ strongly in $W^{1,p}$, which is a contradiction.
\end{proof}

We next quote without proof the following standard implicit
function theorem:
\begin{lemma}
\label{ift}
Let $H : E \rightarrow F$ be a smooth map between Banach spaces.
Define $Q = H - H(0) - H'(0)$. Assume that there are positive constants
$C_1, s_0, C_2$ so that the following are satisfied:
\begin{itemize}
\item $(1)$ The nonlinear term $Q$ satisfies
\begin{align}
\label{iquad}
\Vert Q(x) - Q(y) \Vert_F \leq C_1 (\Vert x \Vert_E + \Vert y \Vert_E)
\Vert x - y \Vert_E
\end{align}
for every $x, y \in B_E (0, s_0)$.
\item $(2)$ The linearized operator at $0$, $H'(0) : E \rightarrow F$
is an isomorphism with inverse bounded by $C_2$.
\end{itemize}
If
\begin{align}
s < \min \Big( s_0, \frac{1}{2 C_1 C_2} \Big)
\end{align}
and
\begin{align}
\Vert H(0) \Vert_F < \frac{ s}{2 C_2},
\end{align}
Then there is a unique solution $x \in B_E(0,s)$ of the
equation $H(x) = 0$.
\end{lemma}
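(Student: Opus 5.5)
The implicit function theorem in Lemma~\ref{ift} is quoted as standard, so the plan is the usual contraction mapping argument. Write $L = H'(0)$, which by hypothesis (2) has inverse $L^{-1}: F \to E$ with $\Vert L^{-1}\Vert \leq C_2$. Since $H(x) = H(0) + Lx + Q(x)$, the equation $H(x)=0$ is equivalent to the fixed point problem
\begin{align}
x = T(x) := -L^{-1}\big( H(0) + Q(x) \big).
\end{align}
I will show that $T$ maps the closed ball $\bar B_E(0,s)$ into itself and is a contraction there. The Banach fixed point theorem then gives existence and uniqueness in $\bar B_E(0,s)$, hence uniqueness in the open ball. For existence in the open ball, I will check that the fixed point in fact has norm strictly less than $s$.

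\textbf{Step 1 (contraction).} Take $x,y \in \bar B_E(0,s)$. Since $s < s_0$, hypothesis (1) applies (if (1) holds only on the open ball $B_E(0,s_0)$, the closed ball of radius $s<s_0$ still lies inside it). This gives
\begin{align}
\Vert T(x)-T(y)\Vert_E \leq C_2 \Vert Q(x)-Q(y)\Vert_F \leq C_2 C_1 (2s)\Vert x-y\Vert_E .
\end{align}
The assumption $s < \frac{1}{2C_1C_2}$ makes the Lipschitz constant $2C_1C_2 s$ strictly less than $1$.

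\textbf{Step 2 (self-map).} By definition $Q(0)=0$. Putting $y=0$ in (1) gives $\Vert Q(x)\Vert_F \leq C_1 \Vert x\Vert_E^2 \leq C_1 s^2$. Therefore
\begin{align}
\Vert T(x)\Vert_E \leq C_2\Vert H(0)\Vert_F + C_2C_1 s^2 < \frac{s}{2} + \frac{s}{2}\cdot(2C_1C_2 s) < \frac{s}{2}+\frac{s}{2} = s .
\end{align}
This inequality is strict, so $T$ sends $\bar B_E(0,s)$ into the open ball $B_E(0,s)$.

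\textbf{Step 3 (conclusion).} $\bar B_E(0,s)$ is a complete metric space, so $T$ has a unique fixed point there. By Step 2 this fixed point lies in $B_E(0,s)$, and it solves $H(x)=0$. Conversely, any solution of $H(x)=0$ in $B_E(0,s)$ is a fixed point of $T$, so it coincides with the one already found.

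There is no real obstacle here. The only care needed is the bookkeeping that the strict inequalities $s<\frac{1}{2C_1C_2}$ and $\Vert H(0)\Vert_F < \frac{s}{2C_2}$ yield both the contraction and the strict self-map, and that $\bar B_E(0,s)$ lies in the region where (1) holds.
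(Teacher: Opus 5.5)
Your contraction-mapping argument is correct and complete, including the strict-inequality bookkeeping that places the unique fixed point of $T(x) = -H'(0)^{-1}\bigl(H(0)+Q(x)\bigr)$ on $\bar B_E(0,s)$ inside the open ball $B_E(0,s)$. The paper quotes this lemma as standard without giving a proof, and your argument is exactly the standard one it relies on.
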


We end this section with the following existence theorem:
\begin{theorem}\label{t.solution with estimate} Assume that $\{(M_i,g_i)\}$ satisfies Assumption~\ref{assumption: collapsing CSC} with limiting manifold $(X,g_\infty)$ with $v = 0$. For all $i \gg 0$,
there exists $h_i \in C^{\infty}(X)$ satisfying
\begin{align}
\label{thetanorm}
\Vert h_i \Vert_{W^{1,p}(X,\mu_i)} < C \tau_i,
\end{align}
where $p > n$ and $\tau_i \to 0$ as $i \to \infty$, solving
\begin{align}
\label{bigeqn}
P_{\hat{g}_i}(h_i) = 0.
\end{align}
if $v \neq 0$ and 
\begin{align}
\label{bigeqn0}
P_{0,\hat{g}_i}(h_i) = 0
\end{align}
if $v = 0$
\end{theorem}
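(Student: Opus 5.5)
The plan is to apply the implicit function theorem, Lemma~\ref{ift}, with $E = W^{1,p}(X,\mu_i)$ and $F = W^{-1,p}(X,\mu_i)$ and $H = P_{\hat{g}_i}$ when $v \neq 0$. When $v=0$ we take instead $E = W^{1,p}_0(X,\mu_i)$, $F = W^{-1,p}_0(X,\mu_i)$ and $H = P_{0,\hat{g}_i}$. The point is to check the hypotheses of Lemma~\ref{ift} with constants $C_1, C_2, s_0$ \emph{independent of $i$}, while $\Vert H(0)\Vert_F \le C\tau_i \to 0$.

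\textbf{Step 1: the nonlinear term.} Proposition~\ref{1quadest}(iii), and its analog for $P_{0,\hat{g}_i}$, gives the estimate \eqref{iquad} with a constant $C_1 = C_2(s_0)$. This constant is uniform in $i$, because the Sobolev embedding $W^{1,p}(X,\mu_i)\hookrightarrow C^0$ (with $p>n>d$) has a uniform constant. That uniformity holds since $g_{X,i}\to g_\infty$ (Lemma~\ref{l.convergence invariant metrics on base}) and $\chi_i\to\chi>0$ in $C^{1,\beta}$ (Theorem~\ref{t:chiconv}).

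In the case $v=0$, the only extra point is the functional $v_i(\phi)$. It is a quotient of smooth functionals of $\phi\in C^0$ whose denominator is close to $1$. Its numerator pairs $S(\hat{g}_i)$ against $1+\phi$, which makes sense since $S(\hat{g}_i)\in W^{-1,p}$ and $1+\phi\in W^{1,p}\subset W^{1,p^*}$. So a Taylor expansion again gives a quadratic Lipschitz bound. Here I would use $|v_i(0)| = |\langle \Theta_i,1\rangle| \le C\tau_i$ to absorb the terms linear in $h$.

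\textbf{Step 2: uniform invertibility of the linearization.} By Proposition~\ref{1quadest}(ii), the linearization is $\hat{L}_{\hat{g}_i}$ (respectively $\hat{L}_{0,\hat{g}_i}$). Propositions~\ref{p:vneq0} and \ref{p:veq0} give the uniform lower bound $\Vert \hat L h\Vert_F\ge \delta_0\Vert h\Vert_E$. This yields injectivity and closed range, with inverse bounded by $C_2 = \delta_0^{-1}$ on the range.

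For surjectivity I would use that $\hat L$ is a divergence-form operator which is formally self-adjoint with respect to $d\mu_i$, as in \eqref{e:ibp}, perturbed by the zeroth order term $\Theta_i$. The term $\Theta_i$ is a compact perturbation $W^{1,p}\to W^{-1,p}$ since $W^{1,p}\hookrightarrow C^0$ compactly. So $\hat L$ is Fredholm of index zero, and an injective index-zero operator is an isomorphism. In the $v=0$ case one restricts to the mean-zero spaces, and the projection term in $\hat L_{0,\hat{g}_i}$ ensures the operator maps $E$ into $F$.

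I expect this step to be the main obstacle. The index-zero claim should be made carefully, for example by deforming $\Theta_i$ to $0$ through a continuous family of Fredholm operators, and by checking that the $W^{-1,p}$/$W^{1,p^*}$ duality used in the uniform estimates matches.

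\textbf{Step 3: conclusion and regularity.} Proposition~\ref{1quadest}(i) gives $\Vert H(0)\Vert_F = \Vert\Theta_i\Vert_F\le C\tau_i$; for $v=0$ the projection to $F$ does not increase the norm by more than a constant. Choose $s = 4C_2C\tau_i$, which lies below $\min(s_0,1/(2C_1C_2))$ for $i$ large. Lemma~\ref{ift} then yields a unique $h_i\in B_E(0,s)$ with $H(h_i)=0$, hence $\Vert h_i\Vert_{W^{1,p}}<C\tau_i$.

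For the smoothness claim $h_i\in C^\infty(X)$, I would note that $\hat{g}_i$, $g_{X,i}$ and $\chi_i$ are smooth for each fixed $i$, although not uniformly so. Then $h_i$ is a weak solution of a semilinear elliptic equation with smooth coefficients and $C^0$ nonlinearity, and standard bootstrapping for that fixed $i$ gives $h_i\in C^\infty$.
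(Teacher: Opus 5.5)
Your proposal is correct and follows the paper's own argument. You apply Lemma~\ref{ift} on $W^{1,p}(X,\mu_i)\to W^{-1,p}(X,\mu_i)$ (or on the mean-zero spaces when $v=0$), with Proposition~\ref{1quadest} supplying $\Vert H(0)\Vert\le C\tau_i$ and the quadratic estimate, Propositions~\ref{p:vneq0} and~\ref{p:veq0} supplying the $i$-independent bound on the inverse, and elliptic regularity giving smoothness. You go beyond the paper in one place, surjectivity of the linearization, which the paper uses tacitly since its propositions only give uniform injectivity. Your Fredholm index-zero argument fills this in correctly: for each fixed $i$, $\Theta_i$ is a genuine smooth function, so multiplication by it is a compact perturbation of the isomorphism $-\Delta_{\mu_i}+1 : W^{1,p}\to W^{-1,p}$, even though only $\Vert\Theta_i\Vert_{W^{-1,p}}$ is controlled uniformly in $i$.
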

\begin{proof} This follows from Lemma~\ref{ift}, using Proposition~\ref{1quadest} and Proposition~\ref{p:vneq0} in the case $v \neq 0$,
and Proposition~\ref{p:veq0} in the case $v = 0$. Smoothness of $h_i$ follows from standard elliptic regularity. 
\end{proof}
To finish the proof of Theorem~\ref{cfgconj}, we show that  the sequence $(M_i,g_i'= (1+h_i)^{\frac{4}{n-2}} \hat{g}_i)$ satisfies
all of the conditions stated there.

\begin{proof}[Proof of Theorem~\ref{cfgconj}]
    By the Assumption~\ref{assumption: collapsing CSC}, there exists an $\mathcal{N}$-invariant nearby metric $\hat{g}_i$ from Theorem~\ref{t.invariant_nearby_metric_estimate}. View $\hat{g}_i$ as the background metric, by Theorem~\ref{t.solution with estimate}, we can find a $\mathcal{N}$-invariant perturbation $h_i$ so that $g_i':=(1+h_i)^{\frac{4}{n-2}}\hat{g}_i$ is an $\mathcal{N}$-invariant metric satisfying the following:
    \begin{enumerate}
        \item[(1)] In the case when $v<0$, there is no eigenvalue assumption. By Theorem~\ref{t.solution with estimate}, then $S(g_i')\equiv v$;
        \item[(2)] In the case when $v = 0$, there is no eigenvalue assumption. By Theorem~\ref{t.solution with estimate}, $S(g'_i) = v_i$, for some constant $v_i \to 0$ as $i\to\infty$;
        \item[(3)] In the case when $v >0$, by the eigenvalue assumption, we have a solution $h_i$ by Theorem~\ref{t.solution with estimate}, and $S(g_i')\equiv v$.
        
    \end{enumerate}
    By the estimate \eqref{thetanorm} and Sobolev embedding, the new sequence $\{(M_i,g_i')\}$ converges to the same Gromov-Hausdorff limit.  
\end{proof}


\appendix

\section{Proof of Theorem~\ref{t.cfg}}

    We will first construct such $C^{2,\alpha}$ fibration map under the assumption, following the idea of the proof of Proposition 6.6 of \cite{NaberZhang}.

    \textbf{Step 1}: the existence of local fibration map.

    We first fix a scale $\epsilon >0$ and consider the rescaled metric $g'_i := \epsilon^{-2}g_i$ converge to $g'_\infty = \epsilon^{-2}g_\infty$. By the assumption, for any sequence $\{x_i\}$ with $x_i\in M_i^n$ and $x_i\to x\in X^d$ and the ball $(B_4(x_i),g'_i) \to (B_4(x),g'_\infty)$ in the Gromov-Hausdorff sense. We can choose the scaling parameter $\epsilon$ so small such that
    \begin{enumerate}
        \item[(1)] The Gromov-Hausdorff distance $(B_4(x),g'_\infty)$ with the Euclidean ball $B_4(0^d)$ is less than $\epsilon$;
        \item[(2)] The Ricci curvature 
        $\operatorname{Ric}(g_i)\geq -(n-1)\epsilon$.
        \item [(3)] The harmonic radius of $g_\infty'$ is greater than $3$.
    \end{enumerate}
    Then, we can use the construction of harmonic functions from \cite[Section~6]{CheegerColdingwraped}. For $i$ sufficiently large, there exists a harmonic map
    \begin{align}
        \Phi_{x_i}^i:B_2(x_i)\to \mathbb{R}^d
    \end{align}
    such that $\Phi_{x_i}^i:B_2(x_i)\to B_2(0^d) $ is a $\tau(\epsilon)$-Gromov-Hausdorff approximation with the following gradient estimate
    \begin{align}
        |\nabla \Phi_{x_i}^{i,\alpha}|(p)\leq 1+\tau(\epsilon),
    \end{align}
    and the integral estimate
    \begin{align}
        \frac{1}{\operatorname{vol}(B_2(x_i))}\Big(\sum_{\alpha,\beta = 1}^d\int_{B_2(x_i)} \Big( |\langle\nabla\Phi_{x_i}^{i,\alpha},\nabla\Phi_{x_i}^{i,\beta}\rangle - \delta_{\alpha,\beta}|   + \frac{1}{d} |\operatorname{Hess}\Phi_{x_i}^{i,\alpha}|^2   \Big) dvol(g_i)\Big) < \tau(\epsilon).
\end{align}

    To get pointwise $C^{2,\alpha}$ estimate for $\Phi_{x_i}^i$, consider the pull-back metric $(B_2(0^n),\hat{g}'_i)$, where we define $\hat{g}'_i := \operatorname{exp}_{x_i}^*g'_i$ as the pull-back metric on the local universal cover $B_2(0^n)\subseteq \mathbb{R}^n$. Then, the pull-back function $\hat{\Phi}_{x_i}^i := \Phi_{x_i}^i\circ\operatorname{exp}_{x_i}$ is harmonic. By our curvature assumption, the conjugate radius is bounded from below and thus the injectivity radius of $(B_2(0^n),\hat{g}'_i)$ is bounded from below. Thus, we have the metric, under a suitable coordinate, is uniformly $C^{1,\alpha}$ bounded. Thus, the coefficients of the Laplacian $\Delta_{\hat{g}'_i}$ are uniformly $C^{0,\alpha}$ bounded. Thus, we can apply the standard Schauder estimate and conclude the pointwise $C^{2,\alpha}$ estimate
    \begin{align}\label{e:harmonic_coord_C2alpha}
        \Vert \Phi_{x_i}^{i,\alpha}\Vert_{C^{2,\alpha}(B_1(x_i))} < C(n).
    \end{align}

    To check the nondegeneracy, we need to show that there is no critical point in $B_1(x_i)$. In fact, we want to show the following pointwise estimate:
    \begin{align}
        \sum_{\alpha,\beta = 1}^d|\langle\nabla\Phi_{x_i}^{i,\alpha},\nabla\Phi_{x_i}^{i,\beta}\rangle - \delta_{\alpha,\beta}| < \tau(\epsilon).
    \end{align}
    This can be proved by combining uniform relative volume comparison and a compactness argument.

    On the limit $X$. By our assumption, $X$ is a compact Riemannian manifold. Let $B_3(x_i)$ converge to $B_3(x_\infty)\subseteq X$. Then, there exists harmonic coordinate system
    \begin{align}
        H_{x_\infty} = (u_1, ..u_d):B_3(x_\infty)\to \mathbb{R}^d
    \end{align}
    with the point-wise estimate $\sum_{\alpha,\beta=1}^d |\langle \nabla u_\alpha,\nabla u_\beta\rangle - \delta_{\alpha,\beta}|<\tau(\epsilon)$, and uniform $C^{2,\alpha}$ estimate. Thus, we can define a local  map $F_{x_i} = H_{x_\infty}^{-1}\circ \Phi^{i}_{x_i}: B_1(x_i)\to B_1(x_\infty)$, with point-wise $C^{2,\alpha}$ estimate. By choosing $\epsilon$ small enough, there is no critical point and thus defines a local fiber bundle with fiber of dimension $n-d$.

    \textbf{Step 2}: get a global fibration map via patching.

    Again, we fix the scaling $\epsilon$ as well. Now, we want to define a partition of unity for each $M_i$ when $i$ is large with uniform $C^{2,\alpha}$ estimate. First, we need to construct a set of cut-off functions centered at any $p_i\in M_i$ with the cutting radius 1. In other words, we want to find $\psi_{p_i}$ so that:
    \begin{align}
        \psi_{p_i}(x) = \left\{
        \begin{aligned}
            &1, \quad x\in B_{\frac{1}{2}}(p_i)\\
            &0, \quad x\in M_i\setminus B_1(p_i)
        \end{aligned}
        \right.
    \end{align}
    for every $p_i$ with $\Vert \psi_{p_i}\Vert_{C^{2,\alpha}} \leq C$, which is independent of $i$. This cut-off function is constructed in Theorem 6.33 of \cite{CheegerColdingwraped}. 
    First, we construct the following functions:
    \begin{align}
        \left\{
        \begin{aligned}
            &\Delta k_{p_i} = \kappa, \quad x \in B_1(p_i)\setminus B_{\frac{1}{2}}(p_i)\\
            & k_{p_i} = 1,\quad x\in \partial B_{\frac{1}{2}}(p_i)\\
            & k_{p_i} = 0, \quad x\in \partial B_1(p_i)
        \end{aligned}
        \right. ,
    \end{align}
    for some $\kappa>0$, which only depends on the dimension. Take $\Psi:\mathbb{R}\to [0,1]$ as a smooth cut-off function. Then, we construct $\psi_{p_i} = \Psi\circ k_{p_i}$. It is the desired cut-off function by \cite{CheegerColdingwraped}. For the regularity, by our curvature boundedness assumption, the coefficients of the Laplacian are uniformly $C^{0,\alpha}$ bounded, so by Schauder estimate, we can improve the regularity of such cut-off function to $C^{2,\alpha}$, following the argument of \cite{CheegerColdingwraped}.

    We can construct the partition of unity with uniform $C^{2,\alpha}$ estimate now. We choose a cover $\{B_1(q^k)\}_{k=1}^N$ so that $M_i\subseteq \cup_{k=1}^N B_1(q^k)$ and we define the partition of unity subordinate to this cover by:
    \begin{align}
        \phi_k = \frac{\psi_{q^k}}{\sum_{j=1}^N \psi_{q^j}}.
    \end{align}
    Note that $N$ is uniformly bounded by the assumption of uniformly bounded curvature of the sequence and a lower injectivity radius bound on $X$.

    To patch local fibration maps together, we need to introduce the following function on $X$ to determine the image of the fibration. Define the distance-like function:
    \begin{align}
        \eta_{q_\infty}:B_1(q_\infty)\times B_1(q_\infty)\to \mathbb{R}^+, \quad (x,y)\mapsto \sum_{\alpha = 1}^d |u_\alpha(x)-u_\alpha(y)|^2
    \end{align}
    and the global distance-like function $\eta : M_i\times X \to \mathbb{R}^+$:
    \begin{align}
        \eta(p_i, p_\infty) = \sum_{k = 1}^N \phi_k\eta_{q_\infty^k}(F_{q_k}(p_i),p_\infty).
    \end{align}
    On each local chart $H_{p_\infty^k}$, by the harmonic coordinate estimate, $|\eta_{q_\infty} - d_{g_\infty'}^2|\leq \tau(\epsilon)$. Thus, it can be easily seen that $\eta(p_i,\cdot)$ is a convex function on $X$, hence has a unique minimum, $\rho(p_i)$. Thus, we can define the global map:
    \begin{align}
        F_i : M_i \to X,\quad p_i \mapsto \rho(p_i).
    \end{align}
    Now, we examine the regularity. Note $\eta_{q_\infty}$ is smooth, then $\eta(p_i,\cdot)$ is a smooth function, $C^{2,\alpha}$ in $p_i$ variable.
    Consider $G(p_i,y) = \nabla_y \eta (p_i,y)$. By convexity, $\nabla_y G(p_i,y) > 0$. Thus, by the implicit function theorem, the map $p_i\mapsto \rho(p_i)$ is $C^{2,\alpha}$.
    It is a well-defined map, with $d_{g_\infty'}(F_i(p_i), F_{x_i}(p_i))\leq \tau(\epsilon)$ and 
    \begin{align}
        |\langle\nabla \Phi_{x_i}^{i,\alpha}, \nabla\Phi_{x_i}^{i,\beta}\rangle - \langle\nabla (F_i\circ H_{x_\infty})^{\alpha}, \nabla (F_i\circ H_{x_\infty})^{\beta}\rangle|\leq \tau(\epsilon).
    \end{align}
    Similarly, $F_i$ is a $C^{2,\alpha}$ fibration map with fiber of dimension $n-d$.

    Now, we shall prove the rest of the theorem. To show the $C^{0,\alpha}$ bound for the fiber second fundamental form. Note that we have:
    \begin{align}
        A_{F_i^{-1}(x)}(V,W) = \frac{\operatorname{Hess}F_i(V,W)}{|\nabla F_i|}
    \end{align}
    for $V,W\in\operatorname{ker}(dF_i)$. Thus, we have:
    \begin{align}
        \Vert A_{F_i^{-1}(x)}\Vert_{C^{0,\alpha}} \leq C(n)\Vert F_i\Vert_{C^{2,\alpha}}.
    \end{align}

    To show that $F_i$ is an almost-Riemannian submersion, we can use the contradiction argument given in \cite{SunZhang}. Suppose not, there exists a sequence of vectors $v_i\in T_{x_i} M_i$, perpendicular to the fiber direction, and a constant $\tau_0\geq 0$ so that $|\frac{|dF_i(v_i)|_{g_\infty}}{|v_i|_{g_i}}-1|\geq \tau_0$. We may assume $|v_i|_{g_i} = 1$. Consider the local equivariant Gromov-Hausdorff convergence of local universal cover
    \begin{center}
    \begin{tikzcd}
    & (\tilde{B}_{\tilde{x}_i}(r), \tilde{g}_i, G_i) \arrow[r,  "C^{1,\alpha}"] \arrow[d, "\pi_i"]
    & (\tilde{B}_{\tilde{x}_\infty}(r),\Tilde{g}_\infty, G_\infty) \arrow[d, swap, "\pi_\infty"]\\
    & (B_{x_i}(r), g_i) \arrow[r, "\text{GH}"]
    & (B_{x_\infty}(r), g_\infty)
    \end{tikzcd}
    \end{center}
    where $\pi_i$ is the local universal cover map, $\pi_\infty$ is a Riemannian submersion. Thus, $\tilde{F}_i = F_i\circ \pi_i$ converges to $\pi_\infty$ in $C^{2,\beta}$. Let $\tilde{v}_i$ be a lift of $v_i$. Thus, $\tilde{v}_i$ converges to $\tilde{v}_\infty$ with $|d\pi_\infty \tilde{v}_\infty| = 1$. This violates the assumption $|\frac{|dF_i(v_i)|_{g_\infty}}{|v_i|_{g_i}}-1|\geq \tau_0$.


\bibliographystyle{amsalpha}
\bibliography{Collapsing}

\end{document}